\tikzset{every picture/.style={remember picture}}
\newtheorem{theorem}{Theorem}[section]
 \newtheorem{lemma}[theorem]{Lemma}
 \theoremstyle{definition}
 \theoremstyle{remark}
 \newtheorem{remark}[theorem]{Remark}
 \numberwithin{equation}{section}
\newcommand{\restoreparindent}{\setlength{\parindent}{\normalparindent}}
\DeclareRobustCommand{\stirling}{\genfrac\{\}{0pt}{}}
\definecolor{darkgreen}{rgb}{0,0.8,0}
\begin{document}

\title[Higher dimensional visual proofs]{Higher dimensional visual proofs, Nicomachus' 4D Theorem and the mysterious irreducible factor  $(3n^2+3n-1)$  in the sum of fourth powers}

\author[U. Buijs]{Urtzi Buijs}
\address{Departamento de Algebra, Geometr\'{\i}a y Topolog\'{\i}a, Universidad
de M\'alaga, Ap. 59, 29080 M\'alaga, Spain}
\email{ubuijs@uma.es}

\begin{abstract}
Sums of powers $S_p(n)=\sum_{k=1}^n k^p$ can be described by Faulhaber's formula in terms of the Bernoulli numbers. The first cases of this formula admit visual proofs of various kinds, which lead to factorized Faulhaber polynomials.

In this article we present a technique that yields higher-dimensional visual proofs for these factorized formulas, providing a geometric interpretation of the roots that appear.

In particular, we prove Nicomachus’s Theorem in four dimensions, and we visually explain the appearance, in dimension five, of the irreducible factor $(3n^2 +3n-1)$ in the polynomial ring over the rational numbers.
.\end{abstract}

\maketitle

\section{Introduction}
Mathematicians have long been fascinated by figurate numbers. The Pythagoreans, as early as the 6th century BC, showed a clear interest in classifying numbers according to geometric patterns. In doing so, they introduced triangular, square, and other polygonal numbers \cite[p. 76]{Heat}.

We still use some figurate terminology today, such as square numbers,
$1, 4, 9, \dots$, which can literally be arranged in square patterns using stones or tiles.
Although these geometric representations cannot normally match the versatility of algebra, they do make it possible to carry out certain proofs visually and with remarkable simplicity. One of the most familiar examples is the visual proof showing that the sum of consecutive odd numbers is always a square.

If we represent each odd number 
$2n+1$ as a corner-shaped arrangement formed by two segments of length 
$n$ meeting at a single square, then the diagram shows that $1+3+5+\cdots+(2n-1) = n^2$.
\vskip .5cm

\begin{tikzpicture}
\draw [fill=green!50, very thick] (0,0) rectangle (0.5,0.5);

\draw [fill=orange!40, very thick] (2,0) rectangle (2.5,0.5);
\draw [fill=orange!40, very thick] (1.5,-0.5) rectangle (2,0);
\draw [fill=orange!40.0, very thick] (2,-0.5) rectangle (2.5,0);

\foreach  \y in {0, 0.5}{
\draw [fill=blue!40, very thick] (4.5,-\y) rectangle (5,-\y+0.5);
}
\draw [fill=blue!40.0, very thick] (3.5+1,-1) rectangle (4+1,-0.5);

\foreach  \x in {0, 0.5}{
\draw [fill=blue!40, very thick] (3.5+\x,-1) rectangle (4+\x,-0.5);
}

\foreach  \y in {0, 0.5, 1}{
\draw [fill=red!40, very thick] (7.5,-\y) rectangle (8,-\y+0.5);
}
\draw [fill=red!40.0, very thick] (6+1.5,-1.5) rectangle (6.5+1.5,-1);
\foreach  \x in {0, 0.5, 1}{
\draw [fill=red!40, very thick] (6+\x,-1.5) rectangle (6.5+\x,-1);
}

\foreach  \y in {0, 0.5, 1}{
\draw [fill=red!40, very thick] (7.5+3.5,-\y) rectangle (8+3.5,-\y+0.5);
}
\draw [fill=red!40.0, very thick] (6+5,-1.5) rectangle (6.5+5,-1);
\foreach  \x in {0, 0.5, 1}{
\draw [fill=red!40, very thick] (6.5+3+\x,-1.5) rectangle (7+3+\x,-1);
}

\foreach  \y in {0, 0.5}{
\draw [fill=blue!40, very thick] (4.5+6,-\y) rectangle (5+6,-\y+0.5);
}
\draw [fill=blue!40.0, very thick] (3.5+7,-1) rectangle (4+7,-0.5);

\foreach  \x in {0, 0.5}{
\draw [fill=blue!40, very thick] (4+5.5+\x,-1) rectangle (4.5+5.5+\x,-0.5);
}

\draw [fill=orange!40, very thick] (1+9,0) rectangle (1.5+9,0.5);
\draw [fill=orange!40.0, very thick] (1.5+8.5,-0.5) rectangle (2+8.5,0);
\draw [fill=orange!40, very thick] (2+7.5,-0.5) rectangle (2.5+7.5,0);

\draw [fill=green!50, very thick] (0+9.5,0) rectangle (0.5+9.5,0.5);

\node at (1,0.25) {$+$};
\node at (3,0.25) {$+$};
\node at (5.5,0.25) {$+$};
\node at (8.5,0.25) {$=$};
\end{tikzpicture}
\vspace{.5cm}
\captionof{figure}{Sum of consecutive odd numbers}
\label{Odd numbers}

\hskip 15pt The sum of any number of consecutive natural numbers $1, 2, 3, \dots $
defines a triangular number, with a standard geometric interpretation.

\begin{tikzpicture}
\draw [fill=blue!55, very thick] (0,0) rectangle (0.5,0.5);
\node[below] at (-0.25,-0.3) {$\hskip 1cm t_1=1$};
\draw [fill=blue!55, very thick] (2,0) rectangle (2.5,0.5);
\node[below] at (2.5,-0.3) {$t_2=1+2$};
\draw [fill=blue!55, very thick] (2,0.5) rectangle (2.5,1);
\draw [fill=blue!55, very thick] (2.5,0) rectangle (3,0.5);
\draw [fill=blue!55, very thick] (4.5,0.5) rectangle (5,1);
\node[below] at (5.25,-0.3) {$t_3=1+2+3$};
\draw [fill=blue!55, very thick] (4.5,1) rectangle (5,1.5);
\draw [fill=blue!55, very thick] (5,0.5) rectangle (5.5,1);
\draw [fill=blue!55, very thick] (4.5,0) rectangle (5,0.5);
\draw [fill=blue!55, very thick] (5,0) rectangle (5.5,0.5);
\draw [fill=blue!55, very thick] (5.5,0) rectangle (6,0.5);
\draw [fill=blue!55, very thick] (7.5,1) rectangle (8,1.5);
\node[below] at (8.5,-0.3) {$t_4=1+2+3+4$};
\draw [fill=blue!55, very thick] (7.5,1.5) rectangle (8,2);
\draw [fill=blue!55, very thick] (8,1) rectangle (8.5,1.5);
\draw [fill=blue!55, very thick] (7.5,0.5) rectangle (8,1);
\draw [fill=blue!55, very thick] (8,0.5) rectangle (8.5,1);
\draw [fill=blue!55, very thick] (8.5,0.5) rectangle (9,1);
\draw [fill=blue!55, very thick] (7.5,0) rectangle (8,0.5);
\draw [fill=blue!55, very thick] (8,0) rectangle (8.5,0.5);
\draw [fill=blue!55, very thick] (8.5,0) rectangle (9,0.5);
\draw [fill=blue!55, very thick] (9,0) rectangle (9.5,0.5);

\end{tikzpicture}
\captionof{figure}{Triangular numbers}
\restoreparindent
Although finding the explicit value of a triangular number may at first seem daunting, because it involves adding many consecutive terms, this task can be greatly simplified by a classic visual argument. The key observation is that, while a single triangular number \(t_n\) may be awkward to compute directly, the doubled quantity \(2t_n\) is much easier to handle. By rotating and translating a copy of the triangular arrangement in the plane, we can fit the two copies together into a rectangle, reorganising the same information in a more efficient way and making the value of $t_n$ easy to read off.
\vskip .1cm
\begin{tikzpicture}[scale=0.9]
 \draw[line width=0pt] (0,0) -- (0,0);
\begin{scope}[shift={(-6.4,0)}]
    \draw[->,>=stealth, line width=0.5mm] (13,2) .. controls (13,3.75) and (10.5,3.75) .. (10.5,2);
  \draw[->,>=stealth, line width=0.5mm] (10,2) .. controls (9.5,2) and (9.5,2) .. (9,2);   
\draw[<->] (7.25,0) -- (7.25,2.5) node[midway, left] {$n$};
\draw [fill=blue!55, very thick] (7.5,1.5) rectangle (8,2);
\node[below] at (10.25,1.5) {$+$};
\node[below] at (14.25,1.5) {$=$};
\draw [fill=blue!55, very thick] (7.5,2) rectangle (8,2.5);
\draw [fill=blue!55, very thick] (8,1.5) rectangle (8.5,2);
\draw [fill=blue!55, very thick] (7.5,1) rectangle (8,1.5);
\draw [fill=blue!55, very thick] (8,1) rectangle (8.5,1.5);
\draw [fill=blue!55, very thick] (8.5,1) rectangle (9,1.5);
\draw [fill=blue!55, very thick] (7.5,0.5) rectangle (8,1);
\draw [fill=blue!55, very thick] (8,0.5) rectangle (8.5,1);
\draw [fill=blue!55, very thick] (8.5,0.5) rectangle (9,1);
\draw [fill=blue!55, very thick] (9,0.5) rectangle (9.5,1);
\draw [fill=blue!55, very thick] (7.5,0) rectangle (8,0.5);
\draw [fill=blue!55, very thick] (8,0) rectangle (8.5,0.5);
\draw [fill=blue!55, very thick] (8.5,0) rectangle (9,0.5);
\draw [fill=blue!55, very thick] (9,0) rectangle (9.5,0.5);
\draw [fill=blue!55, very thick] (9.5,0) rectangle (10,0.5);
\draw [fill=red!55, very thick] (11,1.5) rectangle (11.5,2);
\draw [fill=red!55, very thick] (11,2) rectangle (11.5,2.5);
\draw [fill=red!55, very thick] (11.5,1.5) rectangle (12,2);
\draw [fill=red!55, very thick] (11,1) rectangle (11.5,1.5);
\draw [fill=red!55, very thick] (11.5,1) rectangle (12,1.5);
\draw [fill=red!55, very thick] (12,1) rectangle (12.5,1.5);
\draw [fill=red!55, very thick] (11,0.5) rectangle (11.5,1);
\draw [fill=red!55, very thick] (11.5,0.5) rectangle (12,1);
\draw [fill=red!55, very thick] (12,0.5) rectangle (12.5,1);
\draw [fill=red!55, very thick] (12.5,0.5) rectangle (13,1);
\draw [fill=red!55, very thick] (11,0) rectangle (11.5,0.5);
\draw [fill=red!55, very thick] (11.5,0) rectangle (12,0.5);
\draw [fill=red!55, very thick] (12,0) rectangle (12.5,0.5);
\draw [fill=red!55, very thick] (12.5,0) rectangle (13,0.5);
\draw [fill=red!55, very thick] (13,0) rectangle (13.5,0.5);
\draw[<->] (7.25+8,0) -- (7.25+8,2.5) node[midway, left] {$n$};
\draw [fill=blue!55, very thick] (7.5+8,1.5) rectangle (8+8,2);
\draw [fill=blue!55, very thick] (7.5+8,2) rectangle (8+8,2.5);
\draw [fill=blue!55, very thick] (8+8,1.5) rectangle (8.5+8,2);
\draw [fill=blue!55, very thick] (7.5+8,1) rectangle (8+8,1.5);
\draw [fill=blue!55, very thick] (8+8,1) rectangle (8.5+8,1.5);
\draw [fill=blue!55, very thick] (8.5+8,1) rectangle (9+8,1.5);
\draw [fill=blue!55, very thick] (7.5+8,0.5) rectangle (8+8,1);
\draw [fill=blue!55, very thick] (8+8,0.5) rectangle (8.5+8,1);
\draw [fill=blue!55, very thick] (8.5+8,0.5) rectangle (9+8,1);
\draw [fill=blue!55, very thick] (9+8,0.5) rectangle (9.5+8,1);
\draw [fill=blue!55, very thick] (7.5+8,0) rectangle (8+8,0.5);
\draw [fill=blue!55, very thick] (8+8,0) rectangle (8.5+8,0.5);
\draw [fill=blue!55, very thick] (8.5+8,0) rectangle (9+8,0.5);
\draw [fill=blue!55, very thick] (9+8,0) rectangle (9.5+8,0.5);
\draw [fill=blue!55, very thick] (9.5+8,0) rectangle (10+8,0.5);
\draw [fill=red!55, very thick] (7.5+8.5,2) rectangle (8+8.5,2.5);
\draw [fill=red!55, very thick] (7.5+9,2) rectangle (8+9,2.5); 
\draw [fill=red!55, very thick] (7.5+9.5,2) rectangle (8+9.5,2.5);
\draw [fill=red!55, very thick] (7.5+10,2) rectangle (8+10,2.5);
\draw [fill=red!55, very thick] (7.5+10.5,2) rectangle (8+10.5,2.5);

\draw [fill=red!55, very thick] (7.5+9,1.5) rectangle (8+9,2); 
\draw [fill=red!55, very thick] (7.5+9.5,1.5) rectangle (8+9.5,2);
\draw [fill=red!55, very thick] (7.5+10,1.5) rectangle (8+10,2);
\draw [fill=red!55, very thick] (7.5+10.5,1.5) rectangle (8+10.5,2);

\draw [fill=red!55, very thick] (7.5+9.5,1) rectangle (8+9.5,1.5);
\draw [fill=red!55, very thick] (7.5+10,1) rectangle (8+10,1.5);
\draw [fill=red!55, very thick] (7.5+10.5,1) rectangle (8+10.5,1.5);

\draw [fill=red!55, very thick] (7.5+10,0.5) rectangle (8+10,1);
\draw [fill=red!55, very thick] (7.5+10.5,0.5) rectangle (8+10.5,1);

\draw [fill=red!55, very thick] (7.5+10.5,0) rectangle (8+10.5,0.5);
\draw[<->] (15.5,-0.25) -- (18.5,-0.25) node[midway, below] {$n+1$};

\end{scope}
\end{tikzpicture}
 \captionof{figure}{Visual proof for the formula for triangular numbers}\label{Gauss}

\restoreparindent Since the number of tiles in a rectangular arrangement is simply the product of its base and height, we obtain $2t_n =n(n+1)$ and therefore the well-known formula for triangular numbers: 
\begin{equation} \label{Triangular}
t_n = 1+2+3+\cdots + n = \frac{n(n+1)}{2}.
\end{equation}

\restoreparindent The problem of finding a simple formula for the sum of the first 
$n$ squares was already studied by Archimedes (287--212 BC). In his work {\em On Spirals}, in connection with the area under the Archimedean spiral, he derived the following formula 

 \begin{equation}\label{Squares}
 1^2 + 2^2 + 3^3 + \cdots + n^2 = \frac{n(n+1)(2n+1)}{6}.
 \end{equation}
 
\restoreparindent This formula follows from the identity \cite[Proposition 10, p. 162]{Heat2}:
\begin{equation}\label{ArchimedesFormula}
(1+2+3+\cdots + n)+(n+1)n^2 = 3(1^2+2^2+\cdots + n^2).
\end{equation}
Using the expression $1+2+3+\cdots +n =\frac{n(n+1)}{2}$ and simplifying, one readily obtains formula $(\ref{Squares})$. This identity also admits a visual interpretation \cite{Pengelley}, illustrated in Figure $\ref{ArchimedesProof}$. 

\restoreparindent The sum of the first $n$ cubes was already discussed by Nicomachus of Gerasa in the 1st century AD in his {\em Introduction to Arithmetic} \cite{Nicomachus}, where he described the pattern underlying the first cases of the formula. The result, which may at first seem almost miraculous, is in fact the identity
\begin{equation}\label{Nicomachus}
1^3+2^3+3^3+\cdots + n^3 = (1+2+3+\cdots +n)^2.
\end{equation}

\restoreparindent This result was later discussed by other authors, notably the Indian mathematician Aryabhata in his Aryabhatiya (499 AD) \cite{Aryabhata}.

\restoreparindent A visual proof of the identity also exists \cite{Gulley}, shown in Figure $\ref{ClassicalNicomachus}$.
\vskip 0.5cm


\captionof{figure}{Visual proof of Nicomachus’ Theorem}\label{ClassicalNicomachus}

\restoreparindent The visual proofs for the sums of squares and cubes shown in Figures~\ref{ArchimedesProof} and~\ref{ClassicalNicomachus} are undoubtedly striking and aesthetically pleasing, yet they are not free from criticism. Archimedes’ argument relies on the idea that counting tiles in the $n$ squares of sides $1, 2, 3, \dots, n$ is cumbersome, and that arranging three copies of this configuration allows one to count the same total more easily as $3(1^2 + 2^2+ 3^2+\cdots +n^2)$. However, the visual proof in Figure~\ref{ArchimedesProof} does not seem to follow the same simple and intuitive pattern as the one in Figure~\ref{Gauss}.

In contrast, the construction used in Figure~\ref{ClassicalNicomachus} represents a more substantial conceptual shift. Rather than simplifying the count by combining pieces, it proceeds in the opposite direction: it breaks the configuration into smaller parts and rearranges them to form a new figure.

The main difficulty is that these visual arguments do not seem to generalise in any straightforward way to obtain formulas for the sums of $p$-th powers, $S_p(n)=\sum_{k=1}^n k^p$.

The search for formulas for the sums of fourth, fifth, and higher powers has a long history involving many distinguished mathematicians.

In 1631, Johann Faulhaber (1580--1635) published explicit formulas for the sums of powers up to the 17th case, although he did not give a general expression \cite{Faulhaber1631,Knuth1993}. A few years later, in 1636, Pierre de Fermat (1601--1665) introduced recurrence relations based on figurate numbers that expressed any such sum in terms of earlier ones, but the resulting computations soon became unmanageable \cite{Mahoney1973, BeeryFermat}.

As a consequence of his study of the arithmetic triangle, Blaise Pascal (1623--1662) discovered in 1654 a more practical formula for any sum, although still in recursive form \cite{Pascal1654,Katz2009}.

The first closed formula for all sums of powers was obtained by Jacob Bernoulli (1654--1705) in his posthumous work \emph{Ars Conjectandi} \cite{BernoulliArs}.
 There he introduced the numbers that now bear his name and showed that the sequence $B_0, B_1, B_2, \dots$ provides a uniform formula for these sums. He wrote with evident excitement:

\vspace{6pt}
\textit{“…it took me less than half of a quarter of an hour to find that the tenth power of the first 1000 numbers being added together would yield the sum 91,409,924,241,424,243,424,241,924,242,500.”}
\vspace{6pt}

The closed formula he obtained is the following:
\begin{equation} \label{FaulhaberFormula}
S_p(n)=\sum_{k=1}^n k^p 
= \frac{1}{p+1}\sum_{j=0}^p \binom{p+1}{j}B_j\,n^{p+1-j},
\end{equation}
where the numbers $\{ B_n\}_{n=0}^\infty $ are defined recursively by:
\begin{equation}\label{Bernoulli numbers}
\sum_{i=0}^m \binom{m+1}{i}B_i = m+1,\qquad m=0,1,2,\dots 
\end{equation}

The first values of this numerical sequence are listed in the following table:

\begin{table}[h]
\centering
\renewcommand{\arraystretch}{1.5} 
\begin{tabular}{ccccccccccccccccc}
\hline
\(n\) & 0 & 1 & 2 & 3 & 4 & 5 & 6 & 7 & 8 & 9 & 10 & 11 & 12 & 13 & 14 & 15  \\ \hline
\(B_n\) & 1 & \(\frac{1}{2}\) & \(\frac{1}{6}\) & 0 & \(-\frac{1}{30}\) & 0 & \(\frac{1}{42}\) & 0 & \(-\frac{1}{30}\) & 0 & \(\frac{5}{66}\) & 0 & \(-\frac{691}{2730}\) & 0 & \(\frac{7}{6}\) & 0 \\ \hline
\end{tabular}
\caption{Values of the first Bernoulli numbers.}
\label{tab:bernoulli_ext}
\end{table}

Thus, the formulas for the cases $p=1,2,3,4$ are as follows:
\begin{equation}\label{FirstCases}
\begin{aligned} 
\sum_{k=1}^n k^{\ } &=\frac{1}{2}(n^2 + n) = \frac{n(n+1)}{2},\\
\sum_{k=1}^n k^2 &=\frac{1}{3}(n^3+\frac{3}{2}n^2 + \frac{1}{2}n) = \frac{n(n+1)(2n+1)}{6},\\
\sum_{k=1}^n k^3 &=\frac{1}{4}(n^4 + 2n^3+n^2) = \frac{n^2(n+1)^2}{4},\\
\sum_{k=1}^n k^4 &=\frac{1}{5}(n^5 + \frac{5}{2}n^4+\frac{5}{3}n^3-\frac{1}{6}n) = \frac{n(n+1)(2n+1)(3n^2+3n-1)}{30}.
\end{aligned}
\end{equation}
\vskip .1cm

However, the proof of formula~\eqref{FaulhaberFormula} for sums of powers is purely algebraic (see, for example, \cite{Arakawa}). In this work we aim to provide geometric insight into these formulas by means of visual proofs.

Having concluded our historical overview of the sums of powers, we now return to the need for a simple and generalisable proof in the case of the sums of squares. The following proof of formula~\eqref{Squares} is very recent despite its simplicity \cite{Siu, Nelsen}.

In this setting we count unit cubes. Thus, summing the first 
$n$ squares amounts to counting a single unit cube, then the cubes in a square of edge length $2$, then those in a square of edge length $3$, and so on up to a square of edge length $n$.

These cubes can be arranged to form a square-based pyramid of height $n$. Thus, computing $1^2 + 2^2+ 3^2+\cdots +n^2$
is equivalent to counting the number of unit cubes in this pyramid.
\vskip .6cm
{\huge \ $1^2$\ \ \ \ $+$\ \ \ \ $2^2$\ \ \ \ \ $+$\ \ \ \ \ $3^2$\ \ \ $+$\ \ \ $\cdots$\ \ \ $+$\ \ \ \ $n^2$}
\vskip .3cm
\begin{tikzpicture}[scale=0.5]
    \tikzset{cube/.style={very thin,draw=black, opacity=1}}
    \begin{scope}[rotate around x=0, rotate around y=-35]
       
        {

                 \draw[cube] (0,0,0) -- (1,0,0) -- (1,0,1) -- (0,0,1) -- cycle; 

                \draw[cube] (0,0,0) -- (0,0,1) -- (0,1,1) -- (0,1,0) -- cycle; 
               
                \draw[cube] (0,0,0) -- (1,0,0) -- (1,1,0) -- (0,1,0) -- cycle; 
                
                 \draw[cube, fill=orange!55] (0,1,0) -- (1,1,0) -- (1,1,1) -- (0,1,1) -- cycle; 
                \draw[cube, fill=orange!55!black] (0,0,1) -- (1,0,1) -- (1,1,1) -- (0,1,1) -- cycle; 
                \draw[cube, fill=orange!30] (1,0,0) -- (1,1,0) -- (1,1,1) -- (1,0,1) -- cycle; 

        }
    \end{scope}

\begin{scope}[shift={(4,0)}]
 \begin{scope}[rotate around x=0, rotate around y=-35]
        \foreach \x in {0,1}
        \foreach \y in {0}
        \foreach \z in {1,0}
        {
            \begin{scope}[shift={(\x,\y,-\z)}]
                
                 \draw[cube] (0,0,0) -- (1,0,0) -- (1,0,1) -- (0,0,1) -- cycle; 

                \draw[cube] (0,0,0) -- (0,0,1) -- (0,1,1) -- (0,1,0) -- cycle; 
               
                \draw[cube] (0,0,0) -- (1,0,0) -- (1,1,0) -- (0,1,0) -- cycle; 
                
                 \draw[cube, fill=orange!55] (0,1,0) -- (1,1,0) -- (1,1,1) -- (0,1,1) -- cycle; 
              \draw[cube, fill=orange!55!black] (0,0,1) -- (1,0,1) -- (1,1,1) -- (0,1,1) -- cycle; 
                \draw[cube, fill=orange!30] (1,0,0) -- (1,1,0) -- (1,1,1) -- (1,0,1) -- cycle; 
                
            \end{scope}
        }
    \end{scope}

\end{scope}

\begin{scope}[shift={(10,0)}]
 \begin{scope}[rotate around x=0, rotate around y=-35]
        \foreach \x in {0,1,2}
        \foreach \y in {0}
        \foreach \z in {2,1,0}
        {
            \begin{scope}[shift={(\x,\y,-\z)}]
                
                 \draw[cube] (0,0,0) -- (1,0,0) -- (1,0,1) -- (0,0,1) -- cycle; 

                \draw[cube] (0,0,0) -- (0,0,1) -- (0,1,1) -- (0,1,0) -- cycle; 
               
                \draw[cube] (0,0,0) -- (1,0,0) -- (1,1,0) -- (0,1,0) -- cycle; 
                
                 \draw[cube, fill=orange!55] (0,1,0) -- (1,1,0) -- (1,1,1) -- (0,1,1) -- cycle; 
                \draw[cube, fill=orange!55!black] (0,0,1) -- (1,0,1) -- (1,1,1) -- (0,1,1) -- cycle; 
                \draw[cube, fill=orange!30] (1,0,0) -- (1,1,0) -- (1,1,1) -- (1,0,1) -- cycle; 
                
            \end{scope}
        }
    \end{scope}

\end{scope}

\begin{scope}[shift={(17.5,0)}]
 \begin{scope}[rotate around x=0, rotate around y=-35]
        \foreach \x in {0,1,2,3}
        \foreach \y in {0}
        \foreach \z in {3,2,1,0}
        {
            \begin{scope}[shift={(\x,\y,-\z)}]
                
                 \draw[cube] (0,0,0) -- (1,0,0) -- (1,0,1) -- (0,0,1) -- cycle; 

                \draw[cube] (0,0,0) -- (0,0,1) -- (0,1,1) -- (0,1,0) -- cycle; 
               
                \draw[cube] (0,0,0) -- (1,0,0) -- (1,1,0) -- (0,1,0) -- cycle; 
                
                 \draw[cube, fill=orange!55] (0,1,0) -- (1,1,0) -- (1,1,1) -- (0,1,1) -- cycle; 
                \draw[cube, fill=orange!55!black] (0,0,1) -- (1,0,1) -- (1,1,1) -- (0,1,1) -- cycle; 
                \draw[cube, fill=orange!30] (1,0,0) -- (1,1,0) -- (1,1,1) -- (1,0,1) -- cycle; 
                
            \end{scope}
        }
    \end{scope}

\end{scope}

\draw[<->] (19,-1.5) -- (22.7,-0.2); \node[right] at (20.7, -1.3) {$n$};
\end{tikzpicture}
\begin{tikzpicture}
    \draw[dashed, thick] (0,0) -- (0,0);
    \draw[dashed, thick] (0,-0.5) -- (12.5,-0.5);
    \draw[dashed, thick] (0,-1) -- (0,-1);
\end{tikzpicture}
\begin{tikzpicture}[scale=0.5]
    \tikzset{cube/.style={very thin,draw=black, opacity=1}}

\begin{scope}[shift={(0,-3)}]
 \begin{scope}[rotate around x=0, rotate around y=-35]
        \foreach \x in {0,1,2,3}
        \foreach \y in {0}
        \foreach \z in {3,2,1,0}
        {
            \begin{scope}[shift={(\x,\y,-\z)}]
                
                 \draw[cube] (0,0,0) -- (1,0,0) -- (1,0,1) -- (0,0,1) -- cycle; 

                \draw[cube] (0,0,0) -- (0,0,1) -- (0,1,1) -- (0,1,0) -- cycle; 
               
                \draw[cube] (0,0,0) -- (1,0,0) -- (1,1,0) -- (0,1,0) -- cycle; 
                
                 \draw[cube, fill=orange!55] (0,1,0) -- (1,1,0) -- (1,1,1) -- (0,1,1) -- cycle; 
                \draw[cube, fill=orange!55!black] (0,0,1) -- (1,0,1) -- (1,1,1) -- (0,1,1) -- cycle; 
                \draw[cube, fill=orange!30] (1,0,0) -- (1,1,0) -- (1,1,1) -- (1,0,1) -- cycle; 
                
            \end{scope}
        }
    \end{scope}

\end{scope}

\begin{scope}[shift={(0,-2)}]
 \begin{scope}[rotate around x=0, rotate around y=-35]
        \foreach \x in {0,1,2}
        \foreach \y in {0}
        \foreach \z in {2,1,0}
        {
            \begin{scope}[shift={(\x,\y,-\z)}]
                
                 \draw[cube] (0,0,0) -- (1,0,0) -- (1,0,1) -- (0,0,1) -- cycle; 

                \draw[cube] (0,0,0) -- (0,0,1) -- (0,1,1) -- (0,1,0) -- cycle; 
               
                \draw[cube] (0,0,0) -- (1,0,0) -- (1,1,0) -- (0,1,0) -- cycle; 
                
                 \draw[cube, fill=orange!55] (0,1,0) -- (1,1,0) -- (1,1,1) -- (0,1,1) -- cycle; 
                \draw[cube, fill=orange!55!black] (0,0,1) -- (1,0,1) -- (1,1,1) -- (0,1,1) -- cycle; 
                \draw[cube, fill=orange!30] (1,0,0) -- (1,1,0) -- (1,1,1) -- (1,0,1) -- cycle; 
                
            \end{scope}
        }
    \end{scope}

\end{scope}

\begin{scope}[shift={(0,-1)}]
 \begin{scope}[rotate around x=0, rotate around y=-35]
        \foreach \x in {0,1}
        \foreach \y in {0}
        \foreach \z in {1,0}
        {
            \begin{scope}[shift={(\x,\y,-\z)}]
                
                 \draw[cube] (0,0,0) -- (1,0,0) -- (1,0,1) -- (0,0,1) -- cycle; 

                \draw[cube] (0,0,0) -- (0,0,1) -- (0,1,1) -- (0,1,0) -- cycle; 
               
                \draw[cube] (0,0,0) -- (1,0,0) -- (1,1,0) -- (0,1,0) -- cycle; 
                
                 \draw[cube, fill=orange!55] (0,1,0) -- (1,1,0) -- (1,1,1) -- (0,1,1) -- cycle; 
              \draw[cube, fill=orange!55!black] (0,0,1) -- (1,0,1) -- (1,1,1) -- (0,1,1) -- cycle; 
                \draw[cube, fill=orange!30] (1,0,0) -- (1,1,0) -- (1,1,1) -- (1,0,1) -- cycle; 
                
            \end{scope}
        }
    \end{scope}

\end{scope}

 \begin{scope}[rotate around x=0, rotate around y=-35]
       
        {

                 \draw[cube] (0,0,0) -- (1,0,0) -- (1,0,1) -- (0,0,1) -- cycle; 

                \draw[cube] (0,0,0) -- (0,0,1) -- (0,1,1) -- (0,1,0) -- cycle; 
               
                \draw[cube] (0,0,0) -- (1,0,0) -- (1,1,0) -- (0,1,0) -- cycle; 
                
                 \draw[cube, fill=orange!55] (0,1,0) -- (1,1,0) -- (1,1,1) -- (0,1,1) -- cycle; 
                \draw[cube, fill=orange!55!black] (0,0,1) -- (1,0,1) -- (1,1,1) -- (0,1,1) -- cycle; 
                \draw[cube, fill=orange!30] (1,0,0) -- (1,1,0) -- (1,1,1) -- (1,0,1) -- cycle; 

        }
    \end{scope}

\draw[<->] (-1.25,0.7) -- (-1.25,-3.3); \node[right] at (-2.25, -1.5) {$n$};
 
\end{tikzpicture}
\vspace{0.5cm}
\captionof{figure}{Sum of squares and unit cubes in a pyramid}\label{Pyramid}
\vskip .5cm

\restoreparindent Let us denote by $p_n$ the number of unit cubes in this pyramid. As with triangular numbers, although $p_n$ may be difficult to compute directly, it is somewhat paradoxical that $3p_n$ turns out to be much easier to determine.

\restoreparindent Indeed, if we take three identical pyramids, we can rotate and translate them in three-dimensional space to assemble them into a single solid block.

\vskip .5cm
{\huge \hskip 2cm $3\ (\ 1^2\ +\ 2^2\ +\ 3^2\ +\ \cdots \ +\ n^2\  )$}
\vskip .7cm


\vskip .3cm
\captionof{figure}{Three pyramids forming an almost rectangular block}\label{Threepyramids}

\restoreparindent Since the block has a stepped top surface, we need a small geometric “DIY step” —a brief adjustment to smooth the top layer— in order to obtain a perfect parallelepiped.

\restoreparindent  The resulting parallelepiped (see Figure $\ref{DIY}$) has $n+1$ units in length, $n$ in width, and  $n+\frac{1}{2}$ in height. It follows that the total number of unit cubes in the three square-based pyramids of height $n$ is $n(n+1)(n+\frac{1}{2})$.

\restoreparindent Thus, we obtain the formula:
\begin{equation}\label{SumofSquaresDenominator3}
p_n = 1^2 + 2^2+ 3^2 +\cdots + n^2 = \frac{n(n+1)(n+\frac{1}{2})}{3}.
\end{equation}


\vskip .2cm
\captionof{figure}{Geometric DIY step to level the top layer}\label{DIY}

\restoreparindent Although formulas~\eqref{Squares} and~\eqref{SumofSquaresDenominator3} are essentially the same, the way they are written plays an important role in introducing the fundamental idea of this work.

\restoreparindent The visual proofs in Figure~\ref{Gauss} for triangular numbers $1+2+3+\cdots +n$, and in Figures~\ref{Threepyramids} and~\ref{DIY} for pyramidal numbers $1^2 + 2^2 + 3^2+\cdots +n^2$ are entirely analogous. In both cases the key idea is the same: combining two triangular arrangements to form a rectangle, or combining three square-based pyramids to obtain a parallelepiped. As a consequence, the resulting formulas naturally have a denominator equal to the number of pieces used, which in turn matches the dimension in which the visual proof takes place.

\restoreparindent The number of factors in the numerator also reflects the dimension. These factors correspond precisely to the measurements of the resulting block, and the numbers appearing in them —the roots of the polynomial in $n$— arise from the geometric adjustments required to obtain a perfect block.
\vskip .3cm
\begin{tikzpicture}
\draw[-, line width=0mm] (0,0) -- (0, 0);

\begin{scope}[shift={(3.5,-2)}]
    \node (frac) at (0,0) {\huge $\displaystyle \frac{n(n+1)\left(n+\tikz[baseline=(char.base)]\node[draw, dashed, line width=0.5pt, inner sep=4pt, rectangle](char){$\frac{1}{2}$};\right)}{3}$};

    
      \draw[->, line width=0.3mm] (1.7,1) -- (1.7, 1.5) node[midway, above, yshift=5pt]{Term produced by the geometric ‘DIY step’.};
      \draw[->, line width=0.3mm] (2.5,0.4) -- (3, 0.4) node[right,  align=left]{\hskip .2cm As many factors as indicated \\\hskip .2cm  by the denominator.};
       \draw[->, line width=0.3mm] (1.5,-0.7) -- (3, -0.7) node[right,  align=left]{\hskip .2cm The dimension in which  \\\hskip .2cm  the visual proof takes place.};
     
\end{scope}
\end{tikzpicture}
\captionof{figure}{Structure of the formula for the sum of powers}\label{Structure}
\vskip .5cm

\restoreparindent Although the visual proof of Nicomachus’ Theorem (see Figure~\ref{ClassicalNicomachus}) does not follow the same pattern as the previous two, the resulting identity
$1^3 +2^3+3^3+ \cdots +n^3 = (1+2+3+\cdots +n )^2$ nonetheless exhibits the same structural features described in Figure~\ref{Structure}. Since 
$1+2+3+\cdots + n = \frac{n(n+1)}{2}$, we obtain 
\begin{equation}\label{formula4D}
\begin{aligned}
1^3 +2^3+3^3+ \cdots +n^3&=(1+2+3+\cdots +n )^2\\
&=\Bigl( \frac{n(n+1)}{2}\Bigr)^2\\
&=\frac{n^2(n+1)^2}{4}.
\end{aligned} 
\end{equation}

This expression also appears in \eqref{FirstCases} via Faulhaber’s formula.

\restoreparindent In other words, the formula has four factors in the numerator and a $4$ in the denominator; yet the visual proof itself does not take place in four–dimensional space. This naturally leads to the following question:
\vskip .3cm
\textbf{Is it possible to give a visual proof of Nicomachus’ Theorem in 4–dimensions by assembling four pyramids of hypercubes so that the product of the dimensions of the resulting 
4–dimensional block yields formula~\eqref{formula4D}?}
\vskip .3cm

\restoreparindent Once this objective has been achieved, we can be more ambitious. If we examine the fourth identity in~\eqref{FirstCases} —Faulhaber’s formula for the sum of fourth powers— we see that it can be rewritten in the structural form shown in Figure~\ref{Structure}:
\begin{equation}\label{enigmatic}
1^4+2^4+3^4+\cdots +n^4 = \frac{n(n+1)(n+\frac{1}{2})(n^2+n-\frac{1}{3})}{5}.
\end{equation}
The term $\frac{1}{2}$ has appeared before, and we understand the type of geometric adjustment that gives rise to it. But what is the meaning of the irreducible factor $n^2+n-\frac{1}{3}$?

This leads to a second question:

\vskip .3cm
\textbf{Can we extend the previous visual constructions and obtain a visual proof in 5D of formula~\eqref{enigmatic} that reveals the geometric nature of the irreducible factor
$n^2+n-\frac{1}{3}$?}
\vskip.3cm

\restoreparindent In this work we address both questions. The article is organised as follows.

\noindent Section~\ref{LowerDimensional} introduces the preliminary tools needed for our constructions.

\noindent Section~\ref{LowerSections} presents the technique we will use to visualize higher–dimensional objects. In particular, we will see how different lower–dimensional sections of the same object provide different answers to the same question, yielding identities immediately. More importantly, this approach will give us flexibility when assembling the two–dimensional puzzles that arise later.

\noindent Section~\ref{Sumin2D} applies this technique to give a two–dimensional visual proof of formula~\eqref{SumofSquaresDenominator3}, corresponding to Figures~\ref{Threepyramids} and~\ref{DIY}.

\noindent Section~3 is devoted to Nicomachus’ Theorem in four dimensions.

\noindent Section~3.1 explains how to work with three–dimensional sections of four–dimensional hypercubes to provide a visual proof of formula~\eqref{formula4D}.

\noindent Section~3.2 reduces this argument by one dimension, translating the proof into a two–dimensional puzzle. This reduction will be essential when extending the method to higher dimensions.

\noindent Section~4 presents a visual proof of formula~\eqref{enigmatic} for the sum of fourth powers.

\noindent Section~4.1 assembles four 5D pyramids and displays the resulting configuration as a 2D puzzle.

\noindent Section~4.2 incorporates the fifth 5D pyramid into the same puzzle.

\noindent Section~4.3 introduces a first cut-and-paste operation that accounts for the factors $n(n+1)$.

\noindent Section~4.4 is devoted to understanding the second cut-and-paste operation, which explains the remaining factor $n^2+n-\tfrac{1}{3}$.

\noindent Section~4.5 derives the factor $n+\tfrac{1}{2}$ by clarifying the construction carried out in the previous section.


\begin{remark}[Previous Work on Visual Proofs in Higher Dimensions]
Before beginning our journey through visual proofs in higher dimensions, we briefly mention several earlier works that have explored related ideas from different perspectives:
\begin{itemize}[leftmargin=.5cm]
\item[(1)] One of the fundamental references on figurate numbers is undoubtedly {\em The Book of Numbers} by John H. Conway and Richard K. Guy \cite{Conway}. In Chapter 2 ({\em Figures from Figures Doing Arithmetic and Algebra by Geometry}), the section on the third dimension shows, in a striking way, that hex pyramids are in fact cubes, using the three-dimensional analogue of the proof in Figure~\ref{Odd numbers}. In the section devoted to the fourth dimension, the authors remark: “Although it’s hard to visualize jigsaw puzzles in four dimensions, it can be done!”, and they employ the technique of assembling blocks and multiplying their dimensions to obtain expressions for the {\em pentatope numbers}.

\item [(2)] In \cite{Sasho}, the author begins with a visual proof of the identity $1+2+3+\cdots +n=\frac{n(n+1)}{2}$ different from the one shown in Figure~\ref{Gauss}, and generalizes this construction to three dimensions in order to obtain a visual proof of identity~\eqref{Squares} for the sum of squares. The transition to the next dimension is carried out using visual representations of hypercubes, and the author even manages to give a visual demonstration of identity~\eqref{formula4D} for the sum of cubes. In the Epilogue, however, he acknowledges the following obstacle: 
\vskip .2cm
\emph{
“...there should be a five-dimensional block made of five-dimensional
parallelepipeds in such a way that the corresponding volumes yield the
above identity. However, something new intervenes in five dimensions:
the polynomial \(3n^{2}+3n-1\) is irreducible over the integers.
I do not know if that significantly affects the procedure, but
visualization will certainly be obstructed by the technical problem of
having to draw \(5\)-dimensional objects in \(2\) dimensions.”
}
\end{itemize}
\end{remark}

\section{Lower dimensional sections and the sum of squares} \label{LowerDimensional}
In this section, we introduce the technique of using sections to visualize a three-dimensional figure through its lower-dimensional slices. To illustrate this method, we will revisit the visual proof of formula~\eqref{SumofSquaresDenominator3}, given in Figures~\ref{Threepyramids} and~\ref{DIY}, by examining its planar sections. In this way, the original solid-geometry problem is transformed into a two-dimensional puzzle.

\subsection{Lower-dimensional sections} \label{LowerSections}
One way to visualise a three-dimensional figure is through its planar X-rays. For example, if we wish to view the square-based pyramid in Figure~\ref{Pyramid} as if we were beings from {\em Flatland} \cite{Abbott}, we can take suitable two-dimensional sections of the figure. Since our goal is to count the number of unit cubes efficiently using images, it suffices to choose X-rays in which each cube is scanned exactly once, allowing us to count squares in the planar sections instead of cubes in the original solid.

In the figure we see that the number of squares appearing in the X-rays is $1^2+2^2+3^2+\cdots +n^2$, , which is to be expected: the number of unit cubes in the pyramid is precisely the same sum.

We will refer to the planar sections of the pyramid whose X-rays produce sums of powers (in this case, squares) as the {\em Main Sections}.
\vskip .5cm
\begin{tikzpicture}[scale=0.47]

    \tikzset{cube/.style={very thin,draw=black, opacity=1}}

\begin{scope}[shift={(1.5,1)}]
\begin{scope}[shift={(0,-3)}]
 \begin{scope}[rotate around x=0, rotate around y=-35]
        \foreach \x in {0,1,2,3}
        \foreach \y in {0}
        \foreach \z in {3,2,1,0}
        {
            \begin{scope}[shift={(\x,\y,-\z)}]
                
                 \draw[cube] (0,0,0) -- (1,0,0) -- (1,0,1) -- (0,0,1) -- cycle; 

                \draw[cube] (0,0,0) -- (0,0,1) -- (0,1,1) -- (0,1,0) -- cycle; 
               
                \draw[cube] (0,0,0) -- (1,0,0) -- (1,1,0) -- (0,1,0) -- cycle; 
                
                 \draw[cube, fill=orange!55] (0,1,0) -- (1,1,0) -- (1,1,1) -- (0,1,1) -- cycle; 
                \draw[cube, fill=orange!55!black] (0,0,1) -- (1,0,1) -- (1,1,1) -- (0,1,1) -- cycle; 
                \draw[cube, fill=orange!30] (1,0,0) -- (1,1,0) -- (1,1,1) -- (1,0,1) -- cycle; 
                
            \end{scope}
        }
    \end{scope}

\end{scope}

\begin{scope}[shift={(0,-2)}]
 \begin{scope}[rotate around x=0, rotate around y=-35]
        \foreach \x in {0,1,2}
        \foreach \y in {0}
        \foreach \z in {2,1,0}
        {
            \begin{scope}[shift={(\x,\y,-\z)}]

                \draw[cube, fill=orange!55] (0,0.5,0) -- (1,0.5,0) -- (1,0.5,1) -- (0,0.5,1) -- cycle; 
                \draw[cube, fill=orange!55!black] (0,0,1) -- (1,0,1) -- (1,0.5,1) -- (0,0.5,1) -- cycle; 
                \draw[cube, fill=orange!30] (1,0,0) -- (1,0.5,0) -- (1,0.5,1) -- (1,0,1) -- cycle; 
                
            \end{scope}
        }
    \end{scope}

\end{scope}

\begin{scope}[shift={(2.5,-0.8)}]
\begin{scope}[rotate around x=0, rotate around y=-35]
  \draw[cube, fill=white!95, opacity=0.75] (0,0.5,0) -- (6.5,0.5,0) -- (6.5,0.5,5.8) -- (0,0.5,5.8) -- cycle; 
  \end{scope}
\end{scope}

\begin{scope}[shift={(0,-1.5)}]
 \begin{scope}[rotate around x=0, rotate around y=-35]
        \foreach \x in {0,1,2}
        \foreach \y in {0}
        \foreach \z in {2,1,0}
        {
            \begin{scope}[shift={(\x,\y,-\z)}]

                \draw[cube, fill=orange!55] (0,0.5,0) -- (1,0.5,0) -- (1,0.5,1) -- (0,0.5,1) -- cycle; 
                \draw[cube, fill=orange!55!black] (0,0,1) -- (1,0,1) -- (1,0.5,1) -- (0,0.5,1) -- cycle; 
                \draw[cube, fill=orange!30] (1,0,0) -- (1,0.5,0) -- (1,0.5,1) -- (1,0,1) -- cycle; 
                
            \end{scope}
        }
    \end{scope}

\end{scope}

\begin{scope}[shift={(0,-1)}]
 \begin{scope}[rotate around x=0, rotate around y=-35]
        \foreach \x in {0,1}
        \foreach \y in {0}
        \foreach \z in {1,0}
        {
            \begin{scope}[shift={(\x,\y,-\z)}]
                
                 \draw[cube] (0,0,0) -- (1,0,0) -- (1,0,1) -- (0,0,1) -- cycle; 

                \draw[cube] (0,0,0) -- (0,0,1) -- (0,1,1) -- (0,1,0) -- cycle; 
               
                \draw[cube] (0,0,0) -- (1,0,0) -- (1,1,0) -- (0,1,0) -- cycle; 
                
                 \draw[cube, fill=orange!55] (0,1,0) -- (1,1,0) -- (1,1,1) -- (0,1,1) -- cycle; 
              \draw[cube, fill=orange!55!black] (0,0,1) -- (1,0,1) -- (1,1,1) -- (0,1,1) -- cycle; 
                \draw[cube, fill=orange!30] (1,0,0) -- (1,1,0) -- (1,1,1) -- (1,0,1) -- cycle; 
                
            \end{scope}
        }
    \end{scope}

\end{scope}

 \begin{scope}[rotate around x=0, rotate around y=-35]
       
        {

                 \draw[cube] (0,0,0) -- (1,0,0) -- (1,0,1) -- (0,0,1) -- cycle; 

                \draw[cube] (0,0,0) -- (0,0,1) -- (0,1,1) -- (0,1,0) -- cycle; 
               
                \draw[cube] (0,0,0) -- (1,0,0) -- (1,1,0) -- (0,1,0) -- cycle; 
                
                 \draw[cube, fill=orange!55] (0,1,0) -- (1,1,0) -- (1,1,1) -- (0,1,1) -- cycle; 
                \draw[cube, fill=orange!55!black] (0,0,1) -- (1,0,1) -- (1,1,1) -- (0,1,1) -- cycle; 
                \draw[cube, fill=orange!30] (1,0,0) -- (1,1,0) -- (1,1,1) -- (1,0,1) -- cycle; 

        }
    \end{scope}
\end{scope} 

\draw[-, line width=0.5mm] (-0.3,1.5) -- (-0.3,-1.2);
\draw[-, line width=0.5mm, opacity=0.3] (-0.3,-1.2) -- (-0.3,-1.45);
\draw[->,>=stealth, line width=0.5mm] (-0.3,-1.45) -- (-0.3,-2.5);

\draw[->, decorate, decoration={snake, amplitude=1.2mm, segment length=5mm}] (7,0) -- (10,0) node[midway, above, yshift=5pt, align=center] {\small X-rays \\ of the \\ pyramid};


\draw [fill=white!40, thick] (10.75,2.75) rectangle (24,-2.25);

\draw [fill=orange!55, line width=0.3mm] (11.75,1.75) rectangle (12.5,1);


\begin{scope}[shift={(13.5,0.25)}]
\foreach \x in {0, 0.75} {
        \foreach \y in {0, 0.75} {
         
            \draw [fill=orange!55, line width=0.3mm] (\x,\y) rectangle (\x+0.75,\y+0.75);
        }
    }
    \end{scope}


\begin{scope}[shift={(16.25,-0.5)}]
\foreach \x in {0, 0.75, 1.5} {
        \foreach \y in {0, 0.75, 1.5} {
         
            \draw [fill=orange!55, line width=0.3mm] (\x,\y) rectangle (\x+0.75,\y+0.75);
        }
    }
    \end{scope}


\begin{scope}[shift={(19.75,-1.25)}]
\foreach \x in {0, 0.75, 1.5, 2.25} {
        \foreach \y in {0, 0.75, 1.5, 2.25} {
         
            \draw [fill=orange!55, line width=0.3mm] (\x,\y) rectangle (\x+0.75,\y+0.75);
        }
    }
    \end{scope}
   
\end{tikzpicture}

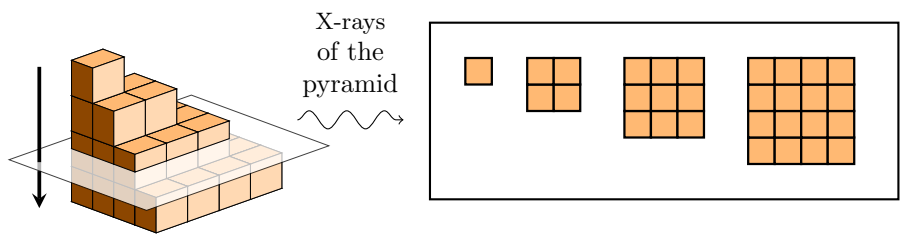
\captionof{figure}{Main sections of the pyramid.}\label{MainSections}
\vskip .5cm
\hskip 15pt The advantage of the sections method is that there is no unique way to take an X-ray of a pyramid. In fact, we may also take profile X-rays and observe that these sections contain as many squares as successively truncated triangular numbers.
\vskip .5cm
\begin{tikzpicture}[scale=0.47]
\draw[line width=0pt] (0,0) -- (0,0);
\begin{scope}[shift={(1.2,0)}]
    \tikzset{cube/.style={very thin,draw=black, opacity=1}}

\begin{scope}[shift={(1.5,1)}]

\begin{scope}[shift={(4.3,-1.9)}]
\begin{scope}[rotate around x=0, rotate around y=-35]
  \draw[cube, fill=white!95, opacity=0.75] (0.5,0,0) -- (0.5,6.5-1,0) -- (0.5,6.5-1,5.8+1) -- (0.5,0,5.8+1) -- cycle; 
  \end{scope}
\end{scope}

\begin{scope}[shift={(0,-3)}]
 \begin{scope}[rotate around x=0, rotate around y=-35]
        \foreach \x in {0,1,2,3}
        \foreach \y in {0}
        \foreach \z in {3,2,1,0}
        {
            \begin{scope}[shift={(\x,\y,-\z)}]
                
                 \draw[cube] (0,0,0) -- (1,0,0) -- (1,0,1) -- (0,0,1) -- cycle; 

                \draw[cube] (0,0,0) -- (0,0,1) -- (0,1,1) -- (0,1,0) -- cycle; 
               
                \draw[cube] (0,0,0) -- (1,0,0) -- (1,1,0) -- (0,1,0) -- cycle; 
                
                 \draw[cube, fill=orange!55] (0,1,0) -- (1,1,0) -- (1,1,1) -- (0,1,1) -- cycle; 
                \draw[cube, fill=orange!55!black] (0,0,1) -- (1,0,1) -- (1,1,1) -- (0,1,1) -- cycle; 
                \draw[cube, fill=orange!30] (1,0,0) -- (1,1,0) -- (1,1,1) -- (1,0,1) -- cycle; 
                
            \end{scope}
        }
    \end{scope}

\end{scope}

\begin{scope}[shift={(0,-2)}]
 \begin{scope}[rotate around x=0, rotate around y=-35]
        \foreach \x in {0,1,2}
        \foreach \y in {0}
        \foreach \z in {2,1,0}
        {
            \begin{scope}[shift={(\x,\y,-\z)}]

                \draw[cube, fill=orange!55] (0,1,0) -- (1,1,0) -- (1,1,1) -- (0,1,1) -- cycle; 
                \draw[cube, fill=orange!55!black] (0,0,1) -- (1,0,1) -- (1,1,1) -- (0,1,1) -- cycle; 
                \draw[cube, fill=orange!30] (1,0,0) -- (1,1,0) -- (1,1,1) -- (1,0,1) -- cycle; 
                
            \end{scope}
        }
    \end{scope}

\end{scope}

\begin{scope}[shift={(0,-1)}]
 \begin{scope}[rotate around x=0, rotate around y=-35]
        \foreach \x in {0,1}
        \foreach \y in {0}
        \foreach \z in {1,0}
        {
            \begin{scope}[shift={(\x,\y,-\z)}]
                
                 \draw[cube] (0,0,0) -- (1,0,0) -- (1,0,1) -- (0,0,1) -- cycle; 

                \draw[cube] (0,0,0) -- (0,0,1) -- (0,1,1) -- (0,1,0) -- cycle; 
               
                \draw[cube] (0,0,0) -- (1,0,0) -- (1,1,0) -- (0,1,0) -- cycle; 
                
                 \draw[cube, fill=orange!55] (0,1,0) -- (1,1,0) -- (1,1,1) -- (0,1,1) -- cycle; 
              \draw[cube, fill=orange!55!black] (0,0,1) -- (1,0,1) -- (1,1,1) -- (0,1,1) -- cycle; 
                \draw[cube, fill=orange!30] (1,0,0) -- (1,1,0) -- (1,1,1) -- (1,0,1) -- cycle; 
                
            \end{scope}
        }
    \end{scope}

\end{scope}

 \begin{scope}[rotate around x=0, rotate around y=-35]
       
        {

                 \draw[cube] (0,0,0) -- (1,0,0) -- (1,0,1) -- (0,0,1) -- cycle; 

                \draw[cube] (0,0,0) -- (0,0,1) -- (0,1,1) -- (0,1,0) -- cycle; 
               
                \draw[cube] (0,0,0) -- (1,0,0) -- (1,1,0) -- (0,1,0) -- cycle; 
                
                 \draw[cube, fill=orange!55] (0,1,0) -- (1,1,0) -- (1,1,1) -- (0,1,1) -- cycle; 
                \draw[cube, fill=orange!55!black] (0,0,1) -- (1,0,1) -- (1,1,1) -- (0,1,1) -- cycle; 
                \draw[cube, fill=orange!30] (1,0,0) -- (1,1,0) -- (1,1,1) -- (1,0,1) -- cycle; 

        }
    \end{scope}
\end{scope}

\draw[->, decorate, decoration={snake, amplitude=1.2mm, segment length=5mm}] (7,0) -- (10,0) node[midway, above, yshift=5pt, align=center] {\small X-rays \\ of the \\ pyramid};


\draw [fill=white!40, thick] (10.75,2.75) rectangle (25.25,-2.25);

\begin{scope}[shift={(11.25,-1.25)}]
\foreach \x in {0, 0.75, 1.5, 2.25} {
            \draw [fill=orange!55, line width=0.3mm] (\x,0) rectangle (\x+0.75, 0.75);
    }
    
    \foreach \x in {0, 0.75, 1.5} { 
            \draw [fill=orange!55, line width=0.3mm] (\x,0.75) rectangle (\x+0.75, 1.5);
        }
        
         \foreach \x in {0, 0.75} { 
            \draw [fill=orange!55, line width=0.3mm] (\x,1.5) rectangle (\x+0.75, 2.25);
        }
    
    \draw [fill=orange!55, line width=0.3mm] (0,2.25) rectangle (0.75, 3);
\end{scope}


\begin{scope}[shift={(14.75,-1.25)}]
\foreach \x in {0, 0.75, 1.5, 2.25} {
            \draw [fill=orange!55, line width=0.3mm] (\x,0) rectangle (\x+0.75, 0.75);
    }
    
    \foreach \x in {0, 0.75, 1.5} { 
            \draw [fill=orange!55, line width=0.3mm] (\x,0.75) rectangle (\x+0.75, 1.5);
        }
        
         \foreach \x in {0, 0.75} { 
            \draw [fill=orange!55, line width=0.3mm] (\x,1.5) rectangle (\x+0.75, 2.25);
        }

    \end{scope}


\begin{scope}[shift={(18.25,-1.25)}]
\foreach \x in {0, 0.75, 1.5, 2.25} {
            \draw [fill=orange!55, line width=0.3mm] (\x,0) rectangle (\x+0.75, 0.75);
    }
    
    \foreach \x in {0, 0.75, 1.5} { 
            \draw [fill=orange!55, line width=0.3mm] (\x,0.75) rectangle (\x+0.75, 1.5);
        }

    \end{scope}


\begin{scope}[shift={(21.75,-1.25)}]
\foreach \x in {0, 0.75, 1.5, 2.25} {
            \draw [fill=orange!55, line width=0.3mm] (\x,0) rectangle (\x+0.75, 0.75);
    }

    \end{scope}
    
\begin{scope}[shift={(1.2,-2.52)}]
\begin{scope}[rotate around x=0, rotate around y=-35]
  \draw[cube, fill=white!95, opacity=0.75,  draw=none] (0.5,0,0) -- (0.5,6.5-1.03,0) -- (0.5,6.5-1.03,5.8-4.18) -- (0.5,0,5.8-4.18) -- cycle; 
  \end{scope}
\end{scope}

\begin{scope}[shift={(5.465,1.97)}]
\begin{scope}[rotate around x=0, rotate around y=-35, draw=none]
  \draw[cube, fill=white!95, opacity=0.75,  draw=none ] (0.5,0,0) -- (0.5,6.5-4,0) -- (0.5,6.5-4,5.8-1) -- (0.5,0,5.8-1) -- cycle; 
  \end{scope}
\end{scope}

\begin{scope}[shift={(5.78,1.1)}]
\begin{scope}[rotate around x=0, rotate around y=-35]
  \draw[cube, fill=white!95, opacity=0.75, draw=none ] (0.5,0,0) -- (0.5,6.5-5.5,0) -- (0.5,6.5-5.5,5.8-3) -- (0.5,0,5.8-3) -- cycle; 
  \end{scope}
\end{scope}

\begin{scope}[shift={(5.76,-0.35)}]
\begin{scope}[rotate around x=0, rotate around y=-35]
  \draw[cube, fill=white!95, opacity=0.75, draw=none ] (0.5,0,0) -- (0.5,6.5-5.5,0) -- (0.5,6.5-5.5,5.8-4.7) -- (0.5,0,5.8-4.7) -- cycle; 
  \end{scope}
\end{scope}
   

\draw[-, line width=0.5mm] (-0.6,-2.355) -- (0.05,-2.58);
\draw[-, line width=0.5mm, opacity=0.3] (0.05,-2.58) -- (0.4,-2.7);
\draw[->,>=stealth, line width=0.5mm] (0.4,-2.7) -- (2.5,-3.455);    
   
  
  \draw[line width=0.2mm] (1.5,-2.65) -- (1.5,0.35);
  \draw[line width=0.2mm] (1.5,0.35) -- (3.28,1);
  \draw[line width=0.2mm] (3.6,0.1) -- (4.15,0.31);
  \draw[line width=0.2mm] (3.6+1.47,0.1-0.95) -- (4.191+1.47,0.31-0.94);
  \end{scope}
\end{tikzpicture}

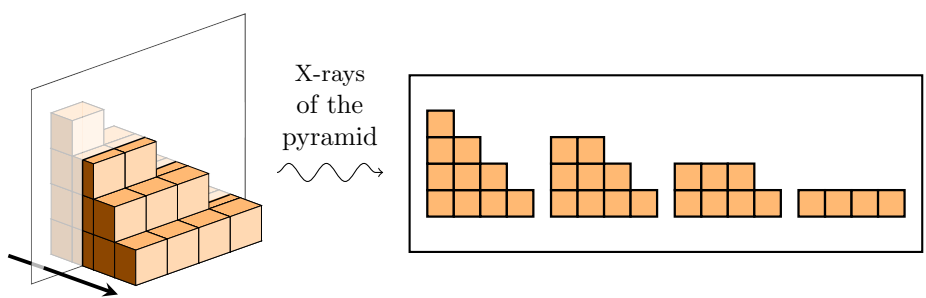
\captionof{figure}{Secondary sections of the pyramid.}\label{SecondarySections}
\vskip .3cm
\restoreparindent  These profile sections will be referred to as the {\em Secondary sections}.

\restoreparindent This observation is meaningful because, since the number of squares must coincide in both types of sections—and therefore with the number of cubes in the original pyramid—we obtain the following identity:
\begin {equation}\label{Algebraic formula}
\begin{aligned}
1^2 +2^2+ 3^2+ \cdots + n^2 &=& 1+2+3+\cdots +n \\
&& \ \ +2+3+\cdots +n \\
&& \ \ \ \ +3+\cdots +n \\
&&\hfill \vdots \ \\
&&\hfill +n
\end{aligned}
\end{equation}

\restoreparindent The visual proof of this identity is precisely given by Figures~\ref{MainSections} and~\ref{SecondarySections}. However, this identity also admits a very simple algebraic proof, and this proof generalises naturally to any dimension. Namely,

\begin{lemma} \label{Algebraic Lemma}
For $p\geq 0$ the following identity holds:
\begin {equation}\label{Algebraic formula for p}
\begin{aligned}
1^{p+1} +2^{p+1}+ 3^{p+1}+ \cdots + n^{p+1} &=& 1^p+2^p+3^p+\cdots +n^p \\
&& \ \ +2^p+3^p+\cdots +n^p \\
&& \ \ \ \ +3^p+\cdots +n^p \\
&&\hfill \vdots \ \ \ \\
&&\hfill +n^p
\end{aligned}
\end{equation}
\end{lemma}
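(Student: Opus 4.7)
The plan is to reorganise the double sum on the right-hand side by counting, for each fixed $j\in\{1,2,\dots,n\}$, the number of rows in which the term $j^p$ appears. Writing the right-hand side of \eqref{Algebraic formula for p} as
\[
R \;=\; \sum_{k=1}^{n}\sum_{j=k}^{n} j^{p},
\]
I would first argue that $j^p$ appears precisely in the rows indexed by $k=1,2,\dots,j$ (since the row starting at $k$ contains $j^p$ iff $k\le j$), so it is counted exactly $j$ times.

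Next, I would swap the order of summation:
\[
R \;=\; \sum_{j=1}^{n}\sum_{k=1}^{j} j^{p} \;=\; \sum_{j=1}^{n} j\cdot j^{p} \;=\; \sum_{j=1}^{n} j^{p+1},
\]
which is exactly the left-hand side of \eqref{Algebraic formula for p}. This completes the argument.

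There is essentially no obstacle here: the proof is a one-line interchange of summation, and the bookkeeping (each $j^p$ appears once in each of the first $j$ rows of the triangular array) matches perfectly the identity $j\cdot j^p=j^{p+1}$. The only thing worth emphasising, for the purposes of the later sections, is that this gives an algebraic counterpart to the \emph{Main sections / Secondary sections} dichotomy: the left-hand side counts unit hypercubes slice by slice as $p$-th powers (Main sections), while the right-hand side counts them along the perpendicular direction as truncated sums of $p$-th powers (Secondary sections), and these two counts must agree.
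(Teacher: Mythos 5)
Your proof is correct and is essentially identical to the paper's: the paper sums the triangular array by columns instead of by rows, which is exactly your interchange of the order of summation, yielding $\sum_{j=1}^{n} j\cdot j^{p}=\sum_{j=1}^{n} j^{p+1}$. The only difference is notational (explicit double sums versus the displayed array), so there is nothing to add.
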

\begin{proof}
The sum on the right-hand side can be computed in an alternative way. Instead of summing the rows sequentially, we may sum the columns and obtain the same result.
\[
\begin{tikzpicture}
    \matrix (m) [matrix of math nodes,
        row sep=.5em, column sep=.1em,
        nodes={anchor=center}] {
        1^p & + & 2^p &+ &3^p & +& \cdots & + & n^p \\
         & + & 2^p &+ &3^p & +& \cdots & + & n^p \\
         & & &+ &3^p & +& \cdots & + & n^p \\
         & &  &  & & &  &  &\vdots \\
         \ \  &  \ \  &  \ \  &  \ \  &  \ \  &  \ \  & \ \  & + & n^p \\
       1\cdot 1^p  & + & 2\cdot 2^p & + & 3\cdot 3^p & + & \cdots & + & n\cdot n^p \\
    };
    \draw[dashed] (m-5-1.south west) -- (m-5-9.south east);
\end{tikzpicture}
\]
\vskip .3cm
\captionof{figure}{Result obtained by summing by columns}
\restoreparindent And since $1\cdot 1^p + 2\cdot 2^p+3\cdot 3^p+\cdots +n\cdot n^p = 1^{p+1}+2^{p+1}+3^{p+1}+\cdots +n^{p+1}$, the lemma follows immediately.
\end{proof}
\begin{remark}
Although this article focuses on visual proofs, we have included this algebraic lemma for several reasons. First, the lemma arises naturally from the technique we use to visualize higher dimensions through sections: the case $p=1$ is precisely the equality between the sum of squares and the sum of truncated triangular numbers. Secondly, the proof of Lemma~\ref{Algebraic Lemma} is immediate once one recognizes the spatial arrangement of its algebraic elements—namely, summing by columns instead of by rows. This naturally raises the question of whether, in the end, algebra is nothing more than a symbolic way of carrying out visual arguments.

\restoreparindent
As mentioned above, identity~\eqref{Algebraic formula for p}, in the case $p=1$, expresses the equality between the sum of squares and the sum of truncated triangular numbers. For $p=2$, the identity becomes the equality between the sum of cubes and the sum of truncated pyramidal numbers. The sums of cubes correspond to the main sections of pyramids of $4$-dimensional hypercubes, whereas the truncated pyramidal numbers arise from their secondary sections. This correspondence will be used in Section~\ref{Nicomachus 4D} to establish a visual proof of Nicomachus’ Theorem in 4~dimensions.
\end{remark}

\subsection{Sum of squares in 2D} \label{Sumin2D}

Now that we have a tool for visualising higher-dimensional objects through their lower-dimensional sections, we revisit the visual proof of identity~\eqref{SumofSquaresDenominator3}. In three dimensions, this proof uses three square-based pyramids, but we will now reinterpret it from the perspective of a Flatland being by examining its planar sections. This first application of 2D sections as a 3D visualisation technique will serve as preparation for the next section, where we will proceed analogously with 3D sections of 4D objects.

To translate the three–dimensional proof from Figures~\ref{Threepyramids} and~\ref{DIY} into two dimensions, we take X–rays of the block in Figure~\ref{Threepyramids}, obtaining a kind of 2D puzzle.

Since three pyramids participate in the construction of the block, the choice of direction for the X–rays determines which pyramids will appear in the 2D puzzle through their secondary sections, and which one through its main sections.

A relevant detail is that, if in the 2D puzzle of Figure \ref{2D of 3D} we omit the pyramid for which the chosen direction produces the main section (the yellow one), the first section of the two remaining pyramids is precisely the visual proof, in one lower dimension, of the formula for the sum of powers. More specifically, in the upper row of X-rays in Figure \ref{2D of 3D}, if we remove the yellow square (the main section), what remains is exactly Figure \ref{Gauss}, consisting of two fitted triangular numbers.

In the remaining sections, if we omit the yellow squares, we obtain truncated triangles that leave an almost square gap, into which the sections of the yellow pyramid fit.
\vskip .5cm

\vskip .3cm
\captionof{figure}{2D X-ray reconstruction of the 3D visual proof}\label{2D of 3D}

\restoreparindent Next, we need to carry out a small geometric “DIY step’’ on our 2D puzzle to turn all the pieces into identical rectangles. To do this, we cut the first row in half and exchange the two halves between the first and last sections, the second and second-to-last sections, and so on.

\restoreparindent The result, shown in Figure~\ref{2D 3D DIY}, is a set of rectangles of base 
$n+1$ and height 
$n+\frac{1}{2}$, matching the length and height of the block obtained after the DIY step in Figure~\ref{DIY}.
Since the X-rays were taken along the remaining dimension, the width of the 3D block corresponds simply to the total number of sections—that is, to the number of rectangles in the 2D puzzle.

Thus, the number of cubes in the three square-based pyramids of height $n$, that is, $3(1^2 + 2^2+ 3^2+\cdots +n^2)$, is precisely 
$n(n+1)(n+\frac{1}{2})$, from which the desired formula follows.

\vskip .5cm
\hspace*{-.6cm}

\vskip .3cm
\captionof{figure}{DIY in the 2D puzzle of 3D visual proof}
\label{2D 3D DIY}

Having sharpened our ability to prove 3-dimensional theorems through 2-dimensional sections, we are now prepared to take the next step: to explore the fourth dimension and prove Nicomachus’ theorem in 4D by examining its 3D sections.

\begin{remark}[Algebra vs. Visual Arguments I]
It is worth pausing at this moment to evaluate the difference between the advantages and disadvantages of an algebraic proof versus a visual argument. In the sections of the block shown in Figure $\ref{2D of 3D}$, we have seen that two truncated triangles leave a gap that is 'almost square'.

\restoreparindent Algebraically, this statement could be written as:
\begin{equation}\label{identity almost square}
2\Bigl( m + (m+1)+\cdots + n\Bigr) + m^2 =(n+1)^2 -(n+1-m), 
\end{equation}
where $1\leq m \leq n$.
\vskip .2cm
\begin{tikzpicture}[scale=0.65]
 \draw[line width=0pt] (0,0) -- (0,0);  
\begin{scope}[shift={(-10,11)}] 
\begin{scope}[shift={(17.5,-14.25)}]
\foreach \x in {0, 0.75, 1.5, 2.25} {
            \draw [fill=blue!55, line width=0.3mm] (\x,0) rectangle (\x+0.75, 0.75);

    }
    
    \foreach \x in {0, 0.75, 1.5} { 
            \draw [fill=blue!55, line width=0.3mm] (\x,0.75) rectangle (\x+0.75, 1.5);

        }
        
         \foreach \x in {0, 0.75} { 
            
            \draw [fill=red!55, line width=0.3mm] (\x+2.25,0.75) rectangle (\x+3, 1.5);
            
              \draw [fill=red!55, line width=0.3mm] (\x +2.25,1.5) rectangle (\x+3, 2.25);
            \draw [fill=red!55, line width=0.3mm] (\x+2.25,2.25) rectangle (\x+3, 3);
        }

    \draw [fill=red!55, line width=0.3mm] (3,0) rectangle (3.75, 0.75);

     \foreach \x in {0, 0.75, 1.5} { 
     \foreach \y in {0, -0.75, -1.5}{
      \draw [fill=orange!55, line width=0.3mm] (\x,\y+3) rectangle (\x+0.75,\y+3.75);
      }
}

\end{scope}

\end{scope}

  \draw[dashed, dash pattern=on 3pt off 2pt] (9.8,0.5) -- (11.25,0.5);
    \draw[dashed, dash pattern=on 3pt off 2pt] (11.25,0.5) -- (11.25,-0.2);
\begin{scope}[shift={(0,1)}] 
\draw[<->]  (11.5, -4.2) -- (11.5,-0.5)  node[midway, right] {\hskip -.25cm $\ \ n+1$};
\draw[<->]  (7.5, -4.5) -- (11.3,-4.5)  node[midway, below] {\hskip -.25cm $\ \ n+1$};
\draw[<->]  (7.5, -0.2) -- (9.7,-0.2)  node[midway, above] {$m$};
\end{scope}
\end{tikzpicture}

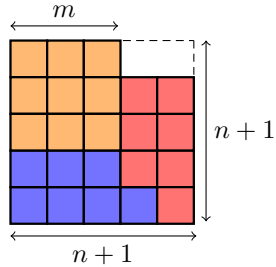
\captionof{figure}{Two truncated triangles leave an almost square gap}
\label{almost square}
\par
\hskip 15pt One can prove this identity using the expression for triangular numbers obtained in Figure $\ref{Gauss}$, that is:
\begin{align*}
 2\Bigl( m + (m+1)+\cdots + n\Bigr) + m^2&=2\Bigl( t_n-t_{m-1}\Bigr) + m^2 \\
 &=2\Bigl(\frac{n(n+1)}{2}-\frac{(m-1)m}{2}\Bigr) + m^2\\
 &=n^2+n+m =(n+1)^2-n-1+m.
\end{align*}
\par

\restoreparindent With this example, we see that the image depicted in Figure $\ref{almost square}$ is identified both with identity ($\ref{identity almost square}$) and with the proof we have seen of it. While the identity and these proofs are not without a certain complexity, the image is clear and straightforward.

\restoreparindent In the following chapters, where we tackle higher dimensions, we will encounter more situations in which visual arguments are very clear and simple, while their algebraic ``translation'' is complex and full of cumbersome notation.
\end{remark}

\section{Nicomachus' 4D Theorem for sum of cubes}
\label{Nicomachus 4D}
\restoreparindent This chapter can be considered the central chapter of this work, as it constitutes the first example in which we apply our sectioning technique to visualize higher dimensions beyond our ability to visualize them directly.

\restoreparindent It is true that the fourth dimension can, with difficulty, be visualized through representations such as the tesseract, as shown in \cite{Sasho}; however, this approach becomes unfeasible in the next step, the fifth dimension.

Thus, this chapter not only presents a clear and concise 4D proof of Nicomachus' Theorem through sections, but also paves the way to visualize the fifth dimension without additional difficulty.

To do so, once Nicomachus' Theorem is established in Section \ref{Nicomachus 4D in 3D}, we apply our sectioning technique once more to the same three-dimensional slices, reducing the construction to a 2D puzzle in the style of Figures \ref{2D of 3D} and \ref{2D 3D DIY} in Section \ref{Sumin2D}.
\vskip 2cm

\subsection{Nicomachus' 4D Theorem through 3D sections}  \label{Nicomachus 4D in 3D}

Our goal in this section is to provide a 4D visual proof of formula (\ref{formula4D}) for the sum of cubes. We begin, therefore, by representing this sum. At this point, one might naturally wonder why all the cubes have been assigned the same color here, in contrast with Figure \ref{ClassicalNicomachus}.
\vskip .2cm
{\Large \ \ \ \ \ \ \ $1^3$\ \ $+$\ \ $2^3$\ \ \ \ $+$\ \ \ \ $3^3$\ \ \ $+$\ \ \ $\cdots$\ \ \ $+$\ \ \ \ \ $n^3$}
\vskip .2cm

\begin{tikzpicture}[scale=0.45]
    \tikzset{cube/.style={very thin,draw=black, opacity=1}}
\begin{scope}[shift={(4,0)}]    
    \begin{scope}[rotate around x=0, rotate around y=-35]
       
        {
                \draw[line width=0pt] (-5,0) -- (-5,0);
                 \draw[cube, fill=green!55] (0,1,0) -- (1,1,0) -- (1,1,1) -- (0,1,1) -- cycle; 
                \draw[cube, fill=green!55!black] (0,0,1) -- (1,0,1) -- (1,1,1) -- (0,1,1) -- cycle; 
                \draw[cube, fill=green!30] (1,0,0) -- (1,1,0) -- (1,1,1) -- (1,0,1) -- cycle; 

        }
    \end{scope}
\end{scope}
\begin{scope}[shift={(7,0)}]
 \begin{scope}[rotate around x=0, rotate around y=-35]
        \foreach \x in {0,1}
        \foreach \y in {0,1}
        \foreach \z in {1,0}
        {
            \begin{scope}[shift={(\x,\y,-\z)}]
                
                 \draw[cube, fill=green!55] (0,1,0) -- (1,1,0) -- (1,1,1) -- (0,1,1) -- cycle; 
                \draw[cube, fill=green!55!black] (0,0,1) -- (1,0,1) -- (1,1,1) -- (0,1,1) -- cycle; 
                \draw[cube, fill=green!30] (1,0,0) -- (1,1,0) -- (1,1,1) -- (1,0,1) -- cycle; 
                
            \end{scope}
        }
    \end{scope}

\end{scope}

\begin{scope}[shift={(12,0)}]
 \begin{scope}[rotate around x=0, rotate around y=-35]
        \foreach \x in {0,1,2}
        \foreach \y in {0,1,2}
        \foreach \z in {2,1,0}
        {
            \begin{scope}[shift={(\x,\y,-\z)}]
                
                 \draw[cube, fill=green!55] (0,1,0) -- (1,1,0) -- (1,1,1) -- (0,1,1) -- cycle; 
                \draw[cube, fill=green!55!black] (0,0,1) -- (1,0,1) -- (1,1,1) -- (0,1,1) -- cycle; 
                \draw[cube, fill=green!30] (1,0,0) -- (1,1,0) -- (1,1,1) -- (1,0,1) -- cycle; 
                
            \end{scope}
        }
    \end{scope}

\end{scope}

\begin{scope}[shift={(19,0)}]
 \begin{scope}[rotate around x=0, rotate around y=-35]
        \foreach \x in {0,1,2,3}
        \foreach \y in {0,1,2,3}
        \foreach \z in {3,2,1,0}
        {
            \begin{scope}[shift={(\x,\y,-\z)}]
                
                 \draw[cube, fill=green!55] (0,1,0) -- (1,1,0) -- (1,1,1) -- (0,1,1) -- cycle; 
                \draw[cube, fill=green!55!black] (0,0,1) -- (1,0,1) -- (1,1,1) -- (0,1,1) -- cycle; 
                \draw[cube, fill=green!30] (1,0,0) -- (1,1,0) -- (1,1,1) -- (1,0,1) -- cycle; 
                
            \end{scope}
        }
    \end{scope}

\end{scope}

\draw[<->] (24.5,0) -- (24.5,4); \node[right] at (24.5, 2) {$n$};
\end{tikzpicture}
\vskip .3cm

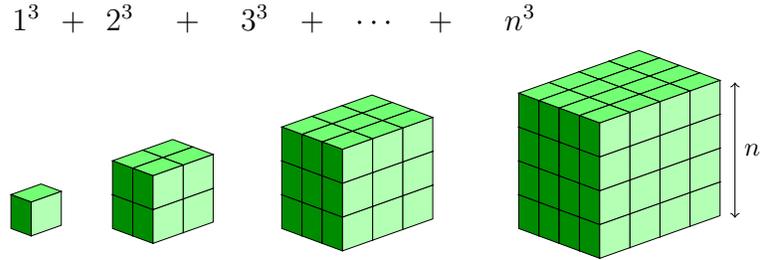
\captionof{figure}{Main sections of a 4D pyramid}
\label{Main sections 4D pyramid}

\restoreparindent A reader familiar with the strategy developed in the previous sections (or who has examined the caption of Figure \ref{Main sections 4D pyramid}) will recognize this configuration as the collection of main three-dimensional sections of a four-dimensional pyramid.

\restoreparindent In Figure \ref{ClassicalNicomachus}, the cubes were independent units that we decomposed into smaller parts to form a square tapestry. In the present setting, however, the 
$n$ green cubes constitute a single object: a pyramid in four dimensions. This is the reason for depicting them in a uniform color.

\restoreparindent As we observed in Section \ref{LowerSections}, there is more than one way to obtain the sections of a pyramid. Setting $p=2$ in Lemma \ref{Algebraic Lemma}, the left-hand side of the identity corresponds to the main sections considered above, whereas the right-hand side describes the secondary sections, which in this case take the form of truncated pyramids.

\begin {equation}\label{Algebraic formula for 3}
\begin{aligned}
1^{3} +2^{3}+ 3^{3}+ \cdots + n^{3} &=& 1^2+2^2+3^2+\cdots +n^2 \\
&& \ \ +2^2+3^2+\cdots +n^2 \\
&& \ \ \ \ +3^2+\cdots +n^2 \\
&&\hfill \vdots \ \ \ \\
&&\hfill +n^2
\end{aligned}
\end{equation}

\begin{tikzpicture}[scale=0.45]
    \tikzset{cube/.style={very thin,draw=black, opacity=1}}

\begin{scope}[shift={(0,-3)}]
 \begin{scope}[rotate around x=0, rotate around y=-35]
        \foreach \x in {0,1,2,3}
        \foreach \y in {0}
        \foreach \z in {3,2,1,0}
        {
            \begin{scope}[shift={(\x,\y,-\z)}]
                   
                 \draw[cube, fill=orange!55] (0,1,0) -- (1,1,0) -- (1,1,1) -- (0,1,1) -- cycle; 
                \draw[cube, fill=orange!55!black] (0,0,1) -- (1,0,1) -- (1,1,1) -- (0,1,1) -- cycle; 
                \draw[cube, fill=orange!30] (1,0,0) -- (1,1,0) -- (1,1,1) -- (1,0,1) -- cycle; 
                
            \end{scope}
        }
    \end{scope}

\end{scope}

\begin{scope}[shift={(0,-2)}]
 \begin{scope}[rotate around x=0, rotate around y=-35]
        \foreach \x in {0,1,2}
        \foreach \y in {0}
        \foreach \z in {2,1,0}
        {
            \begin{scope}[shift={(\x,\y,-\z)}]
                            
                 \draw[cube, fill=orange!55] (0,1,0) -- (1,1,0) -- (1,1,1) -- (0,1,1) -- cycle; 
                \draw[cube, fill=orange!55!black] (0,0,1) -- (1,0,1) -- (1,1,1) -- (0,1,1) -- cycle; 
                \draw[cube, fill=orange!30] (1,0,0) -- (1,1,0) -- (1,1,1) -- (1,0,1) -- cycle; 
                
            \end{scope}
        }
    \end{scope}

\end{scope}

\begin{scope}[shift={(0,-1)}]
 \begin{scope}[rotate around x=0, rotate around y=-35]
        \foreach \x in {0,1}
        \foreach \y in {0}
        \foreach \z in {1,0}
        {
            \begin{scope}[shift={(\x,\y,-\z)}]

                 \draw[cube, fill=orange!55] (0,1,0) -- (1,1,0) -- (1,1,1) -- (0,1,1) -- cycle; 
              \draw[cube, fill=orange!55!black] (0,0,1) -- (1,0,1) -- (1,1,1) -- (0,1,1) -- cycle; 
                \draw[cube, fill=orange!30] (1,0,0) -- (1,1,0) -- (1,1,1) -- (1,0,1) -- cycle; 
                
            \end{scope}
        }
    \end{scope}

\end{scope}

 \begin{scope}[rotate around x=0, rotate around y=-35]
       
        {

                 \draw[cube, fill=orange!55] (0,1,0) -- (1,1,0) -- (1,1,1) -- (0,1,1) -- cycle; 
                \draw[cube, fill=orange!55!black] (0,0,1) -- (1,0,1) -- (1,1,1) -- (0,1,1) -- cycle; 
                \draw[cube, fill=orange!30] (1,0,0) -- (1,1,0) -- (1,1,1) -- (1,0,1) -- cycle; 

        }
    \end{scope}

\draw[<->] (-1.25,0.7) -- (-1.25,-3.3); \node[right] at (-2.25, -1.5) {$n$};


\begin{scope}[shift={(5.5,2)}]

\begin{scope}[shift={(0,-3)}]
 \begin{scope}[rotate around x=0, rotate around y=-35]
        \foreach \x in {0,1,2,3}
        \foreach \y in {0}
        \foreach \z in {3,2,1,0}
        {
            \begin{scope}[shift={(\x,\y,-\z)}]
                   
                 \draw[cube, fill=orange!55] (0,1,0) -- (1,1,0) -- (1,1,1) -- (0,1,1) -- cycle; 
                \draw[cube, fill=orange!55!black] (0,0,1) -- (1,0,1) -- (1,1,1) -- (0,1,1) -- cycle; 
                \draw[cube, fill=orange!30] (1,0,0) -- (1,1,0) -- (1,1,1) -- (1,0,1) -- cycle; 
                
            \end{scope}
        }
    \end{scope}

\end{scope}

\begin{scope}[shift={(0,-2)}]
 \begin{scope}[rotate around x=0, rotate around y=-35]
        \foreach \x in {0,1,2}
        \foreach \y in {0}
        \foreach \z in {2,1,0}
        {
            \begin{scope}[shift={(\x,\y,-\z)}]
                            
                 \draw[cube, fill=orange!55] (0,1,0) -- (1,1,0) -- (1,1,1) -- (0,1,1) -- cycle; 
                \draw[cube, fill=orange!55!black] (0,0,1) -- (1,0,1) -- (1,1,1) -- (0,1,1) -- cycle; 
                \draw[cube, fill=orange!30] (1,0,0) -- (1,1,0) -- (1,1,1) -- (1,0,1) -- cycle; 
                
            \end{scope}
        }
    \end{scope}

\end{scope}

\begin{scope}[shift={(0,-1)}]
 \begin{scope}[rotate around x=0, rotate around y=-35]
        \foreach \x in {0,1}
        \foreach \y in {0}
        \foreach \z in {1,0}
        {
            \begin{scope}[shift={(\x,\y,-\z)}]

                 \draw[cube, fill=orange!55] (0,1,0) -- (1,1,0) -- (1,1,1) -- (0,1,1) -- cycle; 
              \draw[cube, fill=orange!55!black] (0,0,1) -- (1,0,1) -- (1,1,1) -- (0,1,1) -- cycle; 
                \draw[cube, fill=orange!30] (1,0,0) -- (1,1,0) -- (1,1,1) -- (1,0,1) -- cycle; 
                
            \end{scope}
        }
    \end{scope}

\end{scope}

 \end{scope}


\begin{scope}[shift={(11,4)}]

\begin{scope}[shift={(0,-3)}]
 \begin{scope}[rotate around x=0, rotate around y=-35]
        \foreach \x in {0,1,2,3}
        \foreach \y in {0}
        \foreach \z in {3,2,1,0}
        {
            \begin{scope}[shift={(\x,\y,-\z)}]
                   
                 \draw[cube, fill=orange!55] (0,1,0) -- (1,1,0) -- (1,1,1) -- (0,1,1) -- cycle; 
                \draw[cube, fill=orange!55!black] (0,0,1) -- (1,0,1) -- (1,1,1) -- (0,1,1) -- cycle; 
                \draw[cube, fill=orange!30] (1,0,0) -- (1,1,0) -- (1,1,1) -- (1,0,1) -- cycle; 
                
            \end{scope}
        }
    \end{scope}

\end{scope}

\begin{scope}[shift={(0,-2)}]
 \begin{scope}[rotate around x=0, rotate around y=-35]
        \foreach \x in {0,1,2}
        \foreach \y in {0}
        \foreach \z in {2,1,0}
        {
            \begin{scope}[shift={(\x,\y,-\z)}]
                            
                 \draw[cube, fill=orange!55] (0,1,0) -- (1,1,0) -- (1,1,1) -- (0,1,1) -- cycle; 
                \draw[cube, fill=orange!55!black] (0,0,1) -- (1,0,1) -- (1,1,1) -- (0,1,1) -- cycle; 
                \draw[cube, fill=orange!30] (1,0,0) -- (1,1,0) -- (1,1,1) -- (1,0,1) -- cycle; 
                
            \end{scope}
        }
    \end{scope}

\end{scope}

 \end{scope}


\begin{scope}[shift={(16.5,6)}]

\begin{scope}[shift={(0,-3)}]
 \begin{scope}[rotate around x=0, rotate around y=-35]
        \foreach \x in {0,1,2,3}
        \foreach \y in {0}
        \foreach \z in {3,2,1,0}
        {
            \begin{scope}[shift={(\x,\y,-\z)}]
                   
                 \draw[cube, fill=orange!55] (0,1,0) -- (1,1,0) -- (1,1,1) -- (0,1,1) -- cycle; 
                \draw[cube, fill=orange!55!black] (0,0,1) -- (1,0,1) -- (1,1,1) -- (0,1,1) -- cycle; 
                \draw[cube, fill=orange!30] (1,0,0) -- (1,1,0) -- (1,1,1) -- (1,0,1) -- cycle; 
                
            \end{scope}
        }
    \end{scope}

\end{scope}

\end{scope}
\end{tikzpicture}
\captionof{figure}{Secondary sections of a 4D pyramid}
\label{Secondary sections 4D pyramid}

\restoreparindent If we examine the X-rays of the block formed by the three assembled three-dimensional pyramids (see Figure \ref{2D of 3D}), we observe that two of them give rise to secondary sections in the X-rays; that is, truncated triangles that fit together, where the first of these sections coincides with the visual proof in the lower dimension (see Figure \ref{Gauss}).

\restoreparindent The third pyramid, in contrast, fits into the ‘almost’ square gaps through its main sections, which in this case are squares.

Similarly, four hypercube pyramids can be assembled so that, when taking their three-dimensional X-rays, three of them appear as secondary sections—namely, truncated pyramids—whose first section coincides with the visual proof of the previous dimension, that is, Figure \ref{Threepyramids}.\par
\vspace{5mm}
\noindent

\vskip .3cm
\captionof{figure}{Secondary sections of three assembled 4D pyramids}
\label{3 pyramids 4D}

\restoreparindent In the ‘almost cubic’ gap left by the secondary sections of the three assembled 4D pyramids fits the fourth 4D pyramid, which appears precisely through its main sections—namely, cubes (see Figure \ref{Main sections 4D pyramid}).

Thus, the three-dimensional sections of the four assembled 4D pyramids produce $n$ imperfect blocks that once again require some DIY work. In this case, however, formula (\ref{formula4D}) tells us that the small cubes do not need to be cut, only rearranged.

Examining these imperfect blocks, we observe that each of them contains an excess region and a complementary deficit. More precisely, in the last three-dimensional section there is an 
$n\times n$ square slab protruding from the block
($4\times 4$ in Figure 6 when $n=4$); in the preceding section, the protruding slab has size 
$(n-1)\times (n-1)$; and this pattern continues down to the second section, with a $2\times 2$ slab, and the first section, where the excess consists of a single green square.\par
\vspace{5mm}
\noindent

\vskip .3cm
\captionof{figure}{3D sections of four assembled 4D pyramids}
\label{4 pyramids 4D}

\restoreparindent However, as shown in Figure \ref{DIY Removing}, once these slabs are separated from the blocks, the region that must be filled is exactly a triangular gap in the first section, which gradually truncates until it becomes a linear gap of length 
$n$ in the last section.

\restoreparindent As we know from Lemma \ref{Algebraic Lemma} (and, in particular, from identity (\ref{Algebraic formula})), the sum of squares equals the sum of truncated triangular numbers. Consequently, the protruding squares fill the corresponding gaps, producing 
$n$ identical blocks, as shown in Figure \ref{DIY Filling}.\par
\vspace{5mm}
\noindent

\vskip .3cm
\captionof{figure}{DIY: Removing the Excess Slabs from the Blocks}
\label{DIY Removing}

\restoreparindent The dimensions of the resulting identical blocks are: $n+1$ in length, $n$ in width, and $n+1$ in height. Moreover, there are exactly $n$ such blocks, one for each section we considered. Hence, the total number of unitary cubes is $n^2 (n+1)^2$.

 \restoreparindent Recall that we started with four pyramids of hypercubes whose three-dimensional sections contained a total of $4(1^3+ 2^3+3^3+ \cdots +n^3)$ unitary cubes. Consequently, we obtain the desired identity:
$$1^3+ 2^3+3^3+ \cdots +n^3=\frac{n^2 (n+1)^2}{4},$$
which completes the proof of the Nicomachus’ 4D Theorem.\par
\vspace{5mm}
\noindent

\vskip .3cm
\captionof{figure}{DIY: Filling the Gaps of the Blocks}
\label{DIY Filling}

\subsection{Nicomachus' 4D Theorem in 2D sections}

Although we are three-dimensional beings capable of visualizing in 3D, we must acknowledge that this dimension presents certain difficulties.

\restoreparindent Indeed, the visual proof in Figures~\ref{Threepyramids} and \ref{DIY} for the sum of squares, which takes place in 3D, is not without challenges. The pyramids and the block they form have a rear part that we do not see and must, in some way, imagine.

\restoreparindent This is why the same proof, viewed in its 2D sections in Figures~\ref{2D of 3D} and ~\ref{2D 3D DIY}, becomes much clearer. As three-dimensional beings, what we truly excel at is 2D visualization.

\restoreparindent This poses no problem, since the technique of sections allows us to translate 4D proofs into their 3D sections, each of which is itself a three-dimensional figure that can be studied through 2D sections.

\restoreparindent Therefore, we will devote this subchapter to translating the proof of Nicomachus’ 4D Theorem into a 2D visual proof, which will also serve as preparation for making the leap to 5D.\par
\vspace{5mm}
\noindent
\begin{tikzpicture}[scale=0.5]




\begin{scope}[shift={(0,0)}]

\begin{scope}[shift={(17.5,-4.25)}]
\foreach \x in {0, 0.75, 1.5, 2.25} {
            \draw [fill=blue!55, line width=0.3mm] (\x,0) rectangle (\x+0.75, 0.75);
            
            \draw [fill=red!55, line width=0.3mm] (\x+0.75,2.25) rectangle (\x+1.5, 3);
       
    }
    
    \foreach \x in {0, 0.75, 1.5} { 
            \draw [fill=blue!55, line width=0.3mm] (\x,0.75) rectangle (\x+0.75, 1.5);
            
            \draw [fill=red!55, line width=0.3mm] (\x +1.5,1.5) rectangle (\x+2.25, 2.25);
        }
        
         \foreach \x in {0, 0.75} { 
            \draw [fill=blue!55, line width=0.3mm] (\x,1.5) rectangle (\x+0.75, 2.25);
            \draw [fill=red!55, line width=0.3mm] (\x+2.25,0.75) rectangle (\x+3, 1.5);
        }
    
    \draw [fill=blue!55, line width=0.3mm] (0,2.25) rectangle (0.75, 3);
    
    \draw [fill=red!55, line width=0.3mm] (3,0) rectangle (3.75, 0.75);

    
      \draw [fill=orange!55, line width=0.3mm] (0,3) rectangle (0.75, 3.75);

\end{scope}

\begin{scope}[shift={(17.5,-10.75)}]
\foreach \x in {0, 0.75, 1.5, 2.25} {
            \draw [fill=blue!55, line width=0.3mm] (\x,0) rectangle (\x+0.75, 0.75);

    }
    
    \foreach \x in {0, 0.75, 1.5} { 
            \draw [fill=blue!55, line width=0.3mm] (\x,0.75) rectangle (\x+0.75, 1.5);
            
            \draw [fill=red!55, line width=0.3mm] (\x +1.5,1.5) rectangle (\x+2.25, 2.25);
            \draw [fill=red!55, line width=0.3mm] (\x+1.5,2.25) rectangle (\x+2.25, 3);
        }
        
         \foreach \x in {0, 0.75} { 
            \draw [fill=blue!55, line width=0.3mm] (\x,1.5) rectangle (\x+0.75, 2.25);
            \draw [fill=red!55, line width=0.3mm] (\x+2.25,0.75) rectangle (\x+3, 1.5);
        }

    \draw [fill=red!55, line width=0.3mm] (3,0) rectangle (3.75, 0.75);

     \foreach \x in {0, 0.75} { 
     \foreach \y in {0, -0.75}{
      \draw [fill=orange!55, line width=0.3mm] (\x,\y+3) rectangle (\x+0.75,\y+3.75);
      }
}

\end{scope}

\begin{scope}[shift={(17.5,-17.25)}]
\foreach \x in {0, 0.75, 1.5, 2.25} {
            \draw [fill=blue!55, line width=0.3mm] (\x,0) rectangle (\x+0.75, 0.75);

    }
    
    \foreach \x in {0, 0.75, 1.5} { 
            \draw [fill=blue!55, line width=0.3mm] (\x,0.75) rectangle (\x+0.75, 1.5);

        }
        
         \foreach \x in {0, 0.75} { 
            
            \draw [fill=red!55, line width=0.3mm] (\x+2.25,0.75) rectangle (\x+3, 1.5);
            
              \draw [fill=red!55, line width=0.3mm] (\x +2.25,1.5) rectangle (\x+3, 2.25);
            \draw [fill=red!55, line width=0.3mm] (\x+2.25,2.25) rectangle (\x+3, 3);
        }

    \draw [fill=red!55, line width=0.3mm] (3,0) rectangle (3.75, 0.75);

     \foreach \x in {0, 0.75, 1.5} { 
     \foreach \y in {0, -0.75, -1.5}{
      \draw [fill=orange!55, line width=0.3mm] (\x,\y+3) rectangle (\x+0.75,\y+3.75);
      }
}

\end{scope}

\begin{scope}[shift={(17.5,-23.75)}]
\foreach \x in {0, 0.75, 1.5, 2.25} {
            \draw [fill=blue!55, line width=0.3mm] (\x,0) rectangle (\x+0.75, 0.75);

    }

         \foreach \x in {0.75} { 
            
            \draw [fill=red!55, line width=0.3mm] (\x+2.25,0.75) rectangle (\x+3, 1.5);
            
              \draw [fill=red!55, line width=0.3mm] (\x +2.25,1.5) rectangle (\x+3, 2.25);
            \draw [fill=red!55, line width=0.3mm] (\x+2.25,2.25) rectangle (\x+3, 3);
        }

    \draw [fill=red!55, line width=0.3mm] (3,0) rectangle (3.75, 0.75);

     \foreach \x in {0, 0.75, 1.5, 2.25} { 
     \foreach \y in {0, -0.75, -1.5, -2.25}{
      \draw [fill=orange!55, line width=0.3mm] (\x,\y+3) rectangle (\x+0.75,\y+3.75);
      }
}

\end{scope}

\end{scope}


\begin{scope}[shift={(7,0)}]

\begin{scope}[shift={(17.5,-4.25)}]
\foreach \x in {0, 0.75, 1.5, 2.25} {
            \draw [fill=blue!55, line width=0.3mm] (\x,0) rectangle (\x+0.75, 0.75);

    }
    
    \foreach \x in {0, 0.75, 1.5} { 
            \draw [fill=blue!55, line width=0.3mm] (\x,0.75) rectangle (\x+0.75, 1.5);
            
            \draw [fill=red!55, line width=0.3mm] (\x +1.5,1.5) rectangle (\x+2.25, 2.25);
            
             \draw [fill=red!55, line width=0.3mm] (\x+1.5,2.25) rectangle (\x+2.25, 3);
        }
        
         \foreach \x in {0, 0.75} { 
            \draw [fill=blue!55, line width=0.3mm] (\x,1.5) rectangle (\x+0.75, 2.25);
            \draw [fill=red!55, line width=0.3mm] (\x+2.25,0.75) rectangle (\x+3, 1.5);
        }
    
    
    \draw [fill=red!55, line width=0.3mm] (3,0) rectangle (3.75, 0.75);

    

\end{scope}

\begin{scope}[shift={(17.5,-10.75)}]
\foreach \x in {0, 0.75, 1.5, 2.25} {
            \draw [fill=blue!55, line width=0.3mm] (\x,0) rectangle (\x+0.75, 0.75);

    }
    
    \foreach \x in {0, 0.75, 1.5} { 
            \draw [fill=blue!55, line width=0.3mm] (\x,0.75) rectangle (\x+0.75, 1.5);
            
            \draw [fill=red!55, line width=0.3mm] (\x +1.5,1.5) rectangle (\x+2.25, 2.25);
            \draw [fill=red!55, line width=0.3mm] (\x+1.5,2.25) rectangle (\x+2.25, 3);
        }
        
         \foreach \x in {0, 0.75} { 
            \draw [fill=blue!55, line width=0.3mm] (\x,1.5) rectangle (\x+0.75, 2.25);
            \draw [fill=red!55, line width=0.3mm] (\x+2.25,0.75) rectangle (\x+3, 1.5);
        }

    \draw [fill=red!55, line width=0.3mm] (3,0) rectangle (3.75, 0.75);

     \foreach \x in {0, 0.75} { 
     \foreach \y in {0, -0.75}{
      \draw [fill=orange!55, line width=0.3mm] (\x,\y+3) rectangle (\x+0.75,\y+3.75);
      }
}

\end{scope}

\begin{scope}[shift={(17.5,-17.25)}]
\foreach \x in {0, 0.75, 1.5, 2.25} {
            \draw [fill=blue!55, line width=0.3mm] (\x,0) rectangle (\x+0.75, 0.75);

    }
    
    \foreach \x in {0, 0.75, 1.5} { 
            \draw [fill=blue!55, line width=0.3mm] (\x,0.75) rectangle (\x+0.75, 1.5);

        }
        
         \foreach \x in {0, 0.75} { 
            
            \draw [fill=red!55, line width=0.3mm] (\x+2.25,0.75) rectangle (\x+3, 1.5);
            
              \draw [fill=red!55, line width=0.3mm] (\x +2.25,1.5) rectangle (\x+3, 2.25);
            \draw [fill=red!55, line width=0.3mm] (\x+2.25,2.25) rectangle (\x+3, 3);
        }

    \draw [fill=red!55, line width=0.3mm] (3,0) rectangle (3.75, 0.75);

     \foreach \x in {0, 0.75, 1.5} { 
     \foreach \y in {0, -0.75, -1.5}{
      \draw [fill=orange!55, line width=0.3mm] (\x,\y+3) rectangle (\x+0.75,\y+3.75);
      }
}

\end{scope}

\begin{scope}[shift={(17.5,-23.75)}]
\foreach \x in {0, 0.75, 1.5, 2.25} {
            \draw [fill=blue!55, line width=0.3mm] (\x,0) rectangle (\x+0.75, 0.75);

    }

         \foreach \x in {0.75} { 
            
            \draw [fill=red!55, line width=0.3mm] (\x+2.25,0.75) rectangle (\x+3, 1.5);
            
              \draw [fill=red!55, line width=0.3mm] (\x +2.25,1.5) rectangle (\x+3, 2.25);
            \draw [fill=red!55, line width=0.3mm] (\x+2.25,2.25) rectangle (\x+3, 3);
        }

    \draw [fill=red!55, line width=0.3mm] (3,0) rectangle (3.75, 0.75);

     \foreach \x in {0, 0.75, 1.5, 2.25} { 
     \foreach \y in {0, -0.75, -1.5, -2.25}{
      \draw [fill=orange!55, line width=0.3mm] (\x,\y+3) rectangle (\x+0.75,\y+3.75);
      }
}

\end{scope}

\end{scope}


\begin{scope}[shift={(14,0)}]


\begin{scope}[shift={(17.5,-4.25)}]
\foreach \x in {0, 0.75, 1.5, 2.25} {
            \draw [fill=blue!55, line width=0.3mm] (\x,0) rectangle (\x+0.75, 0.75);

   }
    
    \foreach \x in {0, 0.75, 1.5} { 
            \draw [fill=blue!55, line width=0.3mm] (\x,0.75) rectangle (\x+0.75, 1.5);

        }
        
         \foreach \x in {0, 0.75} { 
         
          \draw [fill=red!55, line width=0.3mm] (\x +2.25,1.5) rectangle (\x+3, 2.25);
            
             \draw [fill=red!55, line width=0.3mm] (\x+2.25,2.25) rectangle (\x+3, 3);
            \draw [fill=red!55, line width=0.3mm] (\x+2.25,0.75) rectangle (\x+3, 1.5);
        }
    
    
    \draw [fill=red!55, line width=0.3mm] (3,0) rectangle (3.75, 0.75);

    

\end{scope}

\begin{scope}[shift={(17.5,-10.75)}]
\foreach \x in {0, 0.75, 1.5, 2.25} {
            \draw [fill=blue!55, line width=0.3mm] (\x,0) rectangle (\x+0.75, 0.75);

    }
    
    \foreach \x in {0, 0.75, 1.5} { 
            \draw [fill=blue!55, line width=0.3mm] (\x,0.75) rectangle (\x+0.75, 1.5);

        }
        
         \foreach \x in {0, 0.75} { 
         
  \draw [fill=red!55, line width=0.3mm] (\x +2.25,1.5) rectangle (\x+3, 2.25);
            \draw [fill=red!55, line width=0.3mm] (\x+2.25,2.25) rectangle (\x+3, 3);         
         
            \draw [fill=red!55, line width=0.3mm] (\x+2.25,0.75) rectangle (\x+3, 1.5);
        }

    \draw [fill=red!55, line width=0.3mm] (3,0) rectangle (3.75, 0.75);

\end{scope}

\begin{scope}[shift={(17.5,-17.25)}]
\foreach \x in {0, 0.75, 1.5, 2.25} {
            \draw [fill=blue!55, line width=0.3mm] (\x,0) rectangle (\x+0.75, 0.75);

    }
    
    \foreach \x in {0, 0.75, 1.5} { 
            \draw [fill=blue!55, line width=0.3mm] (\x,0.75) rectangle (\x+0.75, 1.5);

        }
        
         \foreach \x in {0, 0.75} { 
            
            \draw [fill=red!55, line width=0.3mm] (\x+2.25,0.75) rectangle (\x+3, 1.5);
            
              \draw [fill=red!55, line width=0.3mm] (\x +2.25,1.5) rectangle (\x+3, 2.25);
            \draw [fill=red!55, line width=0.3mm] (\x+2.25,2.25) rectangle (\x+3, 3);
        }

    \draw [fill=red!55, line width=0.3mm] (3,0) rectangle (3.75, 0.75);

     \foreach \x in {0, 0.75, 1.5} { 
     \foreach \y in {0, -0.75, -1.5}{
      \draw [fill=orange!55, line width=0.3mm] (\x,\y+3) rectangle (\x+0.75,\y+3.75);
      }
}

\end{scope}

\begin{scope}[shift={(17.5,-23.75)}]
\foreach \x in {0, 0.75, 1.5, 2.25} {
            \draw [fill=blue!55, line width=0.3mm] (\x,0) rectangle (\x+0.75, 0.75);

    }

         \foreach \x in {0.75} { 
            
            \draw [fill=red!55, line width=0.3mm] (\x+2.25,0.75) rectangle (\x+3, 1.5);
            
              \draw [fill=red!55, line width=0.3mm] (\x +2.25,1.5) rectangle (\x+3, 2.25);
            \draw [fill=red!55, line width=0.3mm] (\x+2.25,2.25) rectangle (\x+3, 3);
        }

    \draw [fill=red!55, line width=0.3mm] (3,0) rectangle (3.75, 0.75);

     \foreach \x in {0, 0.75, 1.5, 2.25} { 
     \foreach \y in {0, -0.75, -1.5, -2.25}{
      \draw [fill=orange!55, line width=0.3mm] (\x,\y+3) rectangle (\x+0.75,\y+3.75);
      }
}

\end{scope}

\end{scope}


\begin{scope}[shift={(21,0)}]


\begin{scope}[shift={(17.5,-4.25)}]
\foreach \x in {0, 0.75, 1.5, 2.25} {
            \draw [fill=blue!55, line width=0.3mm] (\x,0) rectangle (\x+0.75, 0.75);

   }
    
    \foreach \x in {0, 0.75, 1.5} { 

        }

         \draw [fill=red!55, line width=0.3mm] (3,1.5) rectangle (3.75, 2.25);
            
             \draw [fill=red!55, line width=0.3mm] (3,2.25) rectangle (3.75, 3);
            
            \draw [fill=red!55, line width=0.3mm] (3,0.75) rectangle (3.75, 1.5);
 
    \draw [fill=red!55, line width=0.3mm] (3,0) rectangle (3.75, 0.75);

    

\end{scope}

\begin{scope}[shift={(17.5,-10.75)}]
\foreach \x in {0, 0.75, 1.5, 2.25} {
            \draw [fill=blue!55, line width=0.3mm] (\x,0) rectangle (\x+0.75, 0.75);

    }

      \draw [fill=red!55, line width=0.3mm] (3,1.5) rectangle (3.75, 2.25);
            \draw [fill=red!55, line width=0.3mm] (3,2.25) rectangle (3.75, 3);

            \draw [fill=red!55, line width=0.3mm] (3,0.75) rectangle (3.75, 1.5);

    \draw [fill=red!55, line width=0.3mm] (3,0) rectangle (3.75, 0.75);

\end{scope}

\begin{scope}[shift={(17.5,-17.25)}]
\foreach \x in {0, 0.75, 1.5, 2.25} {
            \draw [fill=blue!55, line width=0.3mm] (\x,0) rectangle (\x+0.75, 0.75);

    }

       \draw [fill=red!55, line width=0.3mm] (3,0.75) rectangle (3.75, 1.5);
            
              \draw [fill=red!55, line width=0.3mm] (3,1.5) rectangle (3.75, 2.25);
            \draw [fill=red!55, line width=0.3mm] (3,2.25) rectangle (3.75, 3);

    \draw [fill=red!55, line width=0.3mm] (3,0) rectangle (3.75, 0.75);

\end{scope}

\begin{scope}[shift={(17.5,-23.75)}]
\foreach \x in {0, 0.75, 1.5, 2.25} {
            \draw [fill=blue!55, line width=0.3mm] (\x,0) rectangle (\x+0.75, 0.75);

    }

         \foreach \x in {0.75} { 
            
            \draw [fill=red!55, line width=0.3mm] (\x+2.25,0.75) rectangle (\x+3, 1.5);
            
              \draw [fill=red!55, line width=0.3mm] (\x +2.25,1.5) rectangle (\x+3, 2.25);
            \draw [fill=red!55, line width=0.3mm] (\x+2.25,2.25) rectangle (\x+3, 3);
        }

    \draw [fill=red!55, line width=0.3mm] (3,0) rectangle (3.75, 0.75);

     \foreach \x in {0, 0.75, 1.5, 2.25} { 
     \foreach \y in {0, -0.75, -1.5, -2.25}{
      \draw [fill=orange!55, line width=0.3mm] (\x,\y+3) rectangle (\x+0.75,\y+3.75);
      }
}

\end{scope}

\end{scope}

\end{tikzpicture}
\vskip .8cm
\captionof{figure}{2D sections of Figure $\ref{3 pyramids 4D}$}
\label{2D section Nicomachus 4D}
\vskip .2cm

 \restoreparindent If we take a 2D X-ray of each of the 3D sections of the three 4D pyramids assembled as in Figure~\ref{3 pyramids 4D}, we obtain $n$ columns of 2D puzzles (in Figure~\ref{2D section Nicomachus 4D} there are four columns because in our illustration $n=4$), one column for each 3D section. Note that the first column is exactly the X-ray of Figure~\ref{2D of 3D}, since we have X-rayed the 3D block corresponding to the first 3D section of the assembled 4D pyramids, and this first X-ray always coincides with the visual proof in the lower dimension.

\restoreparindent We observe that in the second column the $1\times 1$ square and the step of length $1$ from the triangles in the first column have been removed; in the third column, the $2\times 2$ squares and the steps of length $2$ are removed as well, and this pattern continues until the last section, where all squares up to $(n-1)\times(n-1)$ and all the steps up to length $n-1$ have been removed.

\restoreparindent It is precisely in the gaps that are created that the fourth 4D pyramid fits (see Figure~\ref{4 pyramids 4D}). In Figure~\ref{2D section Nicomachus 4D Bis}, we see that, in addition to the $n$ columns (four in our illustration) corresponding to the 2D sections of the 4D blocks, there is an extra row. This extra row is precisely the part that protruded from the 3D blocks.\par
\vspace{5mm}
\noindent
\begin{tikzpicture}[scale=0.5]


\begin{scope}[shift={(20.3,7.2)}]
 \foreach \x in {0} { 
  \foreach \y in {0} { 
\begin{scope}[shift={(17.5+\x, -4.5-\y)}] 
   
\draw[<->] (0.65,0) -- (3.75,0) node[midway, below, yshift=0.5mm] {$n$};
\draw[<->] (4.03,0.2) -- (4.03,3.3) node[midway, right, xshift=-0.5mm] {$n$};

\end{scope}

}
}
\end{scope}




\begin{scope}[shift={(0,0)}]

\begin{scope}[shift={(17.5,-4.25)}]
\foreach \x in {0, 0.75, 1.5, 2.25} {
            \draw [fill=blue!55, line width=0.3mm] (\x,0) rectangle (\x+0.75, 0.75);
            
            \draw [fill=red!55, line width=0.3mm] (\x+0.75,2.25) rectangle (\x+1.5, 3);
       
    }
    
    \foreach \x in {0, 0.75, 1.5} { 
            \draw [fill=blue!55, line width=0.3mm] (\x,0.75) rectangle (\x+0.75, 1.5);
            
            \draw [fill=red!55, line width=0.3mm] (\x +1.5,1.5) rectangle (\x+2.25, 2.25);
        }
        
         \foreach \x in {0, 0.75} { 
            \draw [fill=blue!55, line width=0.3mm] (\x,1.5) rectangle (\x+0.75, 2.25);
            \draw [fill=red!55, line width=0.3mm] (\x+2.25,0.75) rectangle (\x+3, 1.5);
        }
    
    \draw [fill=blue!55, line width=0.3mm] (0,2.25) rectangle (0.75, 3);
    
    \draw [fill=red!55, line width=0.3mm] (3,0) rectangle (3.75, 0.75);

    
      \draw [fill=orange!55, line width=0.3mm] (0,3) rectangle (0.75, 3.75);


\begin{scope}[shift={(0,6.5)}]
 \draw [fill=green!55, line width=0.3mm] (0,3) rectangle (0.75, 3.75);
\end{scope}

\end{scope}

\begin{scope}[shift={(17.5,-10.75)}]
\foreach \x in {0, 0.75, 1.5, 2.25} {
            \draw [fill=blue!55, line width=0.3mm] (\x,0) rectangle (\x+0.75, 0.75);

    }
    
    \foreach \x in {0, 0.75, 1.5} { 
            \draw [fill=blue!55, line width=0.3mm] (\x,0.75) rectangle (\x+0.75, 1.5);
            
            \draw [fill=red!55, line width=0.3mm] (\x +1.5,1.5) rectangle (\x+2.25, 2.25);
            \draw [fill=red!55, line width=0.3mm] (\x+1.5,2.25) rectangle (\x+2.25, 3);
        }
        
         \foreach \x in {0, 0.75} { 
            \draw [fill=blue!55, line width=0.3mm] (\x,1.5) rectangle (\x+0.75, 2.25);
            \draw [fill=red!55, line width=0.3mm] (\x+2.25,0.75) rectangle (\x+3, 1.5);
        }

    \draw [fill=red!55, line width=0.3mm] (3,0) rectangle (3.75, 0.75);

     \foreach \x in {0, 0.75} { 
     \foreach \y in {0, -0.75}{
      \draw [fill=orange!55, line width=0.3mm] (\x,\y+3) rectangle (\x+0.75,\y+3.75);
      }
}

\end{scope}

\begin{scope}[shift={(17.5,-17.25)}]
\foreach \x in {0, 0.75, 1.5, 2.25} {
            \draw [fill=blue!55, line width=0.3mm] (\x,0) rectangle (\x+0.75, 0.75);

    }
    
    \foreach \x in {0, 0.75, 1.5} { 
            \draw [fill=blue!55, line width=0.3mm] (\x,0.75) rectangle (\x+0.75, 1.5);

        }
        
         \foreach \x in {0, 0.75} { 
            
            \draw [fill=red!55, line width=0.3mm] (\x+2.25,0.75) rectangle (\x+3, 1.5);
            
              \draw [fill=red!55, line width=0.3mm] (\x +2.25,1.5) rectangle (\x+3, 2.25);
            \draw [fill=red!55, line width=0.3mm] (\x+2.25,2.25) rectangle (\x+3, 3);
        }

    \draw [fill=red!55, line width=0.3mm] (3,0) rectangle (3.75, 0.75);

     \foreach \x in {0, 0.75, 1.5} { 
     \foreach \y in {0, -0.75, -1.5}{
      \draw [fill=orange!55, line width=0.3mm] (\x,\y+3) rectangle (\x+0.75,\y+3.75);
      }
}

\end{scope}

\begin{scope}[shift={(17.5,-23.75)}]
\foreach \x in {0, 0.75, 1.5, 2.25} {
            \draw [fill=blue!55, line width=0.3mm] (\x,0) rectangle (\x+0.75, 0.75);

    }

         \foreach \x in {0.75} { 
            
            \draw [fill=red!55, line width=0.3mm] (\x+2.25,0.75) rectangle (\x+3, 1.5);
            
              \draw [fill=red!55, line width=0.3mm] (\x +2.25,1.5) rectangle (\x+3, 2.25);
            \draw [fill=red!55, line width=0.3mm] (\x+2.25,2.25) rectangle (\x+3, 3);
        }

    \draw [fill=red!55, line width=0.3mm] (3,0) rectangle (3.75, 0.75);

     \foreach \x in {0, 0.75, 1.5, 2.25} { 
     \foreach \y in {0, -0.75, -1.5, -2.25}{
      \draw [fill=orange!55, line width=0.3mm] (\x,\y+3) rectangle (\x+0.75,\y+3.75);
      }
}

\end{scope}

\end{scope}


\begin{scope}[shift={(7,0)}]

\begin{scope}[shift={(17.5,-4.25)}]
\foreach \x in {0, 0.75, 1.5, 2.25} {
            \draw [fill=blue!55, line width=0.3mm] (\x,0) rectangle (\x+0.75, 0.75);

    }
    
    \foreach \x in {0, 0.75, 1.5} { 
            \draw [fill=blue!55, line width=0.3mm] (\x,0.75) rectangle (\x+0.75, 1.5);
            
            \draw [fill=red!55, line width=0.3mm] (\x +1.5,1.5) rectangle (\x+2.25, 2.25);
            
             \draw [fill=red!55, line width=0.3mm] (\x+1.5,2.25) rectangle (\x+2.25, 3);
        }
        
         \foreach \x in {0, 0.75} { 
            \draw [fill=blue!55, line width=0.3mm] (\x,1.5) rectangle (\x+0.75, 2.25);
            \draw [fill=red!55, line width=0.3mm] (\x+2.25,0.75) rectangle (\x+3, 1.5);
        }
    
    
    \draw [fill=red!55, line width=0.3mm] (3,0) rectangle (3.75, 0.75);

    

\end{scope}

\begin{scope}[shift={(17.5,-10.75)}]
\foreach \x in {0, 0.75, 1.5, 2.25} {
            \draw [fill=blue!55, line width=0.3mm] (\x,0) rectangle (\x+0.75, 0.75);

    }
    
    \foreach \x in {0, 0.75, 1.5} { 
            \draw [fill=blue!55, line width=0.3mm] (\x,0.75) rectangle (\x+0.75, 1.5);
            
            \draw [fill=red!55, line width=0.3mm] (\x +1.5,1.5) rectangle (\x+2.25, 2.25);
            \draw [fill=red!55, line width=0.3mm] (\x+1.5,2.25) rectangle (\x+2.25, 3);
        }
        
         \foreach \x in {0, 0.75} { 
            \draw [fill=blue!55, line width=0.3mm] (\x,1.5) rectangle (\x+0.75, 2.25);
            \draw [fill=red!55, line width=0.3mm] (\x+2.25,0.75) rectangle (\x+3, 1.5);
        }

    \draw [fill=red!55, line width=0.3mm] (3,0) rectangle (3.75, 0.75);

     \foreach \x in {0, 0.75} { 
     \foreach \y in {0, -0.75}{
      \draw [fill=orange!55, line width=0.3mm] (\x,\y+3) rectangle (\x+0.75,\y+3.75);
      }
}

\begin{scope}[shift={(0,6.5)}]
 \foreach \x in {0, 0.75} { 
     \foreach \y in {0, -0.75}{
      \draw [fill=green!55, line width=0.3mm] (\x,\y+3) rectangle (\x+0.75,\y+3.75);
      }
}
\end{scope}

\begin{scope}[shift={(0,13)}]
 \foreach \x in {0, 0.75} { 
     \foreach \y in {0, -0.75}{
      \draw [fill=green!55, line width=0.3mm] (\x,\y+3) rectangle (\x+0.75,\y+3.75);
      }
}
\end{scope}

\end{scope}

\begin{scope}[shift={(17.5,-17.25)}]
\foreach \x in {0, 0.75, 1.5, 2.25} {
            \draw [fill=blue!55, line width=0.3mm] (\x,0) rectangle (\x+0.75, 0.75);

    }
    
    \foreach \x in {0, 0.75, 1.5} { 
            \draw [fill=blue!55, line width=0.3mm] (\x,0.75) rectangle (\x+0.75, 1.5);

        }
        
         \foreach \x in {0, 0.75} { 
            
            \draw [fill=red!55, line width=0.3mm] (\x+2.25,0.75) rectangle (\x+3, 1.5);
            
              \draw [fill=red!55, line width=0.3mm] (\x +2.25,1.5) rectangle (\x+3, 2.25);
            \draw [fill=red!55, line width=0.3mm] (\x+2.25,2.25) rectangle (\x+3, 3);
        }

    \draw [fill=red!55, line width=0.3mm] (3,0) rectangle (3.75, 0.75);

     \foreach \x in {0, 0.75, 1.5} { 
     \foreach \y in {0, -0.75, -1.5}{
      \draw [fill=orange!55, line width=0.3mm] (\x,\y+3) rectangle (\x+0.75,\y+3.75);
      }
}

\end{scope}

\begin{scope}[shift={(17.5,-23.75)}]
\foreach \x in {0, 0.75, 1.5, 2.25} {
            \draw [fill=blue!55, line width=0.3mm] (\x,0) rectangle (\x+0.75, 0.75);

    }

         \foreach \x in {0.75} { 
            
            \draw [fill=red!55, line width=0.3mm] (\x+2.25,0.75) rectangle (\x+3, 1.5);
            
              \draw [fill=red!55, line width=0.3mm] (\x +2.25,1.5) rectangle (\x+3, 2.25);
            \draw [fill=red!55, line width=0.3mm] (\x+2.25,2.25) rectangle (\x+3, 3);
        }

    \draw [fill=red!55, line width=0.3mm] (3,0) rectangle (3.75, 0.75);

     \foreach \x in {0, 0.75, 1.5, 2.25} { 
     \foreach \y in {0, -0.75, -1.5, -2.25}{
      \draw [fill=orange!55, line width=0.3mm] (\x,\y+3) rectangle (\x+0.75,\y+3.75);
      }
}

\end{scope}

\end{scope}


\begin{scope}[shift={(14,0)}]


\begin{scope}[shift={(17.5,-4.25)}]
\foreach \x in {0, 0.75, 1.5, 2.25} {
            \draw [fill=blue!55, line width=0.3mm] (\x,0) rectangle (\x+0.75, 0.75);

   }
    
    \foreach \x in {0, 0.75, 1.5} { 
            \draw [fill=blue!55, line width=0.3mm] (\x,0.75) rectangle (\x+0.75, 1.5);

        }
        
         \foreach \x in {0, 0.75} { 
         
          \draw [fill=red!55, line width=0.3mm] (\x +2.25,1.5) rectangle (\x+3, 2.25);
            
             \draw [fill=red!55, line width=0.3mm] (\x+2.25,2.25) rectangle (\x+3, 3);
            \draw [fill=red!55, line width=0.3mm] (\x+2.25,0.75) rectangle (\x+3, 1.5);
        }
    
    
    \draw [fill=red!55, line width=0.3mm] (3,0) rectangle (3.75, 0.75);

    

\end{scope}

\begin{scope}[shift={(17.5,-10.75)}]
\foreach \x in {0, 0.75, 1.5, 2.25} {
            \draw [fill=blue!55, line width=0.3mm] (\x,0) rectangle (\x+0.75, 0.75);

    }
    
    \foreach \x in {0, 0.75, 1.5} { 
            \draw [fill=blue!55, line width=0.3mm] (\x,0.75) rectangle (\x+0.75, 1.5);

        }
        
         \foreach \x in {0, 0.75} { 
         
  \draw [fill=red!55, line width=0.3mm] (\x +2.25,1.5) rectangle (\x+3, 2.25);
            \draw [fill=red!55, line width=0.3mm] (\x+2.25,2.25) rectangle (\x+3, 3);         
         
            \draw [fill=red!55, line width=0.3mm] (\x+2.25,0.75) rectangle (\x+3, 1.5);
        }

    \draw [fill=red!55, line width=0.3mm] (3,0) rectangle (3.75, 0.75);

\end{scope}

\begin{scope}[shift={(17.5,-17.25)}]
\foreach \x in {0, 0.75, 1.5, 2.25} {
            \draw [fill=blue!55, line width=0.3mm] (\x,0) rectangle (\x+0.75, 0.75);

    }
    
    \foreach \x in {0, 0.75, 1.5} { 
            \draw [fill=blue!55, line width=0.3mm] (\x,0.75) rectangle (\x+0.75, 1.5);

        }
        
         \foreach \x in {0, 0.75} { 
            
            \draw [fill=red!55, line width=0.3mm] (\x+2.25,0.75) rectangle (\x+3, 1.5);
            
              \draw [fill=red!55, line width=0.3mm] (\x +2.25,1.5) rectangle (\x+3, 2.25);
            \draw [fill=red!55, line width=0.3mm] (\x+2.25,2.25) rectangle (\x+3, 3);
        }

    \draw [fill=red!55, line width=0.3mm] (3,0) rectangle (3.75, 0.75);

     \foreach \x in {0, 0.75, 1.5} { 
     \foreach \y in {0, -0.75, -1.5}{
      \draw [fill=orange!55, line width=0.3mm] (\x,\y+3) rectangle (\x+0.75,\y+3.75);
      }
}

\begin{scope}[shift={(0,6.5)}]
\foreach \x in {0, 0.75, 1.5} { 
     \foreach \y in {0, -0.75, -1.5}{
      \draw [fill=green!55, line width=0.3mm] (\x,\y+3) rectangle (\x+0.75,\y+3.75);
      }
}
\end{scope}

\begin{scope}[shift={(0,13)}]
\foreach \x in {0, 0.75, 1.5} { 
     \foreach \y in {0, -0.75, -1.5}{
      \draw [fill=green!55, line width=0.3mm] (\x,\y+3) rectangle (\x+0.75,\y+3.75);
      }
}
\end{scope}

\begin{scope}[shift={(0,19.5)}]
\foreach \x in {0, 0.75, 1.5} { 
     \foreach \y in {0, -0.75, -1.5}{
      \draw [fill=green!55, line width=0.3mm] (\x,\y+3) rectangle (\x+0.75,\y+3.75);
      }
}
\end{scope}

\end{scope}

\begin{scope}[shift={(17.5,-23.75)}]
\foreach \x in {0, 0.75, 1.5, 2.25} {
            \draw [fill=blue!55, line width=0.3mm] (\x,0) rectangle (\x+0.75, 0.75);

    }

         \foreach \x in {0.75} { 
            
            \draw [fill=red!55, line width=0.3mm] (\x+2.25,0.75) rectangle (\x+3, 1.5);
            
              \draw [fill=red!55, line width=0.3mm] (\x +2.25,1.5) rectangle (\x+3, 2.25);
            \draw [fill=red!55, line width=0.3mm] (\x+2.25,2.25) rectangle (\x+3, 3);
        }

    \draw [fill=red!55, line width=0.3mm] (3,0) rectangle (3.75, 0.75);

     \foreach \x in {0, 0.75, 1.5, 2.25} { 
     \foreach \y in {0, -0.75, -1.5, -2.25}{
      \draw [fill=orange!55, line width=0.3mm] (\x,\y+3) rectangle (\x+0.75,\y+3.75);
      }
}

\end{scope}

\end{scope}


\begin{scope}[shift={(21,0)}]


\begin{scope}[shift={(17.5,-4.25)}]
\foreach \x in {0, 0.75, 1.5, 2.25} {
            \draw [fill=blue!55, line width=0.3mm] (\x,0) rectangle (\x+0.75, 0.75);

   }
    
    \foreach \x in {0, 0.75, 1.5} { 

        }

         \draw [fill=red!55, line width=0.3mm] (3,1.5) rectangle (3.75, 2.25);
            
             \draw [fill=red!55, line width=0.3mm] (3,2.25) rectangle (3.75, 3);
            
            \draw [fill=red!55, line width=0.3mm] (3,0.75) rectangle (3.75, 1.5);
 
    \draw [fill=red!55, line width=0.3mm] (3,0) rectangle (3.75, 0.75);

    

\end{scope}

\begin{scope}[shift={(17.5,-10.75)}]
\foreach \x in {0, 0.75, 1.5, 2.25} {
            \draw [fill=blue!55, line width=0.3mm] (\x,0) rectangle (\x+0.75, 0.75);

    }

      \draw [fill=red!55, line width=0.3mm] (3,1.5) rectangle (3.75, 2.25);
            \draw [fill=red!55, line width=0.3mm] (3,2.25) rectangle (3.75, 3);

            \draw [fill=red!55, line width=0.3mm] (3,0.75) rectangle (3.75, 1.5);

    \draw [fill=red!55, line width=0.3mm] (3,0) rectangle (3.75, 0.75);

\end{scope}

\begin{scope}[shift={(17.5,-17.25)}]
\foreach \x in {0, 0.75, 1.5, 2.25} {
            \draw [fill=blue!55, line width=0.3mm] (\x,0) rectangle (\x+0.75, 0.75);

    }

       \draw [fill=red!55, line width=0.3mm] (3,0.75) rectangle (3.75, 1.5);
            
              \draw [fill=red!55, line width=0.3mm] (3,1.5) rectangle (3.75, 2.25);
            \draw [fill=red!55, line width=0.3mm] (3,2.25) rectangle (3.75, 3);

    \draw [fill=red!55, line width=0.3mm] (3,0) rectangle (3.75, 0.75);

\end{scope}

\begin{scope}[shift={(17.5,-23.75)}]
\foreach \x in {0, 0.75, 1.5, 2.25} {
            \draw [fill=blue!55, line width=0.3mm] (\x,0) rectangle (\x+0.75, 0.75);

    }

         \foreach \x in {0.75} { 
            
            \draw [fill=red!55, line width=0.3mm] (\x+2.25,0.75) rectangle (\x+3, 1.5);
            
              \draw [fill=red!55, line width=0.3mm] (\x +2.25,1.5) rectangle (\x+3, 2.25);
            \draw [fill=red!55, line width=0.3mm] (\x+2.25,2.25) rectangle (\x+3, 3);
        }

    \draw [fill=red!55, line width=0.3mm] (3,0) rectangle (3.75, 0.75);

     \foreach \x in {0, 0.75, 1.5, 2.25} { 
     \foreach \y in {0, -0.75, -1.5, -2.25}{
      \draw [fill=orange!55, line width=0.3mm] (\x,\y+3) rectangle (\x+0.75,\y+3.75);
      }
}


\begin{scope}[shift={(0,6.5)}]
 \foreach \x in {0, 0.75, 1.5, 2.25} { 
     \foreach \y in {0, -0.75, -1.5, -2.25}{
      \draw [fill=green!55, line width=0.3mm] (\x,\y+3) rectangle (\x+0.75,\y+3.75);
      }
}
\end{scope}

\begin{scope}[shift={(0,13)}]
 \foreach \x in {0, 0.75, 1.5, 2.25} { 
     \foreach \y in {0, -0.75, -1.5, -2.25}{
      \draw [fill=green!55, line width=0.3mm] (\x,\y+3) rectangle (\x+0.75,\y+3.75);
      }
}
\end{scope}

\begin{scope}[shift={(0,19.5)}]
 \foreach \x in {0, 0.75, 1.5, 2.25} { 
     \foreach \y in {0, -0.75, -1.5, -2.25}{
      \draw [fill=green!55, line width=0.3mm] (\x,\y+3) rectangle (\x+0.75,\y+3.75);
      }
}
\end{scope}

\begin{scope}[shift={(0,26)}]
 \foreach \x in {0, 0.75, 1.5, 2.25} { 
     \foreach \y in {0, -0.75, -1.5, -2.25}{
      \draw [fill=green!55, line width=0.3mm] (\x,\y+3) rectangle (\x+0.75,\y+3.75);
      }
}
\end{scope}

\end{scope}

\end{scope}

\end{tikzpicture}
\vskip .8cm
\captionof{figure}{2D sections of Figure $\ref{4 pyramids 4D}$}
\label{2D section Nicomachus 4D Bis}
\vskip .2cm
\restoreparindent We now use this extra first row to fill the gaps in the other pieces until each shape becomes a rectangle. In Figure~\ref{DIY Filling}, this step was based on identity~\eqref{Algebraic formula} and a 3D picture. In this 2D version, the same DIY step is shown in a different visual way, which is even simpler.

The top‐right corner is empty in every 2D puzzle, in all rows after the first one. In total, there are $n$ rows and 
$n$ columns with this missing top‐right square. These 
$n\times n$ gaps correspond to the 
$n\times n$ small squares of the green block in the last section of the first row, and we use those green squares to fill all these gaps.

If we now ignore the last row and the last column, we have 
$n-1$ rows and $n-1$ columns of 2D puzzles. In this reduced array, the empty square moves to the second position from the right in the first row. Again, there are as many gaps as green squares in the penultimate section of the first row, so we can fill them as well.

We repeat this procedure: each time we remove the outer row and column, the gap shifts one position to the left, and we fill it with the corresponding green squares from the first row. In the end, we obtain an $n\times n$ array of rectangular 2D puzzles, each with base and height $n+1$ (see Figure~\ref{2D section Nicomachus 4D Bis Bis}).

There are as many unit squares in this 2D puzzle as there are unit cubes in the four 4D pyramids. Therefore $4(1^3 + 2^3 + 3^3 +\cdots +n^3)=n^2 (n+1)^2$ which is exactly the identity we already knew.\par
\vspace{5mm}
\noindent
\begin{tikzpicture}[scale=0.43]

Esta es la placa de radiografía vertical

\draw [dashed, fill=white!40, thick] (17,0) rectangle (44.6,-25);
\draw [dashed, fill=white!40, thick] (17,0) rectangle (37.6,-19);
\draw [dashed, fill=white!40, thick] (17,0) rectangle (30.6,-12);
\draw [dashed, fill=white!40, thick] (17,0) rectangle (23.6,-5.5);

\begin{scope}[shift={(0,0)}]

\begin{scope}[shift={(17.5,-4.25)}]
\foreach \x in {0, 0.75, 1.5, 2.25} {
            \draw [fill=blue!55, line width=0.3mm] (\x,0) rectangle (\x+0.75, 0.75);
             \draw [fill=green!55, line width=0.3mm] (\x+0.75,3) rectangle (\x+1.5, 3.75);
            
            \draw [fill=red!55, line width=0.3mm] (\x+0.75,2.25) rectangle (\x+1.5, 3);
       
    }
    
    \foreach \x in {0, 0.75, 1.5} { 
            \draw [fill=blue!55, line width=0.3mm] (\x,0.75) rectangle (\x+0.75, 1.5);
            
            \draw [fill=red!55, line width=0.3mm] (\x +1.5,1.5) rectangle (\x+2.25, 2.25);
        }
        
         \foreach \x in {0, 0.75} { 
            \draw [fill=blue!55, line width=0.3mm] (\x,1.5) rectangle (\x+0.75, 2.25);
            \draw [fill=red!55, line width=0.3mm] (\x+2.25,0.75) rectangle (\x+3, 1.5);
        }
    
    \draw [fill=blue!55, line width=0.3mm] (0,2.25) rectangle (0.75, 3);
    
    \draw [fill=red!55, line width=0.3mm] (3,0) rectangle (3.75, 0.75);

    
      \draw [fill=orange!55, line width=0.3mm] (0,3) rectangle (0.75, 3.75);

\end{scope}

\begin{scope}[shift={(17.5,-10.75)}]
\foreach \x in {0, 0.75, 1.5, 2.25} {
            \draw [fill=blue!55, line width=0.3mm] (\x,0) rectangle (\x+0.75, 0.75);

    }
    
    \foreach \x in {0, 0.75, 1.5} { 
            \draw [fill=blue!55, line width=0.3mm] (\x,0.75) rectangle (\x+0.75, 1.5);
            \draw [fill=green!55, line width=0.3mm] (\x+1.5,3) rectangle (\x+2.25, 3.75);
            
            \draw [fill=red!55, line width=0.3mm] (\x +1.5,1.5) rectangle (\x+2.25, 2.25);
            \draw [fill=red!55, line width=0.3mm] (\x+1.5,2.25) rectangle (\x+2.25, 3);
        }
        
         \foreach \x in {0, 0.75} { 
            \draw [fill=blue!55, line width=0.3mm] (\x,1.5) rectangle (\x+0.75, 2.25);
            \draw [fill=red!55, line width=0.3mm] (\x+2.25,0.75) rectangle (\x+3, 1.5);
        }

    \draw [fill=red!55, line width=0.3mm] (3,0) rectangle (3.75, 0.75);

     \foreach \x in {0, 0.75} { 
     \foreach \y in {0, -0.75}{
      \draw [fill=orange!55, line width=0.3mm] (\x,\y+3) rectangle (\x+0.75,\y+3.75);
      }
}

\end{scope}

\begin{scope}[shift={(17.5,-17.25)}]
\foreach \x in {0, 0.75, 1.5, 2.25} {
            \draw [fill=blue!55, line width=0.3mm] (\x,0) rectangle (\x+0.75, 0.75);

    }
    
    \foreach \x in {0, 0.75, 1.5} { 
            \draw [fill=blue!55, line width=0.3mm] (\x,0.75) rectangle (\x+0.75, 1.5);

        }
        
         \foreach \x in {0, 0.75} { 
             \draw [fill=green!55, line width=0.3mm] (\x+2.25,3) rectangle (\x+3, 3.75);
            \draw [fill=red!55, line width=0.3mm] (\x+2.25,0.75) rectangle (\x+3, 1.5);
            
              \draw [fill=red!55, line width=0.3mm] (\x +2.25,1.5) rectangle (\x+3, 2.25);
            \draw [fill=red!55, line width=0.3mm] (\x+2.25,2.25) rectangle (\x+3, 3);
        }

    \draw [fill=red!55, line width=0.3mm] (3,0) rectangle (3.75, 0.75);

     \foreach \x in {0, 0.75, 1.5} { 
     \foreach \y in {0, -0.75, -1.5}{
      \draw [fill=orange!55, line width=0.3mm] (\x,\y+3) rectangle (\x+0.75,\y+3.75);
      }
}

\end{scope}

\begin{scope}[shift={(17.5,-23.75)}]
\foreach \x in {0, 0.75, 1.5, 2.25} {
            \draw [fill=blue!55, line width=0.3mm] (\x,0) rectangle (\x+0.75, 0.75);
            
    }
    
         \foreach \x in {0.75} { 
            
              \draw [fill=green!55, line width=0.3mm] (\x+2.25,3) rectangle (\x+3, 3.75);
            \draw [fill=red!55, line width=0.3mm] (\x+2.25,0.75) rectangle (\x+3, 1.5);
            
              \draw [fill=red!55, line width=0.3mm] (\x +2.25,1.5) rectangle (\x+3, 2.25);
            \draw [fill=red!55, line width=0.3mm] (\x+2.25,2.25) rectangle (\x+3, 3);
        }
    
    \draw [fill=red!55, line width=0.3mm] (3,0) rectangle (3.75, 0.75);
    
     \foreach \x in {0, 0.75, 1.5, 2.25} { 
     \foreach \y in {0, -0.75, -1.5, -2.25}{
      \draw [fill=orange!55, line width=0.3mm] (\x,\y+3) rectangle (\x+0.75,\y+3.75);
      }
}

\end{scope}

\end{scope}


\begin{scope}[shift={(7,0)}]

\begin{scope}[shift={(17.5,-4.25)}]
\foreach \x in {0, 0.75, 1.5, 2.25} {
            \draw [fill=blue!55, line width=0.3mm] (\x,0) rectangle (\x+0.75, 0.75);

    }
    
    \foreach \x in {0, 0.75, 1.5} { 
  \draw [fill=green!55, line width=0.3mm] (\x+1.5,3) rectangle (\x+2.25, 3.75);    
    
            \draw [fill=blue!55, line width=0.3mm] (\x,0.75) rectangle (\x+0.75, 1.5);
            
            \draw [fill=red!55, line width=0.3mm] (\x +1.5,1.5) rectangle (\x+2.25, 2.25);
            
             \draw [fill=red!55, line width=0.3mm] (\x+1.5,2.25) rectangle (\x+2.25, 3);
        }
        
         \foreach \x in {0, 0.75} { 
            \draw [fill=blue!55, line width=0.3mm] (\x,1.5) rectangle (\x+0.75, 2.25);
            \draw [fill=red!55, line width=0.3mm] (\x+2.25,0.75) rectangle (\x+3, 1.5);
        }
    
    
    \draw [fill=red!55, line width=0.3mm] (3,0) rectangle (3.75, 0.75);
    
    

\end{scope}

\begin{scope}[shift={(17.5,-10.75)}]
\foreach \x in {0, 0.75, 1.5, 2.25} {
            \draw [fill=blue!55, line width=0.3mm] (\x,0) rectangle (\x+0.75, 0.75);
            
    }
    
    \foreach \x in {0, 0.75, 1.5} { 
    
    \draw [fill=green!55, line width=0.3mm] (\x+1.5,3) rectangle (\x+2.25, 3.75);
            \draw [fill=blue!55, line width=0.3mm] (\x,0.75) rectangle (\x+0.75, 1.5);
            
            \draw [fill=red!55, line width=0.3mm] (\x +1.5,1.5) rectangle (\x+2.25, 2.25);
            \draw [fill=red!55, line width=0.3mm] (\x+1.5,2.25) rectangle (\x+2.25, 3);
        }
        
         \foreach \x in {0, 0.75} { 
            \draw [fill=blue!55, line width=0.3mm] (\x,1.5) rectangle (\x+0.75, 2.25);
            \draw [fill=red!55, line width=0.3mm] (\x+2.25,0.75) rectangle (\x+3, 1.5);
        }

    \draw [fill=red!55, line width=0.3mm] (3,0) rectangle (3.75, 0.75);

     \foreach \x in {0, 0.75} { 
     \foreach \y in {0, -0.75}{
      \draw [fill=orange!55, line width=0.3mm] (\x,\y+3) rectangle (\x+0.75,\y+3.75);
      }
}

\begin{scope}[shift={(0,6.5)}]
 \foreach \x in {0, 0.75} { 
     \foreach \y in {0, -0.75}{
      \draw [fill=green!55, line width=0.3mm] (\x,\y+3) rectangle (\x+0.75,\y+3.75);
      }
}
\end{scope}

\end{scope}

\begin{scope}[shift={(17.5,-17.25)}]
\foreach \x in {0, 0.75, 1.5, 2.25} {
            \draw [fill=blue!55, line width=0.3mm] (\x,0) rectangle (\x+0.75, 0.75);
            
    }
    
    \foreach \x in {0, 0.75, 1.5} { 
            \draw [fill=blue!55, line width=0.3mm] (\x,0.75) rectangle (\x+0.75, 1.5);
            
        }
        
         \foreach \x in {0, 0.75} { 
            \draw [fill=green!55, line width=0.3mm] (\x+2.25,3) rectangle (\x+3, 3.75);
            
            \draw [fill=red!55, line width=0.3mm] (\x+2.25,0.75) rectangle (\x+3, 1.5);
            
              \draw [fill=red!55, line width=0.3mm] (\x +2.25,1.5) rectangle (\x+3, 2.25);
            \draw [fill=red!55, line width=0.3mm] (\x+2.25,2.25) rectangle (\x+3, 3);
        }

    \draw [fill=red!55, line width=0.3mm] (3,0) rectangle (3.75, 0.75);
    
     \foreach \x in {0, 0.75, 1.5} { 
     \foreach \y in {0, -0.75, -1.5}{
      \draw [fill=orange!55, line width=0.3mm] (\x,\y+3) rectangle (\x+0.75,\y+3.75);
      }
}

\end{scope}

\begin{scope}[shift={(17.5,-23.75)}]
\foreach \x in {0, 0.75, 1.5, 2.25} {
            \draw [fill=blue!55, line width=0.3mm] (\x,0) rectangle (\x+0.75, 0.75);       
       
    }

         \foreach \x in {0.75} { 
            
\draw [fill=green!55, line width=0.3mm] (\x+2.25,3) rectangle (\x+3, 3.75);            
            
            \draw [fill=red!55, line width=0.3mm] (\x+2.25,0.75) rectangle (\x+3, 1.5);
            
              \draw [fill=red!55, line width=0.3mm] (\x +2.25,1.5) rectangle (\x+3, 2.25);
            \draw [fill=red!55, line width=0.3mm] (\x+2.25,2.25) rectangle (\x+3, 3);
        }

    \draw [fill=red!55, line width=0.3mm] (3,0) rectangle (3.75, 0.75);
    
     \foreach \x in {0, 0.75, 1.5, 2.25} { 
     \foreach \y in {0, -0.75, -1.5, -2.25}{
      \draw [fill=orange!55, line width=0.3mm] (\x,\y+3) rectangle (\x+0.75,\y+3.75);
      }
}

\end{scope}

\end{scope}


\begin{scope}[shift={(14,0)}]


\begin{scope}[shift={(17.5,-4.25)}]
\foreach \x in {0, 0.75, 1.5, 2.25} {
            \draw [fill=blue!55, line width=0.3mm] (\x,0) rectangle (\x+0.75, 0.75);
            
   }
    
    \foreach \x in {0, 0.75, 1.5} { 
            \draw [fill=blue!55, line width=0.3mm] (\x,0.75) rectangle (\x+0.75, 1.5);

        }
        
         \foreach \x in {0, 0.75} { 
         
 \draw [fill=green!55, line width=0.3mm] (\x +2.25,3) rectangle (\x+3, 3.75);         
         
          \draw [fill=red!55, line width=0.3mm] (\x +2.25,1.5) rectangle (\x+3, 2.25);
            
             \draw [fill=red!55, line width=0.3mm] (\x+2.25,2.25) rectangle (\x+3, 3);
            \draw [fill=red!55, line width=0.3mm] (\x+2.25,0.75) rectangle (\x+3, 1.5);
        }
    
    
    \draw [fill=red!55, line width=0.3mm] (3,0) rectangle (3.75, 0.75);

    

\end{scope}

\begin{scope}[shift={(17.5,-10.75)}]
\foreach \x in {0, 0.75, 1.5, 2.25} {
            \draw [fill=blue!55, line width=0.3mm] (\x,0) rectangle (\x+0.75, 0.75);
            
    }
    
    \foreach \x in {0, 0.75, 1.5} { 
            \draw [fill=blue!55, line width=0.3mm] (\x,0.75) rectangle (\x+0.75, 1.5);

        }
        
         \foreach \x in {0, 0.75} { 
         \draw [fill=green!55, line width=0.3mm] (\x +2.25,3) rectangle (\x+3, 3.75);
         
  \draw [fill=red!55, line width=0.3mm] (\x +2.25,1.5) rectangle (\x+3, 2.25);
            \draw [fill=red!55, line width=0.3mm] (\x+2.25,2.25) rectangle (\x+3, 3);         
         
            \draw [fill=red!55, line width=0.3mm] (\x+2.25,0.75) rectangle (\x+3, 1.5);
        }
    
    \draw [fill=red!55, line width=0.3mm] (3,0) rectangle (3.75, 0.75);
    
\end{scope}

\begin{scope}[shift={(17.5,-17.25)}]
\foreach \x in {0, 0.75, 1.5, 2.25} {
            \draw [fill=blue!55, line width=0.3mm] (\x,0) rectangle (\x+0.75, 0.75);
            
    }
    
    \foreach \x in {0, 0.75, 1.5} { 
            \draw [fill=blue!55, line width=0.3mm] (\x,0.75) rectangle (\x+0.75, 1.5);

        }
        
         \foreach \x in {0, 0.75} { 
            
\draw [fill=green!55, line width=0.3mm] (\x+2.25,3) rectangle (\x+3, 3.75);            
            
            \draw [fill=red!55, line width=0.3mm] (\x+2.25,0.75) rectangle (\x+3, 1.5);
            
              \draw [fill=red!55, line width=0.3mm] (\x +2.25,1.5) rectangle (\x+3, 2.25);
            \draw [fill=red!55, line width=0.3mm] (\x+2.25,2.25) rectangle (\x+3, 3);
        }
    
    \draw [fill=red!55, line width=0.3mm] (3,0) rectangle (3.75, 0.75);
    
     \foreach \x in {0, 0.75, 1.5} { 
     \foreach \y in {0, -0.75, -1.5}{
      \draw [fill=orange!55, line width=0.3mm] (\x,\y+3) rectangle (\x+0.75,\y+3.75);
      }
}

\begin{scope}[shift={(0,6.5)}]
\foreach \x in {0, 0.75, 1.5} { 
     \foreach \y in {0, -0.75, -1.5}{
      \draw [fill=green!55, line width=0.3mm] (\x,\y+3) rectangle (\x+0.75,\y+3.75);
      }
}
\end{scope}

\begin{scope}[shift={(0,13)}]
\foreach \x in {0, 0.75, 1.5} { 
     \foreach \y in {0, -0.75, -1.5}{
      \draw [fill=green!55, line width=0.3mm] (\x,\y+3) rectangle (\x+0.75,\y+3.75);
      }
}
\end{scope}

\end{scope}

\begin{scope}[shift={(17.5,-23.75)}]
\foreach \x in {0, 0.75, 1.5, 2.25} {
            \draw [fill=blue!55, line width=0.3mm] (\x,0) rectangle (\x+0.75, 0.75);
               
    }

         \foreach \x in {0.75} { 
             \draw [fill=green!55, line width=0.3mm] (\x+2.25,3) rectangle (\x+3, 3.75); 
             
            \draw [fill=red!55, line width=0.3mm] (\x+2.25,0.75) rectangle (\x+3, 1.5);
            
              \draw [fill=red!55, line width=0.3mm] (\x +2.25,1.5) rectangle (\x+3, 2.25);
            \draw [fill=red!55, line width=0.3mm] (\x+2.25,2.25) rectangle (\x+3, 3);
        }

    \draw [fill=red!55, line width=0.3mm] (3,0) rectangle (3.75, 0.75);
    
     \foreach \x in {0, 0.75, 1.5, 2.25} { 
     \foreach \y in {0, -0.75, -1.5, -2.25}{
      \draw [fill=orange!55, line width=0.3mm] (\x,\y+3) rectangle (\x+0.75,\y+3.75);
      }
}

\end{scope}

\end{scope}


\begin{scope}[shift={(21,0)}]


\begin{scope}[shift={(17.5,-4.25)}]
\foreach \x in {0, 0.75, 1.5, 2.25} {
            \draw [fill=blue!55, line width=0.3mm] (\x,0) rectangle (\x+0.75, 0.75);
            
   }
    
    \foreach \x in {0, 0.75, 1.5} { 
            
        }

\draw [fill=green!55, line width=0.3mm] (3,3) rectangle (3.75, 3.75);        
        
         \draw [fill=red!55, line width=0.3mm] (3,1.5) rectangle (3.75, 2.25);
            
             \draw [fill=red!55, line width=0.3mm] (3,2.25) rectangle (3.75, 3);
            
            \draw [fill=red!55, line width=0.3mm] (3,0.75) rectangle (3.75, 1.5);
 
    \draw [fill=red!55, line width=0.3mm] (3,0) rectangle (3.75, 0.75);
    
    

\end{scope}

\begin{scope}[shift={(17.5,-10.75)}]
\foreach \x in {0, 0.75, 1.5, 2.25} {
            \draw [fill=blue!55, line width=0.3mm] (\x,0) rectangle (\x+0.75, 0.75);

    }
    
   \draw [fill=green!55, line width=0.3mm] (3,3) rectangle (3.75, 3.75);
        
      \draw [fill=red!55, line width=0.3mm] (3,1.5) rectangle (3.75, 2.25);
            \draw [fill=red!55, line width=0.3mm] (3,2.25) rectangle (3.75, 3);

            \draw [fill=red!55, line width=0.3mm] (3,0.75) rectangle (3.75, 1.5);

    \draw [fill=red!55, line width=0.3mm] (3,0) rectangle (3.75, 0.75);

\end{scope}

\begin{scope}[shift={(17.5,-17.25)}]
\foreach \x in {0, 0.75, 1.5, 2.25} {
            \draw [fill=blue!55, line width=0.3mm] (\x,0) rectangle (\x+0.75, 0.75);

    }
    
    \draw [fill=green!55, line width=0.3mm] (3,3) rectangle (3.75, 3.75);
  
       \draw [fill=red!55, line width=0.3mm] (3,0.75) rectangle (3.75, 1.5);
            
              \draw [fill=red!55, line width=0.3mm] (3,1.5) rectangle (3.75, 2.25);
            \draw [fill=red!55, line width=0.3mm] (3,2.25) rectangle (3.75, 3);

    \draw [fill=red!55, line width=0.3mm] (3,0) rectangle (3.75, 0.75);

\end{scope}

\begin{scope}[shift={(17.5,-23.75)}]
\foreach \x in {0, 0.75, 1.5, 2.25} {
            \draw [fill=blue!55, line width=0.3mm] (\x,0) rectangle (\x+0.75, 0.75);

    }

         \foreach \x in {0.75} { 
            
\draw [fill=green!55, line width=0.3mm] (\x+2.25,3) rectangle (\x+3, 3.75);            
            
            \draw [fill=red!55, line width=0.3mm] (\x+2.25,0.75) rectangle (\x+3, 1.5);
            
              \draw [fill=red!55, line width=0.3mm] (\x +2.25,1.5) rectangle (\x+3, 2.25);
            \draw [fill=red!55, line width=0.3mm] (\x+2.25,2.25) rectangle (\x+3, 3);
        }

    \draw [fill=red!55, line width=0.3mm] (3,0) rectangle (3.75, 0.75);
    
     \foreach \x in {0, 0.75, 1.5, 2.25} { 
     \foreach \y in {0, -0.75, -1.5, -2.25}{
      \draw [fill=orange!55, line width=0.3mm] (\x,\y+3) rectangle (\x+0.75,\y+3.75);
      }
}


\begin{scope}[shift={(0,6.5)}]
 \foreach \x in {0, 0.75, 1.5, 2.25} { 
     \foreach \y in {0, -0.75, -1.5, -2.25}{
      \draw [fill=green!55, line width=0.3mm] (\x,\y+3) rectangle (\x+0.75,\y+3.75);
      }
}
\end{scope}

\begin{scope}[shift={(0,13)}]
 \foreach \x in {0, 0.75, 1.5, 2.25} { 
     \foreach \y in {0, -0.75, -1.5, -2.25}{
      \draw [fill=green!55, line width=0.3mm] (\x,\y+3) rectangle (\x+0.75,\y+3.75);
      }
}
\end{scope}

\begin{scope}[shift={(0,19.5)}]
 \foreach \x in {0, 0.75, 1.5, 2.25} { 
     \foreach \y in {0, -0.75, -1.5, -2.25}{
      \draw [fill=green!55, line width=0.3mm] (\x,\y+3) rectangle (\x+0.75,\y+3.75);
      }
}
\end{scope}

\end{scope}

\end{scope}


 \foreach \x in {0} { 
  \foreach \y in {0} { 
\begin{scope}[shift={(17.5+\x, -4.5-\y)}] 
   
\draw[<->] (0,0) -- (3.75,0) node[midway, below, yshift=0.5mm] {\small $n+1$};
\draw[<->] (4,0.2) -- (4,3.95) node[midway, right, xshift=-0.5mm] {\small $n+1$};

\end{scope}

}
}

\draw[<->] (17,-25.5) -- (44.5,-25.5) node[midway, below, yshift=-2mm] {\LARGE $n$};

\draw[<->] (45.1,-25) -- (45.1,0) node[midway, right, xshift=1.7mm] {\LARGE $n$};
\end{tikzpicture}
\vskip .8cm
\captionof{figure}{DIY step in the 2D puzzles}
\label{2D section Nicomachus 4D Bis Bis}

\section{The mysterious $(3n^2+3n-1)$ factor} 

\restoreparindent The last section of this work is devoted to a visual explanation of the identity
$$\sum_{k=1}^n k^4=\frac{1}{5}(n^5 + \frac{5}{2}n^4+\frac{5}{3}n^3-\frac{1}{6}n) = \frac{n(n+1)(2n+1)(3n^2+3n-1)}{30}.$$

\restoreparindent We will focus in particular on the factorized form
\vskip .3cm

\begin{tikzpicture}
\draw[-, line width=0mm] (0,0) -- (0, 0);

\begin{scope}[shift={(3.5,-2)}]
    \node (frac) at (0,0) {\huge $\displaystyle \frac{n(n+1)(n+\tikz[baseline=(char.base)]\node[draw, dashed, line width=0.5pt, inner sep=4pt, rectangle](char){$\frac{1}{2}$};)\tikz[baseline=(char.base)]\node[draw, dashed, line width=0.5pt, inner sep=3.2pt, rectangle](char){$(n^2 + n -\frac{1}{3})$};}{5}$};

    
      \draw[->, line width=0.3mm] (0,1) -- (0, 1.3) node[midway, above, yshift=4pt]{How does this term appear?.};
      \draw[->, line width=0.3mm] (4,0.4) -- (4.4, 0.4) node[right,  align=left]{\hskip .2cm Why does an irreducible  \\\hskip .2cm  factor appear?};

\end{scope}
\end{tikzpicture}
\captionof{figure}{Formula for the Sum of Fourth Powers.}\label{Fourth Powers}

\vskip .2cm

\restoreparindent The denominator tells us the dimension in which the visual construction takes place, although we will only see it through its lower–dimensional sections down to 2D. Using what we have learned in the lower–dimensional cases, we will identify the 2D puzzle that has to be rearranged into identical rectangles. The number and the side lengths of these rectangles must match the factors in the numerator of the previous expression. In the next subsections we will justify each of these factors.

\subsection{Step 0: Setting Up the 2D Puzzle Through Sections}

The sum of fourth powers $1^4 +2^4+ 3^4 + \cdots + n^4$ is equal to the number of unit 5D hypercubes in a 5D pyramid. In this work, we compute this sum by taking five such 5D pyramids and assembling and adjusting them so that they form a 5D rectangular block. The total number of unit hypercubes is then just the product of the five side lengths of this block.

Since we cannot visualize beyond three dimensions, we work with lower–dimensional sections. We start by taking four of the five 5D pyramids and assembling them, keeping in mind that the fifth pyramid will later be fitted into the remaining gaps. This configuration has $n$ four–dimensional sections. The first of these sections is exactly the 4D puzzle that appeared in the case of Nicomachus’ theorem. This first 4D section, in turn, has $n$ three–dimensional sections, which are precisely the ones shown in Figure~\ref{4 pyramids 4D}. Finally, each of these $n$ three–dimensional sections can be reduced to the 2D puzzle in Figure~\ref{2D section Nicomachus 4D Bis}, where each column corresponds to one section. 

We reproduce this figure again, but now it no longer represents four assembled 4D pyramids. Instead, it represents the first 4D section of four assembled 5D pyramids.\par
\vspace{5mm}
\noindent
\begin{tikzpicture}[scale=0.5]




\begin{scope}[shift={(0,0)}]

\begin{scope}[shift={(17.5,-4.25)}]
\foreach \x in {0, 0.75, 1.5, 2.25} {
            \draw [fill=blue!55, line width=0.3mm] (\x,0) rectangle (\x+0.75, 0.75);
            
            \draw [fill=red!55, line width=0.3mm] (\x+0.75,2.25) rectangle (\x+1.5, 3);
       
    }
    
    \foreach \x in {0, 0.75, 1.5} { 
            \draw [fill=blue!55, line width=0.3mm] (\x,0.75) rectangle (\x+0.75, 1.5);
            
            \draw [fill=red!55, line width=0.3mm] (\x +1.5,1.5) rectangle (\x+2.25, 2.25);
        }
        
         \foreach \x in {0, 0.75} { 
            \draw [fill=blue!55, line width=0.3mm] (\x,1.5) rectangle (\x+0.75, 2.25);
            \draw [fill=red!55, line width=0.3mm] (\x+2.25,0.75) rectangle (\x+3, 1.5);
        }
    
    \draw [fill=blue!55, line width=0.3mm] (0,2.25) rectangle (0.75, 3);
    
    \draw [fill=red!55, line width=0.3mm] (3,0) rectangle (3.75, 0.75);

    
      \draw [fill=orange!55, line width=0.3mm] (0,3) rectangle (0.75, 3.75);


\begin{scope}[shift={(0,6.5)}]
 \draw [fill=green!55, line width=0.3mm] (0,3) rectangle (0.75, 3.75);
\end{scope}

\end{scope}

\begin{scope}[shift={(17.5,-10.75)}]
\foreach \x in {0, 0.75, 1.5, 2.25} {
            \draw [fill=blue!55, line width=0.3mm] (\x,0) rectangle (\x+0.75, 0.75);

    }
    
    \foreach \x in {0, 0.75, 1.5} { 
            \draw [fill=blue!55, line width=0.3mm] (\x,0.75) rectangle (\x+0.75, 1.5);
            
            \draw [fill=red!55, line width=0.3mm] (\x +1.5,1.5) rectangle (\x+2.25, 2.25);
            \draw [fill=red!55, line width=0.3mm] (\x+1.5,2.25) rectangle (\x+2.25, 3);
        }
        
         \foreach \x in {0, 0.75} { 
            \draw [fill=blue!55, line width=0.3mm] (\x,1.5) rectangle (\x+0.75, 2.25);
            \draw [fill=red!55, line width=0.3mm] (\x+2.25,0.75) rectangle (\x+3, 1.5);
        }

    \draw [fill=red!55, line width=0.3mm] (3,0) rectangle (3.75, 0.75);

     \foreach \x in {0, 0.75} { 
     \foreach \y in {0, -0.75}{
      \draw [fill=orange!55, line width=0.3mm] (\x,\y+3) rectangle (\x+0.75,\y+3.75);
      }
}

\end{scope}

\begin{scope}[shift={(17.5,-17.25)}]
\foreach \x in {0, 0.75, 1.5, 2.25} {
            \draw [fill=blue!55, line width=0.3mm] (\x,0) rectangle (\x+0.75, 0.75);

    }
    
    \foreach \x in {0, 0.75, 1.5} { 
            \draw [fill=blue!55, line width=0.3mm] (\x,0.75) rectangle (\x+0.75, 1.5);

        }
        
         \foreach \x in {0, 0.75} { 
            
            \draw [fill=red!55, line width=0.3mm] (\x+2.25,0.75) rectangle (\x+3, 1.5);
            
              \draw [fill=red!55, line width=0.3mm] (\x +2.25,1.5) rectangle (\x+3, 2.25);
            \draw [fill=red!55, line width=0.3mm] (\x+2.25,2.25) rectangle (\x+3, 3);
        }

    \draw [fill=red!55, line width=0.3mm] (3,0) rectangle (3.75, 0.75);

     \foreach \x in {0, 0.75, 1.5} { 
     \foreach \y in {0, -0.75, -1.5}{
      \draw [fill=orange!55, line width=0.3mm] (\x,\y+3) rectangle (\x+0.75,\y+3.75);
      }
}

\end{scope}

\begin{scope}[shift={(17.5,-23.75)}]
\foreach \x in {0, 0.75, 1.5, 2.25} {
            \draw [fill=blue!55, line width=0.3mm] (\x,0) rectangle (\x+0.75, 0.75);

    }

         \foreach \x in {0.75} { 
            
            \draw [fill=red!55, line width=0.3mm] (\x+2.25,0.75) rectangle (\x+3, 1.5);
            
              \draw [fill=red!55, line width=0.3mm] (\x +2.25,1.5) rectangle (\x+3, 2.25);
            \draw [fill=red!55, line width=0.3mm] (\x+2.25,2.25) rectangle (\x+3, 3);
        }

    \draw [fill=red!55, line width=0.3mm] (3,0) rectangle (3.75, 0.75);

     \foreach \x in {0, 0.75, 1.5, 2.25} { 
     \foreach \y in {0, -0.75, -1.5, -2.25}{
      \draw [fill=orange!55, line width=0.3mm] (\x,\y+3) rectangle (\x+0.75,\y+3.75);
      }
}

\end{scope}

\end{scope}


\begin{scope}[shift={(7,0)}]

\begin{scope}[shift={(17.5,-4.25)}]
\foreach \x in {0, 0.75, 1.5, 2.25} {
            \draw [fill=blue!55, line width=0.3mm] (\x,0) rectangle (\x+0.75, 0.75);

    }
    
    \foreach \x in {0, 0.75, 1.5} { 
            \draw [fill=blue!55, line width=0.3mm] (\x,0.75) rectangle (\x+0.75, 1.5);
            
            \draw [fill=red!55, line width=0.3mm] (\x +1.5,1.5) rectangle (\x+2.25, 2.25);
            
             \draw [fill=red!55, line width=0.3mm] (\x+1.5,2.25) rectangle (\x+2.25, 3);
        }
        
         \foreach \x in {0, 0.75} { 
            \draw [fill=blue!55, line width=0.3mm] (\x,1.5) rectangle (\x+0.75, 2.25);
            \draw [fill=red!55, line width=0.3mm] (\x+2.25,0.75) rectangle (\x+3, 1.5);
        }
    
    
    \draw [fill=red!55, line width=0.3mm] (3,0) rectangle (3.75, 0.75);

    

\end{scope}

\begin{scope}[shift={(17.5,-10.75)}]
\foreach \x in {0, 0.75, 1.5, 2.25} {
            \draw [fill=blue!55, line width=0.3mm] (\x,0) rectangle (\x+0.75, 0.75);

    }
    
    \foreach \x in {0, 0.75, 1.5} { 
            \draw [fill=blue!55, line width=0.3mm] (\x,0.75) rectangle (\x+0.75, 1.5);
            
            \draw [fill=red!55, line width=0.3mm] (\x +1.5,1.5) rectangle (\x+2.25, 2.25);
            \draw [fill=red!55, line width=0.3mm] (\x+1.5,2.25) rectangle (\x+2.25, 3);
        }
        
         \foreach \x in {0, 0.75} { 
            \draw [fill=blue!55, line width=0.3mm] (\x,1.5) rectangle (\x+0.75, 2.25);
            \draw [fill=red!55, line width=0.3mm] (\x+2.25,0.75) rectangle (\x+3, 1.5);
        }

    \draw [fill=red!55, line width=0.3mm] (3,0) rectangle (3.75, 0.75);

     \foreach \x in {0, 0.75} { 
     \foreach \y in {0, -0.75}{
      \draw [fill=orange!55, line width=0.3mm] (\x,\y+3) rectangle (\x+0.75,\y+3.75);
      }
}

\begin{scope}[shift={(0,6.5)}]
 \foreach \x in {0, 0.75} { 
     \foreach \y in {0, -0.75}{
      \draw [fill=green!55, line width=0.3mm] (\x,\y+3) rectangle (\x+0.75,\y+3.75);
      }
}
\end{scope}

\begin{scope}[shift={(0,13)}]
 \foreach \x in {0, 0.75} { 
     \foreach \y in {0, -0.75}{
      \draw [fill=green!55, line width=0.3mm] (\x,\y+3) rectangle (\x+0.75,\y+3.75);
      }
}
\end{scope}

\end{scope}

\begin{scope}[shift={(17.5,-17.25)}]
\foreach \x in {0, 0.75, 1.5, 2.25} {
            \draw [fill=blue!55, line width=0.3mm] (\x,0) rectangle (\x+0.75, 0.75);

    }
    
    \foreach \x in {0, 0.75, 1.5} { 
            \draw [fill=blue!55, line width=0.3mm] (\x,0.75) rectangle (\x+0.75, 1.5);

        }
        
         \foreach \x in {0, 0.75} { 
            
            \draw [fill=red!55, line width=0.3mm] (\x+2.25,0.75) rectangle (\x+3, 1.5);
            
              \draw [fill=red!55, line width=0.3mm] (\x +2.25,1.5) rectangle (\x+3, 2.25);
            \draw [fill=red!55, line width=0.3mm] (\x+2.25,2.25) rectangle (\x+3, 3);
        }

    \draw [fill=red!55, line width=0.3mm] (3,0) rectangle (3.75, 0.75);

     \foreach \x in {0, 0.75, 1.5} { 
     \foreach \y in {0, -0.75, -1.5}{
      \draw [fill=orange!55, line width=0.3mm] (\x,\y+3) rectangle (\x+0.75,\y+3.75);
      }
}

\end{scope}

\begin{scope}[shift={(17.5,-23.75)}]
\foreach \x in {0, 0.75, 1.5, 2.25} {
            \draw [fill=blue!55, line width=0.3mm] (\x,0) rectangle (\x+0.75, 0.75);
       
    }

         \foreach \x in {0.75} { 
            
            \draw [fill=red!55, line width=0.3mm] (\x+2.25,0.75) rectangle (\x+3, 1.5);
            
              \draw [fill=red!55, line width=0.3mm] (\x +2.25,1.5) rectangle (\x+3, 2.25);
            \draw [fill=red!55, line width=0.3mm] (\x+2.25,2.25) rectangle (\x+3, 3);
        }

    \draw [fill=red!55, line width=0.3mm] (3,0) rectangle (3.75, 0.75);
    
     \foreach \x in {0, 0.75, 1.5, 2.25} { 
     \foreach \y in {0, -0.75, -1.5, -2.25}{
      \draw [fill=orange!55, line width=0.3mm] (\x,\y+3) rectangle (\x+0.75,\y+3.75);
      }
}

\end{scope}

\end{scope}


\begin{scope}[shift={(14,0)}]


\begin{scope}[shift={(17.5,-4.25)}]
\foreach \x in {0, 0.75, 1.5, 2.25} {
            \draw [fill=blue!55, line width=0.3mm] (\x,0) rectangle (\x+0.75, 0.75);

   }
    
    \foreach \x in {0, 0.75, 1.5} { 
            \draw [fill=blue!55, line width=0.3mm] (\x,0.75) rectangle (\x+0.75, 1.5);

        }
        
         \foreach \x in {0, 0.75} { 
         
          \draw [fill=red!55, line width=0.3mm] (\x +2.25,1.5) rectangle (\x+3, 2.25);
            
             \draw [fill=red!55, line width=0.3mm] (\x+2.25,2.25) rectangle (\x+3, 3);
            \draw [fill=red!55, line width=0.3mm] (\x+2.25,0.75) rectangle (\x+3, 1.5);
        }
    
    
    \draw [fill=red!55, line width=0.3mm] (3,0) rectangle (3.75, 0.75);
    
    

\end{scope}

\begin{scope}[shift={(17.5,-10.75)}]
\foreach \x in {0, 0.75, 1.5, 2.25} {
            \draw [fill=blue!55, line width=0.3mm] (\x,0) rectangle (\x+0.75, 0.75);

    }
    
    \foreach \x in {0, 0.75, 1.5} { 
            \draw [fill=blue!55, line width=0.3mm] (\x,0.75) rectangle (\x+0.75, 1.5);

        }
        
         \foreach \x in {0, 0.75} { 
         
  \draw [fill=red!55, line width=0.3mm] (\x +2.25,1.5) rectangle (\x+3, 2.25);
            \draw [fill=red!55, line width=0.3mm] (\x+2.25,2.25) rectangle (\x+3, 3);         
         
            \draw [fill=red!55, line width=0.3mm] (\x+2.25,0.75) rectangle (\x+3, 1.5);
        }

    \draw [fill=red!55, line width=0.3mm] (3,0) rectangle (3.75, 0.75);

\end{scope}

\begin{scope}[shift={(17.5,-17.25)}]
\foreach \x in {0, 0.75, 1.5, 2.25} {
            \draw [fill=blue!55, line width=0.3mm] (\x,0) rectangle (\x+0.75, 0.75);
            
    }
    
    \foreach \x in {0, 0.75, 1.5} { 
            \draw [fill=blue!55, line width=0.3mm] (\x,0.75) rectangle (\x+0.75, 1.5);
            
        }
        
         \foreach \x in {0, 0.75} { 
            
            \draw [fill=red!55, line width=0.3mm] (\x+2.25,0.75) rectangle (\x+3, 1.5);
            
              \draw [fill=red!55, line width=0.3mm] (\x +2.25,1.5) rectangle (\x+3, 2.25);
            \draw [fill=red!55, line width=0.3mm] (\x+2.25,2.25) rectangle (\x+3, 3);
        }

    \draw [fill=red!55, line width=0.3mm] (3,0) rectangle (3.75, 0.75);
    
     \foreach \x in {0, 0.75, 1.5} { 
     \foreach \y in {0, -0.75, -1.5}{
      \draw [fill=orange!55, line width=0.3mm] (\x,\y+3) rectangle (\x+0.75,\y+3.75);
      }
}

\begin{scope}[shift={(0,6.5)}]
\foreach \x in {0, 0.75, 1.5} { 
     \foreach \y in {0, -0.75, -1.5}{
      \draw [fill=green!55, line width=0.3mm] (\x,\y+3) rectangle (\x+0.75,\y+3.75);
      }
}
\end{scope}

\begin{scope}[shift={(0,13)}]
\foreach \x in {0, 0.75, 1.5} { 
     \foreach \y in {0, -0.75, -1.5}{
      \draw [fill=green!55, line width=0.3mm] (\x,\y+3) rectangle (\x+0.75,\y+3.75);
      }
}
\end{scope}

\begin{scope}[shift={(0,19.5)}]
\foreach \x in {0, 0.75, 1.5} { 
     \foreach \y in {0, -0.75, -1.5}{
      \draw [fill=green!55, line width=0.3mm] (\x,\y+3) rectangle (\x+0.75,\y+3.75);
      }
}
\end{scope}

\end{scope}

\begin{scope}[shift={(17.5,-23.75)}]
\foreach \x in {0, 0.75, 1.5, 2.25} {
            \draw [fill=blue!55, line width=0.3mm] (\x,0) rectangle (\x+0.75, 0.75);

    }
    
         \foreach \x in {0.75} { 
            
            \draw [fill=red!55, line width=0.3mm] (\x+2.25,0.75) rectangle (\x+3, 1.5);
            
              \draw [fill=red!55, line width=0.3mm] (\x +2.25,1.5) rectangle (\x+3, 2.25);
            \draw [fill=red!55, line width=0.3mm] (\x+2.25,2.25) rectangle (\x+3, 3);
        }

    \draw [fill=red!55, line width=0.3mm] (3,0) rectangle (3.75, 0.75);
    
     \foreach \x in {0, 0.75, 1.5, 2.25} { 
     \foreach \y in {0, -0.75, -1.5, -2.25}{
      \draw [fill=orange!55, line width=0.3mm] (\x,\y+3) rectangle (\x+0.75,\y+3.75);
      }
}

\end{scope}

\end{scope}


\begin{scope}[shift={(21,0)}]


\begin{scope}[shift={(17.5,-4.25)}]
\foreach \x in {0, 0.75, 1.5, 2.25} {
            \draw [fill=blue!55, line width=0.3mm] (\x,0) rectangle (\x+0.75, 0.75);

   }
    
    \foreach \x in {0, 0.75, 1.5} { 

        }

         \draw [fill=red!55, line width=0.3mm] (3,1.5) rectangle (3.75, 2.25);
            
             \draw [fill=red!55, line width=0.3mm] (3,2.25) rectangle (3.75, 3);
            
            \draw [fill=red!55, line width=0.3mm] (3,0.75) rectangle (3.75, 1.5);
 
    \draw [fill=red!55, line width=0.3mm] (3,0) rectangle (3.75, 0.75);
    
    
\end{scope}

\begin{scope}[shift={(17.5,-10.75)}]
\foreach \x in {0, 0.75, 1.5, 2.25} {
            \draw [fill=blue!55, line width=0.3mm] (\x,0) rectangle (\x+0.75, 0.75);
        
    }

      \draw [fill=red!55, line width=0.3mm] (3,1.5) rectangle (3.75, 2.25);
            \draw [fill=red!55, line width=0.3mm] (3,2.25) rectangle (3.75, 3);

            \draw [fill=red!55, line width=0.3mm] (3,0.75) rectangle (3.75, 1.5);
    
    \draw [fill=red!55, line width=0.3mm] (3,0) rectangle (3.75, 0.75);
    
\end{scope}

\begin{scope}[shift={(17.5,-17.25)}]
\foreach \x in {0, 0.75, 1.5, 2.25} {
            \draw [fill=blue!55, line width=0.3mm] (\x,0) rectangle (\x+0.75, 0.75);
            
    }

       \draw [fill=red!55, line width=0.3mm] (3,0.75) rectangle (3.75, 1.5);
            
              \draw [fill=red!55, line width=0.3mm] (3,1.5) rectangle (3.75, 2.25);
            \draw [fill=red!55, line width=0.3mm] (3,2.25) rectangle (3.75, 3);

    \draw [fill=red!55, line width=0.3mm] (3,0) rectangle (3.75, 0.75);

\end{scope}

\begin{scope}[shift={(17.5,-23.75)}]
\foreach \x in {0, 0.75, 1.5, 2.25} {
            \draw [fill=blue!55, line width=0.3mm] (\x,0) rectangle (\x+0.75, 0.75);
            
    }
    
         \foreach \x in {0.75} { 
            
            \draw [fill=red!55, line width=0.3mm] (\x+2.25,0.75) rectangle (\x+3, 1.5);
            
              \draw [fill=red!55, line width=0.3mm] (\x +2.25,1.5) rectangle (\x+3, 2.25);
            \draw [fill=red!55, line width=0.3mm] (\x+2.25,2.25) rectangle (\x+3, 3);
        }

    \draw [fill=red!55, line width=0.3mm] (3,0) rectangle (3.75, 0.75);
    
     \foreach \x in {0, 0.75, 1.5, 2.25} { 
     \foreach \y in {0, -0.75, -1.5, -2.25}{
      \draw [fill=orange!55, line width=0.3mm] (\x,\y+3) rectangle (\x+0.75,\y+3.75);
      }
}


\begin{scope}[shift={(0,6.5)}]
 \foreach \x in {0, 0.75, 1.5, 2.25} { 
     \foreach \y in {0, -0.75, -1.5, -2.25}{
      \draw [fill=green!55, line width=0.3mm] (\x,\y+3) rectangle (\x+0.75,\y+3.75);
      }
}
\end{scope}

\begin{scope}[shift={(0,13)}]
 \foreach \x in {0, 0.75, 1.5, 2.25} { 
     \foreach \y in {0, -0.75, -1.5, -2.25}{
      \draw [fill=green!55, line width=0.3mm] (\x,\y+3) rectangle (\x+0.75,\y+3.75);
      }
}
\end{scope}

\begin{scope}[shift={(0,19.5)}]
 \foreach \x in {0, 0.75, 1.5, 2.25} { 
     \foreach \y in {0, -0.75, -1.5, -2.25}{
      \draw [fill=green!55, line width=0.3mm] (\x,\y+3) rectangle (\x+0.75,\y+3.75);
      }
}
\end{scope}

\begin{scope}[shift={(0,26)}]
 \foreach \x in {0, 0.75, 1.5, 2.25} { 
     \foreach \y in {0, -0.75, -1.5, -2.25}{
      \draw [fill=green!55, line width=0.3mm] (\x,\y+3) rectangle (\x+0.75,\y+3.75);
      }
}
\end{scope}

\end{scope}

\end{scope}

\end{tikzpicture}
\vskip .8cm

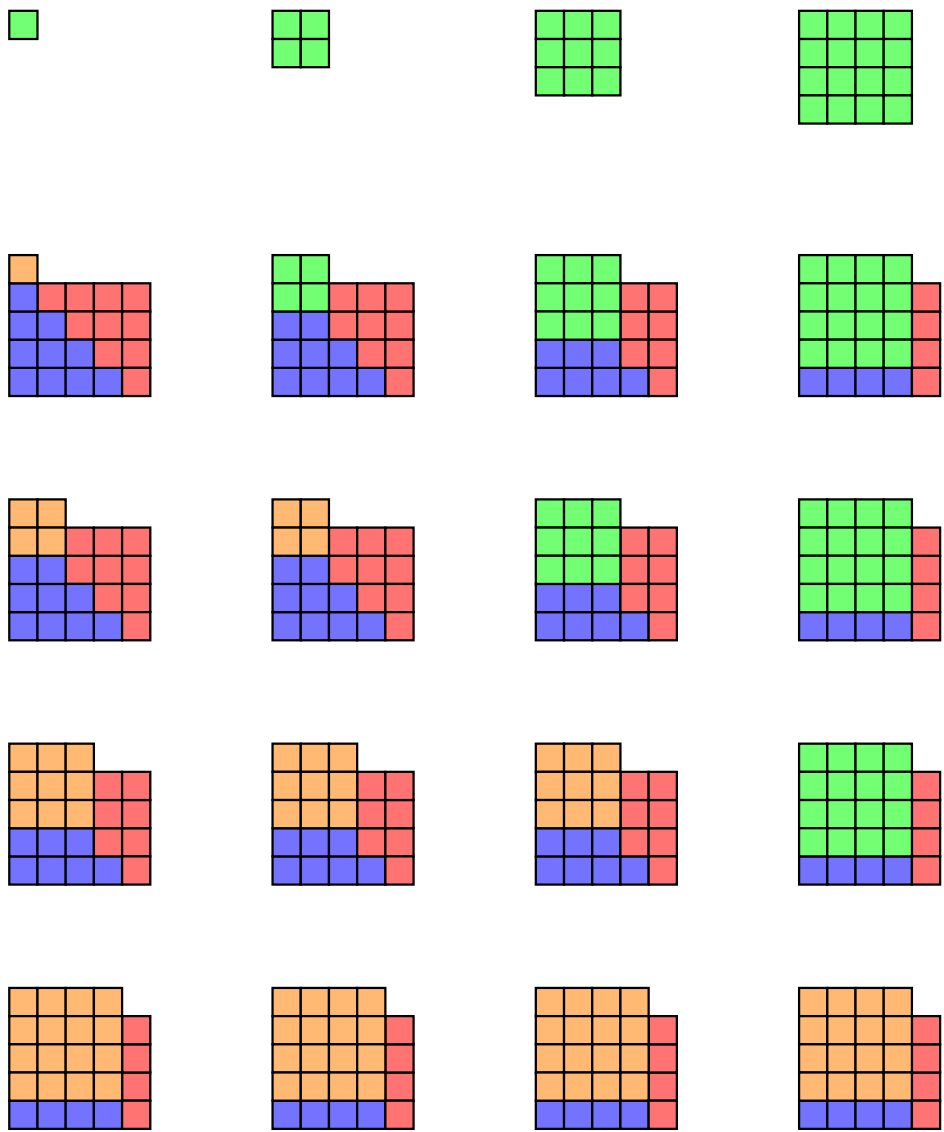
\captionof{figure}{First 4D section of four assembled 5D pyramids}
\label{First 4D Section}

\vskip .3cm

The following sections are obtained from the first one by successively removing layers. From the first section, we remove all squares of side length $1$ and the step of length $1$ in each triangle; the remaining figure is the second section. To obtain the next section, we then also remove all squares of side length 
$2$ and the steps of length $2$ in the triangles, and so on. Continuing in this way for side lengths 
$1, 2, \dots , n-1,$ the last section contains only squares of side length $n$ and steps of length $n$.\par
\vspace{5mm}
\noindent
\begin{tikzpicture}[scale=0.5]


\begin{scope}[shift={(0,0)}]

\begin{scope}[shift={(17.5,-4.25)}]
\foreach \x in {0, 0.75, 1.5, 2.25} {
            \draw [fill=blue!55, line width=0.3mm] (\x,0) rectangle (\x+0.75, 0.75);

    }
    
    \foreach \x in {0, 0.75, 1.5} { 
    
     \draw [fill=red!55, line width=0.3mm] (\x+1.5,2.25) rectangle (\x+2.25, 3);
            \draw [fill=blue!55, line width=0.3mm] (\x,0.75) rectangle (\x+0.75, 1.5);
            
            \draw [fill=red!55, line width=0.3mm] (\x +1.5,1.5) rectangle (\x+2.25, 2.25);
        }
        
         \foreach \x in {0, 0.75} { 
            \draw [fill=blue!55, line width=0.3mm] (\x,1.5) rectangle (\x+0.75, 2.25);
            \draw [fill=red!55, line width=0.3mm] (\x+2.25,0.75) rectangle (\x+3, 1.5);
        }

    \draw [fill=red!55, line width=0.3mm] (3,0) rectangle (3.75, 0.75);

    

\end{scope}

\begin{scope}[shift={(17.5,-10.75)}]
\foreach \x in {0, 0.75, 1.5, 2.25} {
            \draw [fill=blue!55, line width=0.3mm] (\x,0) rectangle (\x+0.75, 0.75);
            
    }
    
    \foreach \x in {0, 0.75, 1.5} { 
            \draw [fill=blue!55, line width=0.3mm] (\x,0.75) rectangle (\x+0.75, 1.5);
            
            \draw [fill=red!55, line width=0.3mm] (\x +1.5,1.5) rectangle (\x+2.25, 2.25);
            \draw [fill=red!55, line width=0.3mm] (\x+1.5,2.25) rectangle (\x+2.25, 3);
        }
        
         \foreach \x in {0, 0.75} { 
            \draw [fill=blue!55, line width=0.3mm] (\x,1.5) rectangle (\x+0.75, 2.25);
            \draw [fill=red!55, line width=0.3mm] (\x+2.25,0.75) rectangle (\x+3, 1.5);
        }
    
    \draw [fill=red!55, line width=0.3mm] (3,0) rectangle (3.75, 0.75);
    
     \foreach \x in {0, 0.75} { 
     \foreach \y in {0, -0.75}{
      \draw [fill=orange!55, line width=0.3mm] (\x,\y+3) rectangle (\x+0.75,\y+3.75);
      }
}

\end{scope}

\begin{scope}[shift={(17.5,-17.25)}]
\foreach \x in {0, 0.75, 1.5, 2.25} {
            \draw [fill=blue!55, line width=0.3mm] (\x,0) rectangle (\x+0.75, 0.75);
            
    }
    
    \foreach \x in {0, 0.75, 1.5} { 
            \draw [fill=blue!55, line width=0.3mm] (\x,0.75) rectangle (\x+0.75, 1.5);
            
        }
        
         \foreach \x in {0, 0.75} { 
            
            \draw [fill=red!55, line width=0.3mm] (\x+2.25,0.75) rectangle (\x+3, 1.5);
            
              \draw [fill=red!55, line width=0.3mm] (\x +2.25,1.5) rectangle (\x+3, 2.25);
            \draw [fill=red!55, line width=0.3mm] (\x+2.25,2.25) rectangle (\x+3, 3);
        }

    \draw [fill=red!55, line width=0.3mm] (3,0) rectangle (3.75, 0.75);
    
     \foreach \x in {0, 0.75, 1.5} { 
     \foreach \y in {0, -0.75, -1.5}{
      \draw [fill=orange!55, line width=0.3mm] (\x,\y+3) rectangle (\x+0.75,\y+3.75);
      }
}

\end{scope}

\begin{scope}[shift={(17.5,-23.75)}]
\foreach \x in {0, 0.75, 1.5, 2.25} {
            \draw [fill=blue!55, line width=0.3mm] (\x,0) rectangle (\x+0.75, 0.75);
            
    }
    
         \foreach \x in {0.75} { 
            
            \draw [fill=red!55, line width=0.3mm] (\x+2.25,0.75) rectangle (\x+3, 1.5);
            
              \draw [fill=red!55, line width=0.3mm] (\x +2.25,1.5) rectangle (\x+3, 2.25);
            \draw [fill=red!55, line width=0.3mm] (\x+2.25,2.25) rectangle (\x+3, 3);
        }
    
    \draw [fill=red!55, line width=0.3mm] (3,0) rectangle (3.75, 0.75);
    
     \foreach \x in {0, 0.75, 1.5, 2.25} { 
     \foreach \y in {0, -0.75, -1.5, -2.25}{
      \draw [fill=orange!55, line width=0.3mm] (\x,\y+3) rectangle (\x+0.75,\y+3.75);
      }
}

\end{scope}

\end{scope}


\begin{scope}[shift={(7,0)}]

\begin{scope}[shift={(17.5,-4.25)}]
\foreach \x in {0, 0.75, 1.5, 2.25} {
            \draw [fill=blue!55, line width=0.3mm] (\x,0) rectangle (\x+0.75, 0.75);
            
    }
    
    \foreach \x in {0, 0.75, 1.5} { 
            \draw [fill=blue!55, line width=0.3mm] (\x,0.75) rectangle (\x+0.75, 1.5);
            
            \draw [fill=red!55, line width=0.3mm] (\x +1.5,1.5) rectangle (\x+2.25, 2.25);
            
             \draw [fill=red!55, line width=0.3mm] (\x+1.5,2.25) rectangle (\x+2.25, 3);
        }
        
         \foreach \x in {0, 0.75} { 
            \draw [fill=blue!55, line width=0.3mm] (\x,1.5) rectangle (\x+0.75, 2.25);
            \draw [fill=red!55, line width=0.3mm] (\x+2.25,0.75) rectangle (\x+3, 1.5);
        }
    
    
    \draw [fill=red!55, line width=0.3mm] (3,0) rectangle (3.75, 0.75);
    
    

\end{scope}

\begin{scope}[shift={(17.5,-10.75)}]
\foreach \x in {0, 0.75, 1.5, 2.25} {
            \draw [fill=blue!55, line width=0.3mm] (\x,0) rectangle (\x+0.75, 0.75);
            
    }
    
    \foreach \x in {0, 0.75, 1.5} { 
            \draw [fill=blue!55, line width=0.3mm] (\x,0.75) rectangle (\x+0.75, 1.5);
            
            \draw [fill=red!55, line width=0.3mm] (\x +1.5,1.5) rectangle (\x+2.25, 2.25);
            \draw [fill=red!55, line width=0.3mm] (\x+1.5,2.25) rectangle (\x+2.25, 3);
        }
        
         \foreach \x in {0, 0.75} { 
            \draw [fill=blue!55, line width=0.3mm] (\x,1.5) rectangle (\x+0.75, 2.25);
            \draw [fill=red!55, line width=0.3mm] (\x+2.25,0.75) rectangle (\x+3, 1.5);
        }

    \draw [fill=red!55, line width=0.3mm] (3,0) rectangle (3.75, 0.75);
    
     \foreach \x in {0, 0.75} { 
     \foreach \y in {0, -0.75}{
      \draw [fill=orange!55, line width=0.3mm] (\x,\y+3) rectangle (\x+0.75,\y+3.75);
      }
}

\begin{scope}[shift={(0,6.5)}]
 \foreach \x in {0, 0.75} { 
     \foreach \y in {0, -0.75}{
      \draw [fill=green!55, line width=0.3mm] (\x,\y+3) rectangle (\x+0.75,\y+3.75);
      }
}
\end{scope}

\begin{scope}[shift={(0,13)}]
 \foreach \x in {0, 0.75} { 
     \foreach \y in {0, -0.75}{
      \draw [fill=green!55, line width=0.3mm] (\x,\y+3) rectangle (\x+0.75,\y+3.75);
      }
}
\end{scope}

\end{scope}

\begin{scope}[shift={(17.5,-17.25)}]
\foreach \x in {0, 0.75, 1.5, 2.25} {
            \draw [fill=blue!55, line width=0.3mm] (\x,0) rectangle (\x+0.75, 0.75);
            
    }
    
    \foreach \x in {0, 0.75, 1.5} { 
            \draw [fill=blue!55, line width=0.3mm] (\x,0.75) rectangle (\x+0.75, 1.5);
    
        }
        
         \foreach \x in {0, 0.75} { 
            
            \draw [fill=red!55, line width=0.3mm] (\x+2.25,0.75) rectangle (\x+3, 1.5);
            
              \draw [fill=red!55, line width=0.3mm] (\x +2.25,1.5) rectangle (\x+3, 2.25);
            \draw [fill=red!55, line width=0.3mm] (\x+2.25,2.25) rectangle (\x+3, 3);
        }

    \draw [fill=red!55, line width=0.3mm] (3,0) rectangle (3.75, 0.75);
    
     \foreach \x in {0, 0.75, 1.5} { 
     \foreach \y in {0, -0.75, -1.5}{
      \draw [fill=orange!55, line width=0.3mm] (\x,\y+3) rectangle (\x+0.75,\y+3.75);
      }
}

\end{scope}

\begin{scope}[shift={(17.5,-23.75)}]
\foreach \x in {0, 0.75, 1.5, 2.25} {
            \draw [fill=blue!55, line width=0.3mm] (\x,0) rectangle (\x+0.75, 0.75);
            
    }

         \foreach \x in {0.75} { 
            
            \draw [fill=red!55, line width=0.3mm] (\x+2.25,0.75) rectangle (\x+3, 1.5);
            
              \draw [fill=red!55, line width=0.3mm] (\x +2.25,1.5) rectangle (\x+3, 2.25);
            \draw [fill=red!55, line width=0.3mm] (\x+2.25,2.25) rectangle (\x+3, 3);
        }
    
    \draw [fill=red!55, line width=0.3mm] (3,0) rectangle (3.75, 0.75);
    
     \foreach \x in {0, 0.75, 1.5, 2.25} { 
     \foreach \y in {0, -0.75, -1.5, -2.25}{
      \draw [fill=orange!55, line width=0.3mm] (\x,\y+3) rectangle (\x+0.75,\y+3.75);
      }
}

\end{scope}

\end{scope}


\begin{scope}[shift={(14,0)}]


\begin{scope}[shift={(17.5,-4.25)}]
\foreach \x in {0, 0.75, 1.5, 2.25} {
            \draw [fill=blue!55, line width=0.3mm] (\x,0) rectangle (\x+0.75, 0.75);
            
   }
    
    \foreach \x in {0, 0.75, 1.5} { 
            \draw [fill=blue!55, line width=0.3mm] (\x,0.75) rectangle (\x+0.75, 1.5);
            
        }
        
         \foreach \x in {0, 0.75} { 
         
          \draw [fill=red!55, line width=0.3mm] (\x +2.25,1.5) rectangle (\x+3, 2.25);
            
             \draw [fill=red!55, line width=0.3mm] (\x+2.25,2.25) rectangle (\x+3, 3);
            \draw [fill=red!55, line width=0.3mm] (\x+2.25,0.75) rectangle (\x+3, 1.5);
        }
    
    
    \draw [fill=red!55, line width=0.3mm] (3,0) rectangle (3.75, 0.75);
    
    

\end{scope}

\begin{scope}[shift={(17.5,-10.75)}]
\foreach \x in {0, 0.75, 1.5, 2.25} {
            \draw [fill=blue!55, line width=0.3mm] (\x,0) rectangle (\x+0.75, 0.75);
            
    }
    
    \foreach \x in {0, 0.75, 1.5} { 
            \draw [fill=blue!55, line width=0.3mm] (\x,0.75) rectangle (\x+0.75, 1.5);
            
        }
        
         \foreach \x in {0, 0.75} { 
         
  \draw [fill=red!55, line width=0.3mm] (\x +2.25,1.5) rectangle (\x+3, 2.25);
            \draw [fill=red!55, line width=0.3mm] (\x+2.25,2.25) rectangle (\x+3, 3);         
         
            \draw [fill=red!55, line width=0.3mm] (\x+2.25,0.75) rectangle (\x+3, 1.5);
        }
    
    \draw [fill=red!55, line width=0.3mm] (3,0) rectangle (3.75, 0.75);
    
\end{scope}

\begin{scope}[shift={(17.5,-17.25)}]
\foreach \x in {0, 0.75, 1.5, 2.25} {
            \draw [fill=blue!55, line width=0.3mm] (\x,0) rectangle (\x+0.75, 0.75);
            
    }
    
    \foreach \x in {0, 0.75, 1.5} { 
            \draw [fill=blue!55, line width=0.3mm] (\x,0.75) rectangle (\x+0.75, 1.5);
            
        }
        
         \foreach \x in {0, 0.75} { 
            
            \draw [fill=red!55, line width=0.3mm] (\x+2.25,0.75) rectangle (\x+3, 1.5);
            
              \draw [fill=red!55, line width=0.3mm] (\x +2.25,1.5) rectangle (\x+3, 2.25);
            \draw [fill=red!55, line width=0.3mm] (\x+2.25,2.25) rectangle (\x+3, 3);
        }
    
    \draw [fill=red!55, line width=0.3mm] (3,0) rectangle (3.75, 0.75);
    
     \foreach \x in {0, 0.75, 1.5} { 
     \foreach \y in {0, -0.75, -1.5}{
      \draw [fill=orange!55, line width=0.3mm] (\x,\y+3) rectangle (\x+0.75,\y+3.75);
      }
}

\begin{scope}[shift={(0,6.5)}]
\foreach \x in {0, 0.75, 1.5} { 
     \foreach \y in {0, -0.75, -1.5}{
      \draw [fill=green!55, line width=0.3mm] (\x,\y+3) rectangle (\x+0.75,\y+3.75);
      }
}
\end{scope}

\begin{scope}[shift={(0,13)}]
\foreach \x in {0, 0.75, 1.5} { 
     \foreach \y in {0, -0.75, -1.5}{
      \draw [fill=green!55, line width=0.3mm] (\x,\y+3) rectangle (\x+0.75,\y+3.75);
      }
}
\end{scope}

\begin{scope}[shift={(0,19.5)}]
\foreach \x in {0, 0.75, 1.5} { 
     \foreach \y in {0, -0.75, -1.5}{
      \draw [fill=green!55, line width=0.3mm] (\x,\y+3) rectangle (\x+0.75,\y+3.75);
      }
}
\end{scope}

\end{scope}

\begin{scope}[shift={(17.5,-23.75)}]
\foreach \x in {0, 0.75, 1.5, 2.25} {
            \draw [fill=blue!55, line width=0.3mm] (\x,0) rectangle (\x+0.75, 0.75);
       
    }
    
         \foreach \x in {0.75} { 
            
            \draw [fill=red!55, line width=0.3mm] (\x+2.25,0.75) rectangle (\x+3, 1.5);
            
              \draw [fill=red!55, line width=0.3mm] (\x +2.25,1.5) rectangle (\x+3, 2.25);
            \draw [fill=red!55, line width=0.3mm] (\x+2.25,2.25) rectangle (\x+3, 3);
        }
    
    \draw [fill=red!55, line width=0.3mm] (3,0) rectangle (3.75, 0.75);
    
     \foreach \x in {0, 0.75, 1.5, 2.25} { 
     \foreach \y in {0, -0.75, -1.5, -2.25}{
      \draw [fill=orange!55, line width=0.3mm] (\x,\y+3) rectangle (\x+0.75,\y+3.75);
      }
}

\end{scope}

\end{scope}


\begin{scope}[shift={(21,0)}]


\begin{scope}[shift={(17.5,-4.25)}]
\foreach \x in {0, 0.75, 1.5, 2.25} {
            \draw [fill=blue!55, line width=0.3mm] (\x,0) rectangle (\x+0.75, 0.75);
            
   }
    
    \foreach \x in {0, 0.75, 1.5} { 
            
        }
        
         \draw [fill=red!55, line width=0.3mm] (3,1.5) rectangle (3.75, 2.25);
            
             \draw [fill=red!55, line width=0.3mm] (3,2.25) rectangle (3.75, 3);
            
            \draw [fill=red!55, line width=0.3mm] (3,0.75) rectangle (3.75, 1.5);
 
    \draw [fill=red!55, line width=0.3mm] (3,0) rectangle (3.75, 0.75);
    
    
     
\end{scope}

\begin{scope}[shift={(17.5,-10.75)}]
\foreach \x in {0, 0.75, 1.5, 2.25} {
            \draw [fill=blue!55, line width=0.3mm] (\x,0) rectangle (\x+0.75, 0.75);

    }
    
      \draw [fill=red!55, line width=0.3mm] (3,1.5) rectangle (3.75, 2.25);
            \draw [fill=red!55, line width=0.3mm] (3,2.25) rectangle (3.75, 3);

            \draw [fill=red!55, line width=0.3mm] (3,0.75) rectangle (3.75, 1.5);

    \draw [fill=red!55, line width=0.3mm] (3,0) rectangle (3.75, 0.75);
    
\end{scope}

\begin{scope}[shift={(17.5,-17.25)}]
\foreach \x in {0, 0.75, 1.5, 2.25} {
            \draw [fill=blue!55, line width=0.3mm] (\x,0) rectangle (\x+0.75, 0.75);
            
    }

       \draw [fill=red!55, line width=0.3mm] (3,0.75) rectangle (3.75, 1.5);
            
              \draw [fill=red!55, line width=0.3mm] (3,1.5) rectangle (3.75, 2.25);
            \draw [fill=red!55, line width=0.3mm] (3,2.25) rectangle (3.75, 3);

    \draw [fill=red!55, line width=0.3mm] (3,0) rectangle (3.75, 0.75);

\end{scope}

\begin{scope}[shift={(17.5,-23.75)}]
\foreach \x in {0, 0.75, 1.5, 2.25} {
            \draw [fill=blue!55, line width=0.3mm] (\x,0) rectangle (\x+0.75, 0.75);
            
    }
    
         \foreach \x in {0.75} { 
            
            \draw [fill=red!55, line width=0.3mm] (\x+2.25,0.75) rectangle (\x+3, 1.5);
            
              \draw [fill=red!55, line width=0.3mm] (\x +2.25,1.5) rectangle (\x+3, 2.25);
            \draw [fill=red!55, line width=0.3mm] (\x+2.25,2.25) rectangle (\x+3, 3);
        }

    \draw [fill=red!55, line width=0.3mm] (3,0) rectangle (3.75, 0.75);
    
     \foreach \x in {0, 0.75, 1.5, 2.25} { 
     \foreach \y in {0, -0.75, -1.5, -2.25}{
      \draw [fill=orange!55, line width=0.3mm] (\x,\y+3) rectangle (\x+0.75,\y+3.75);
      }
}


\begin{scope}[shift={(0,6.5)}]
 \foreach \x in {0, 0.75, 1.5, 2.25} { 
     \foreach \y in {0, -0.75, -1.5, -2.25}{
      \draw [fill=green!55, line width=0.3mm] (\x,\y+3) rectangle (\x+0.75,\y+3.75);
      }
}
\end{scope}

\begin{scope}[shift={(0,13)}]
 \foreach \x in {0, 0.75, 1.5, 2.25} { 
     \foreach \y in {0, -0.75, -1.5, -2.25}{
      \draw [fill=green!55, line width=0.3mm] (\x,\y+3) rectangle (\x+0.75,\y+3.75);
      }
}
\end{scope}

\begin{scope}[shift={(0,19.5)}]
 \foreach \x in {0, 0.75, 1.5, 2.25} { 
     \foreach \y in {0, -0.75, -1.5, -2.25}{
      \draw [fill=green!55, line width=0.3mm] (\x,\y+3) rectangle (\x+0.75,\y+3.75);
      }
}
\end{scope}

\begin{scope}[shift={(0,26)}]
 \foreach \x in {0, 0.75, 1.5, 2.25} { 
     \foreach \y in {0, -0.75, -1.5, -2.25}{
      \draw [fill=green!55, line width=0.3mm] (\x,\y+3) rectangle (\x+0.75,\y+3.75);
      }
}
\end{scope}

\end{scope}

\end{scope}

\end{tikzpicture}
\vskip .8cm

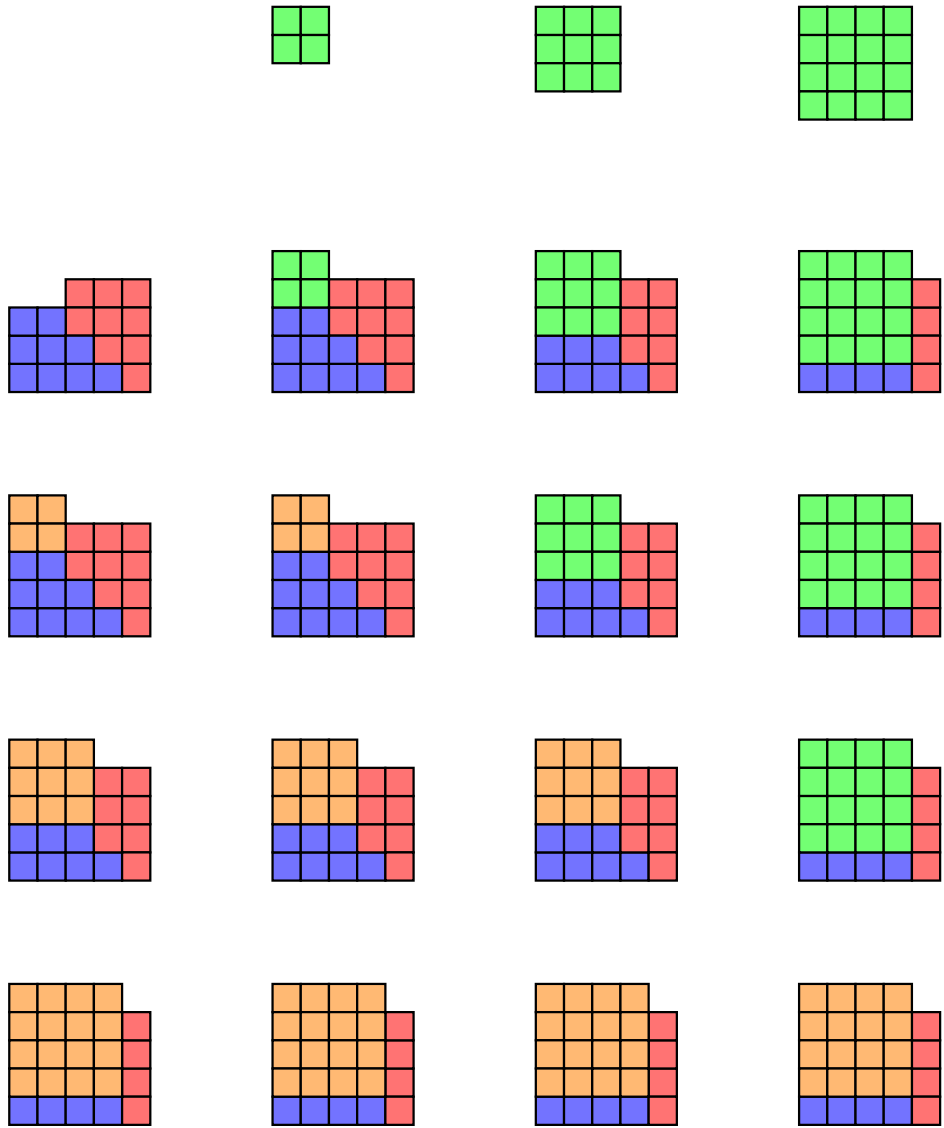
\captionof{figure}{Second 4D section of four assembled 5D pyramids}
\label{Second 4D Section}
\par
\vspace{5mm}
\noindent
\begin{tikzpicture}[scale=0.5]



\begin{scope}[shift={(0,0)}]

\begin{scope}[shift={(17.5,-4.25)}]
\foreach \x in {0, 0.75, 1.5, 2.25} {
            \draw [fill=blue!55, line width=0.3mm] (\x,0) rectangle (\x+0.75, 0.75);

    }
    
    \foreach \x in {0, 0.75, 1.5} {

            \draw [fill=blue!55, line width=0.3mm] (\x,0.75) rectangle (\x+0.75, 1.5);
           
        }
        
         \foreach \x in {0, 0.75} { 
         
          \draw [fill=red!55, line width=0.3mm] (\x+2.25,2.25) rectangle (\x+3, 3);
            \draw [fill=red!55, line width=0.3mm] (\x +2.25,1.5) rectangle (\x+3, 2.25);
            
            \draw [fill=red!55, line width=0.3mm] (\x+2.25,0.75) rectangle (\x+3, 1.5);
        }

    \draw [fill=red!55, line width=0.3mm] (3,0) rectangle (3.75, 0.75);
    
    

\end{scope}

\begin{scope}[shift={(17.5,-10.75)}]
\foreach \x in {0, 0.75, 1.5, 2.25} {
            \draw [fill=blue!55, line width=0.3mm] (\x,0) rectangle (\x+0.75, 0.75);
            
    }
    
    \foreach \x in {0, 0.75, 1.5} { 
            \draw [fill=blue!55, line width=0.3mm] (\x,0.75) rectangle (\x+0.75, 1.5);
            
        }
        
         \foreach \x in {0, 0.75} { 
         
         \draw [fill=red!55, line width=0.3mm] (\x +2.25,1.5) rectangle (\x+3, 2.25);
            \draw [fill=red!55, line width=0.3mm] (\x+2.25,2.25) rectangle (\x+3, 3);
            
            \draw [fill=red!55, line width=0.3mm] (\x+2.25,0.75) rectangle (\x+3, 1.5);
        }
    
    \draw [fill=red!55, line width=0.3mm] (3,0) rectangle (3.75, 0.75);

\end{scope}

\begin{scope}[shift={(17.5,-17.25)}]
\foreach \x in {0, 0.75, 1.5, 2.25} {
            \draw [fill=blue!55, line width=0.3mm] (\x,0) rectangle (\x+0.75, 0.75);
            
    }
    
    \foreach \x in {0, 0.75, 1.5} { 
            \draw [fill=blue!55, line width=0.3mm] (\x,0.75) rectangle (\x+0.75, 1.5);

        }
        
         \foreach \x in {0, 0.75} { 
            
            \draw [fill=red!55, line width=0.3mm] (\x+2.25,0.75) rectangle (\x+3, 1.5);
            
              \draw [fill=red!55, line width=0.3mm] (\x +2.25,1.5) rectangle (\x+3, 2.25);
            \draw [fill=red!55, line width=0.3mm] (\x+2.25,2.25) rectangle (\x+3, 3);
        }

    \draw [fill=red!55, line width=0.3mm] (3,0) rectangle (3.75, 0.75);

     \foreach \x in {0, 0.75, 1.5} { 
     \foreach \y in {0, -0.75, -1.5}{
      \draw [fill=orange!55, line width=0.3mm] (\x,\y+3) rectangle (\x+0.75,\y+3.75);
      }
}

\end{scope}

\begin{scope}[shift={(17.5,-23.75)}]
\foreach \x in {0, 0.75, 1.5, 2.25} {
            \draw [fill=blue!55, line width=0.3mm] (\x,0) rectangle (\x+0.75, 0.75);
            
    }
    
         \foreach \x in {0.75} { 
            
            \draw [fill=red!55, line width=0.3mm] (\x+2.25,0.75) rectangle (\x+3, 1.5);
            
              \draw [fill=red!55, line width=0.3mm] (\x +2.25,1.5) rectangle (\x+3, 2.25);
            \draw [fill=red!55, line width=0.3mm] (\x+2.25,2.25) rectangle (\x+3, 3);
        }

    \draw [fill=red!55, line width=0.3mm] (3,0) rectangle (3.75, 0.75);
    
     \foreach \x in {0, 0.75, 1.5, 2.25} { 
     \foreach \y in {0, -0.75, -1.5, -2.25}{
      \draw [fill=orange!55, line width=0.3mm] (\x,\y+3) rectangle (\x+0.75,\y+3.75);
      }
}

\end{scope}

\end{scope}


\begin{scope}[shift={(7,0)}]

\begin{scope}[shift={(17.5,-4.25)}]
\foreach \x in {0, 0.75, 1.5, 2.25} {
            \draw [fill=blue!55, line width=0.3mm] (\x,0) rectangle (\x+0.75, 0.75);

    }
    
    \foreach \x in {0, 0.75, 1.5} { 
            \draw [fill=blue!55, line width=0.3mm] (\x,0.75) rectangle (\x+0.75, 1.5);

        }
        
         \foreach \x in {0, 0.75} { 
            \draw [fill=red!55, line width=0.3mm] (\x +2.25,1.5) rectangle (\x+3, 2.25);
            
             \draw [fill=red!55, line width=0.3mm] (\x+2.25,2.25) rectangle (\x+3, 3);
            \draw [fill=red!55, line width=0.3mm] (\x+2.25,0.75) rectangle (\x+3, 1.5);
        }
    
    
    \draw [fill=red!55, line width=0.3mm] (3,0) rectangle (3.75, 0.75);
    
    

\end{scope}

\begin{scope}[shift={(17.5,-10.75)}]
\foreach \x in {0, 0.75, 1.5, 2.25} {
            \draw [fill=blue!55, line width=0.3mm] (\x,0) rectangle (\x+0.75, 0.75);
            
    }
    
    \foreach \x in {0, 0.75, 1.5} { 
            \draw [fill=blue!55, line width=0.3mm] (\x,0.75) rectangle (\x+0.75, 1.5);
            
        }
        
         \foreach \x in {0, 0.75} { 
             \draw [fill=red!55, line width=0.3mm] (\x +2.25,1.5) rectangle (\x+3, 2.25);
            \draw [fill=red!55, line width=0.3mm] (\x+2.25,2.25) rectangle (\x+3, 3);
            \draw [fill=red!55, line width=0.3mm] (\x+2.25,0.75) rectangle (\x+3, 1.5);
        }

    \draw [fill=red!55, line width=0.3mm] (3,0) rectangle (3.75, 0.75);
    

\end{scope}

\begin{scope}[shift={(17.5,-17.25)}]
\foreach \x in {0, 0.75, 1.5, 2.25} {
            \draw [fill=blue!55, line width=0.3mm] (\x,0) rectangle (\x+0.75, 0.75);
       
    }
    
    \foreach \x in {0, 0.75, 1.5} { 
            \draw [fill=blue!55, line width=0.3mm] (\x,0.75) rectangle (\x+0.75, 1.5);

        }
        
         \foreach \x in {0, 0.75} { 
            
            \draw [fill=red!55, line width=0.3mm] (\x+2.25,0.75) rectangle (\x+3, 1.5);
            
              \draw [fill=red!55, line width=0.3mm] (\x +2.25,1.5) rectangle (\x+3, 2.25);
            \draw [fill=red!55, line width=0.3mm] (\x+2.25,2.25) rectangle (\x+3, 3);
        }

    \draw [fill=red!55, line width=0.3mm] (3,0) rectangle (3.75, 0.75);
    
     \foreach \x in {0, 0.75, 1.5} { 
     \foreach \y in {0, -0.75, -1.5}{
      \draw [fill=orange!55, line width=0.3mm] (\x,\y+3) rectangle (\x+0.75,\y+3.75);
      }
}

\end{scope}

\begin{scope}[shift={(17.5,-23.75)}]
\foreach \x in {0, 0.75, 1.5, 2.25} {
            \draw [fill=blue!55, line width=0.3mm] (\x,0) rectangle (\x+0.75, 0.75);
            
    }
    
         \foreach \x in {0.75} { 
            
            \draw [fill=red!55, line width=0.3mm] (\x+2.25,0.75) rectangle (\x+3, 1.5);
            
              \draw [fill=red!55, line width=0.3mm] (\x +2.25,1.5) rectangle (\x+3, 2.25);
            \draw [fill=red!55, line width=0.3mm] (\x+2.25,2.25) rectangle (\x+3, 3);
        }
    
    \draw [fill=red!55, line width=0.3mm] (3,0) rectangle (3.75, 0.75);
    
     \foreach \x in {0, 0.75, 1.5, 2.25} { 
     \foreach \y in {0, -0.75, -1.5, -2.25}{
      \draw [fill=orange!55, line width=0.3mm] (\x,\y+3) rectangle (\x+0.75,\y+3.75);
      }
}

\end{scope}

\end{scope}


\begin{scope}[shift={(14,0)}]


\begin{scope}[shift={(17.5,-4.25)}]
\foreach \x in {0, 0.75, 1.5, 2.25} {
            \draw [fill=blue!55, line width=0.3mm] (\x,0) rectangle (\x+0.75, 0.75);
            
   }
    
    \foreach \x in {0, 0.75, 1.5} { 
            \draw [fill=blue!55, line width=0.3mm] (\x,0.75) rectangle (\x+0.75, 1.5);
            
        }
        
         \foreach \x in {0, 0.75} { 
         
          \draw [fill=red!55, line width=0.3mm] (\x +2.25,1.5) rectangle (\x+3, 2.25);
            
             \draw [fill=red!55, line width=0.3mm] (\x+2.25,2.25) rectangle (\x+3, 3);
            \draw [fill=red!55, line width=0.3mm] (\x+2.25,0.75) rectangle (\x+3, 1.5);
        }
    
    
    \draw [fill=red!55, line width=0.3mm] (3,0) rectangle (3.75, 0.75);
    
    

\end{scope}

\begin{scope}[shift={(17.5,-10.75)}]
\foreach \x in {0, 0.75, 1.5, 2.25} {
            \draw [fill=blue!55, line width=0.3mm] (\x,0) rectangle (\x+0.75, 0.75);
            
    }
    
    \foreach \x in {0, 0.75, 1.5} { 
            \draw [fill=blue!55, line width=0.3mm] (\x,0.75) rectangle (\x+0.75, 1.5);
            
        }
        
         \foreach \x in {0, 0.75} { 
         
  \draw [fill=red!55, line width=0.3mm] (\x +2.25,1.5) rectangle (\x+3, 2.25);
            \draw [fill=red!55, line width=0.3mm] (\x+2.25,2.25) rectangle (\x+3, 3);         
         
            \draw [fill=red!55, line width=0.3mm] (\x+2.25,0.75) rectangle (\x+3, 1.5);
        }
    
    \draw [fill=red!55, line width=0.3mm] (3,0) rectangle (3.75, 0.75);
    
\end{scope}

\begin{scope}[shift={(17.5,-17.25)}]
\foreach \x in {0, 0.75, 1.5, 2.25} {
            \draw [fill=blue!55, line width=0.3mm] (\x,0) rectangle (\x+0.75, 0.75);
            
    }
    
    \foreach \x in {0, 0.75, 1.5} { 
            \draw [fill=blue!55, line width=0.3mm] (\x,0.75) rectangle (\x+0.75, 1.5);
            
        }
        
         \foreach \x in {0, 0.75} { 
            
            \draw [fill=red!55, line width=0.3mm] (\x+2.25,0.75) rectangle (\x+3, 1.5);
            
              \draw [fill=red!55, line width=0.3mm] (\x +2.25,1.5) rectangle (\x+3, 2.25);
            \draw [fill=red!55, line width=0.3mm] (\x+2.25,2.25) rectangle (\x+3, 3);
        }

    \draw [fill=red!55, line width=0.3mm] (3,0) rectangle (3.75, 0.75);
    
     \foreach \x in {0, 0.75, 1.5} { 
     \foreach \y in {0, -0.75, -1.5}{
      \draw [fill=orange!55, line width=0.3mm] (\x,\y+3) rectangle (\x+0.75,\y+3.75);
      }
}

\begin{scope}[shift={(0,6.5)}]
\foreach \x in {0, 0.75, 1.5} { 
     \foreach \y in {0, -0.75, -1.5}{
      \draw [fill=green!55, line width=0.3mm] (\x,\y+3) rectangle (\x+0.75,\y+3.75);
      }
}
\end{scope}

\begin{scope}[shift={(0,13)}]
\foreach \x in {0, 0.75, 1.5} { 
     \foreach \y in {0, -0.75, -1.5}{
      \draw [fill=green!55, line width=0.3mm] (\x,\y+3) rectangle (\x+0.75,\y+3.75);
      }
}
\end{scope}

\begin{scope}[shift={(0,19.5)}]
\foreach \x in {0, 0.75, 1.5} { 
     \foreach \y in {0, -0.75, -1.5}{
      \draw [fill=green!55, line width=0.3mm] (\x,\y+3) rectangle (\x+0.75,\y+3.75);
      }
}
\end{scope}

\end{scope}

\begin{scope}[shift={(17.5,-23.75)}]
\foreach \x in {0, 0.75, 1.5, 2.25} {
            \draw [fill=blue!55, line width=0.3mm] (\x,0) rectangle (\x+0.75, 0.75);
            
    }
    
         \foreach \x in {0.75} { 
            
            \draw [fill=red!55, line width=0.3mm] (\x+2.25,0.75) rectangle (\x+3, 1.5);
            
              \draw [fill=red!55, line width=0.3mm] (\x +2.25,1.5) rectangle (\x+3, 2.25);
            \draw [fill=red!55, line width=0.3mm] (\x+2.25,2.25) rectangle (\x+3, 3);
        }

    \draw [fill=red!55, line width=0.3mm] (3,0) rectangle (3.75, 0.75);
    
     \foreach \x in {0, 0.75, 1.5, 2.25} { 
     \foreach \y in {0, -0.75, -1.5, -2.25}{
      \draw [fill=orange!55, line width=0.3mm] (\x,\y+3) rectangle (\x+0.75,\y+3.75);
      }
}

\end{scope}
\end{scope}


\begin{scope}[shift={(21,0)}]


\begin{scope}[shift={(17.5,-4.25)}]
\foreach \x in {0, 0.75, 1.5, 2.25} {
            \draw [fill=blue!55, line width=0.3mm] (\x,0) rectangle (\x+0.75, 0.75);
            
   }
    
    \foreach \x in {0, 0.75, 1.5} { 

        }

         \draw [fill=red!55, line width=0.3mm] (3,1.5) rectangle (3.75, 2.25);
            
             \draw [fill=red!55, line width=0.3mm] (3,2.25) rectangle (3.75, 3);
            
            \draw [fill=red!55, line width=0.3mm] (3,0.75) rectangle (3.75, 1.5);
 
    \draw [fill=red!55, line width=0.3mm] (3,0) rectangle (3.75, 0.75);
    
    

\end{scope}

\begin{scope}[shift={(17.5,-10.75)}]
\foreach \x in {0, 0.75, 1.5, 2.25} {
            \draw [fill=blue!55, line width=0.3mm] (\x,0) rectangle (\x+0.75, 0.75);
       
    }
    
      \draw [fill=red!55, line width=0.3mm] (3,1.5) rectangle (3.75, 2.25);
            \draw [fill=red!55, line width=0.3mm] (3,2.25) rectangle (3.75, 3);

            \draw [fill=red!55, line width=0.3mm] (3,0.75) rectangle (3.75, 1.5);

    \draw [fill=red!55, line width=0.3mm] (3,0) rectangle (3.75, 0.75);
    
\end{scope}

\begin{scope}[shift={(17.5,-17.25)}]
\foreach \x in {0, 0.75, 1.5, 2.25} {
            \draw [fill=blue!55, line width=0.3mm] (\x,0) rectangle (\x+0.75, 0.75);
            
    }
    
       \draw [fill=red!55, line width=0.3mm] (3,0.75) rectangle (3.75, 1.5);
            
              \draw [fill=red!55, line width=0.3mm] (3,1.5) rectangle (3.75, 2.25);
            \draw [fill=red!55, line width=0.3mm] (3,2.25) rectangle (3.75, 3);

    \draw [fill=red!55, line width=0.3mm] (3,0) rectangle (3.75, 0.75);
    
\end{scope}

\begin{scope}[shift={(17.5,-23.75)}]
\foreach \x in {0, 0.75, 1.5, 2.25} {
            \draw [fill=blue!55, line width=0.3mm] (\x,0) rectangle (\x+0.75, 0.75);

    }
    
         \foreach \x in {0.75} { 
            
            \draw [fill=red!55, line width=0.3mm] (\x+2.25,0.75) rectangle (\x+3, 1.5);
            
              \draw [fill=red!55, line width=0.3mm] (\x +2.25,1.5) rectangle (\x+3, 2.25);
            \draw [fill=red!55, line width=0.3mm] (\x+2.25,2.25) rectangle (\x+3, 3);
        }

    \draw [fill=red!55, line width=0.3mm] (3,0) rectangle (3.75, 0.75);
    
     \foreach \x in {0, 0.75, 1.5, 2.25} { 
     \foreach \y in {0, -0.75, -1.5, -2.25}{
      \draw [fill=orange!55, line width=0.3mm] (\x,\y+3) rectangle (\x+0.75,\y+3.75);
      }
}


\begin{scope}[shift={(0,6.5)}]
 \foreach \x in {0, 0.75, 1.5, 2.25} { 
     \foreach \y in {0, -0.75, -1.5, -2.25}{
      \draw [fill=green!55, line width=0.3mm] (\x,\y+3) rectangle (\x+0.75,\y+3.75);
      }
}
\end{scope}

\begin{scope}[shift={(0,13)}]
 \foreach \x in {0, 0.75, 1.5, 2.25} { 
     \foreach \y in {0, -0.75, -1.5, -2.25}{
      \draw [fill=green!55, line width=0.3mm] (\x,\y+3) rectangle (\x+0.75,\y+3.75);
      }
}
\end{scope}

\begin{scope}[shift={(0,19.5)}]
 \foreach \x in {0, 0.75, 1.5, 2.25} { 
     \foreach \y in {0, -0.75, -1.5, -2.25}{
      \draw [fill=green!55, line width=0.3mm] (\x,\y+3) rectangle (\x+0.75,\y+3.75);
      }
}
\end{scope}

\begin{scope}[shift={(0,26)}]
 \foreach \x in {0, 0.75, 1.5, 2.25} { 
     \foreach \y in {0, -0.75, -1.5, -2.25}{
      \draw [fill=green!55, line width=0.3mm] (\x,\y+3) rectangle (\x+0.75,\y+3.75);
      }
}
\end{scope}

\end{scope}

\end{scope}

\end{tikzpicture}
\vskip .8cm

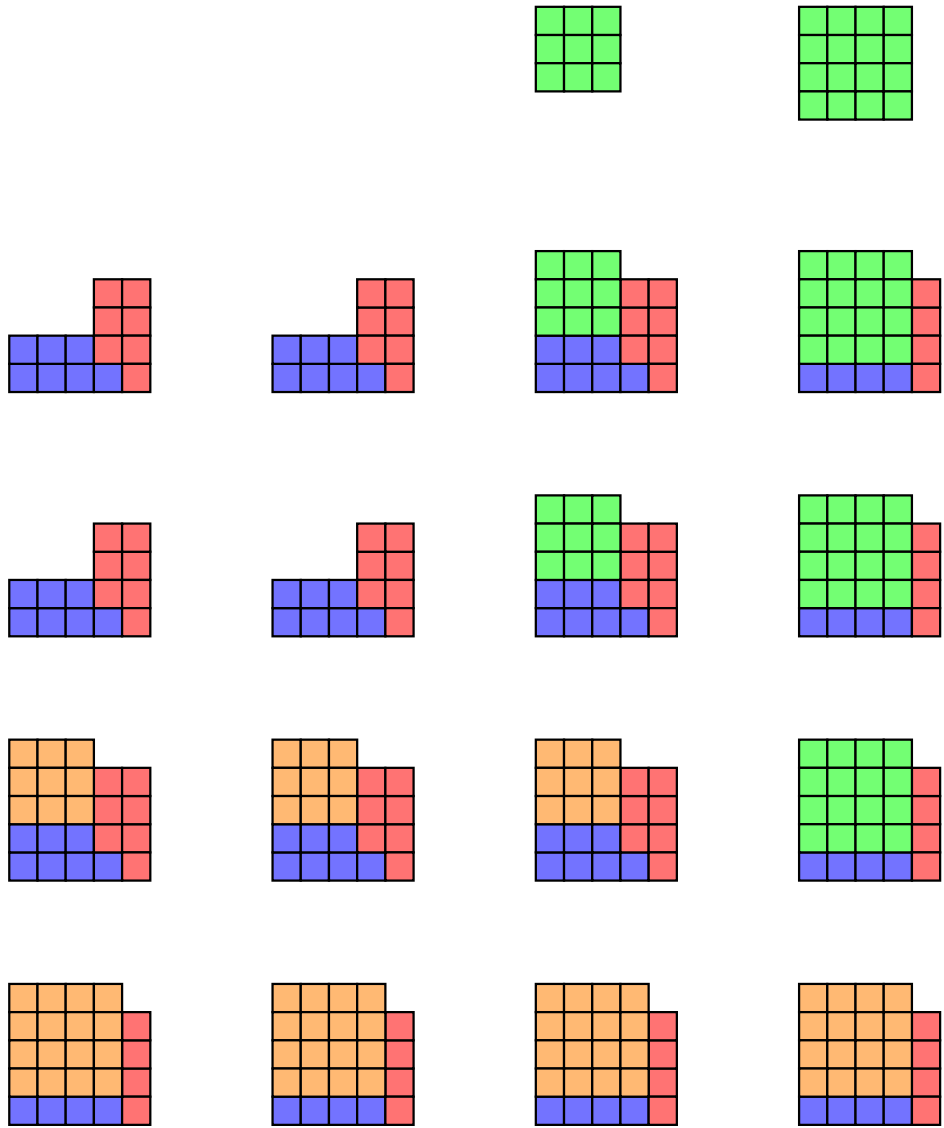
\captionof{figure}{Third 4D section of four assembled 5D pyramids}
\label{Third 4D Section}
\par
\vspace{5mm}
\noindent
\begin{tikzpicture}[scale=0.5]



\begin{scope}[shift={(0,0)}]

\begin{scope}[shift={(17.5,-4.25)}]
\foreach \x in {0, 0.75, 1.5, 2.25} {
            \draw [fill=blue!55, line width=0.3mm] (\x,0) rectangle (\x+0.75, 0.75);

    }
    
    \foreach \x in {0, 0.75, 1.5} {

        }
        
         \foreach \x in {0.75} { 
         
          \draw [fill=red!55, line width=0.3mm] (\x+2.25,2.25) rectangle (\x+3, 3);
            \draw [fill=red!55, line width=0.3mm] (\x +2.25,1.5) rectangle (\x+3, 2.25);
            
            \draw [fill=red!55, line width=0.3mm] (\x+2.25,0.75) rectangle (\x+3, 1.5);
        }

    \draw [fill=red!55, line width=0.3mm] (3,0) rectangle (3.75, 0.75);

    

\end{scope}

\begin{scope}[shift={(17.5,-10.75)}]
\foreach \x in {0, 0.75, 1.5, 2.25} {
            \draw [fill=blue!55, line width=0.3mm] (\x,0) rectangle (\x+0.75, 0.75);

    }
    
    \foreach \x in {0, 0.75, 1.5} {

        }
        
         \foreach \x in {0.75} { 
         
         \draw [fill=red!55, line width=0.3mm] (\x +2.25,1.5) rectangle (\x+3, 2.25);
            \draw [fill=red!55, line width=0.3mm] (\x+2.25,2.25) rectangle (\x+3, 3);
            
            \draw [fill=red!55, line width=0.3mm] (\x+2.25,0.75) rectangle (\x+3, 1.5);
        }

    \draw [fill=red!55, line width=0.3mm] (3,0) rectangle (3.75, 0.75);

\end{scope}

\begin{scope}[shift={(17.5,-17.25)}]
\foreach \x in {0, 0.75, 1.5, 2.25} {
            \draw [fill=blue!55, line width=0.3mm] (\x,0) rectangle (\x+0.75, 0.75);

    }
    
    \foreach \x in {0, 0.75, 1.5} {

        }
        
         \foreach \x in {0.75} { 
            
            \draw [fill=red!55, line width=0.3mm] (\x+2.25,0.75) rectangle (\x+3, 1.5);
            
              \draw [fill=red!55, line width=0.3mm] (\x +2.25,1.5) rectangle (\x+3, 2.25);
            \draw [fill=red!55, line width=0.3mm] (\x+2.25,2.25) rectangle (\x+3, 3);
        }

    \draw [fill=red!55, line width=0.3mm] (3,0) rectangle (3.75, 0.75);


\end{scope}

\begin{scope}[shift={(17.5,-23.75)}]
\foreach \x in {0, 0.75, 1.5, 2.25} {
            \draw [fill=blue!55, line width=0.3mm] (\x,0) rectangle (\x+0.75, 0.75);

    }

         \foreach \x in {0.75} { 
            
            \draw [fill=red!55, line width=0.3mm] (\x+2.25,0.75) rectangle (\x+3, 1.5);
            
              \draw [fill=red!55, line width=0.3mm] (\x +2.25,1.5) rectangle (\x+3, 2.25);
            \draw [fill=red!55, line width=0.3mm] (\x+2.25,2.25) rectangle (\x+3, 3);
        }

    \draw [fill=red!55, line width=0.3mm] (3,0) rectangle (3.75, 0.75);

     \foreach \x in {0, 0.75, 1.5, 2.25} { 
     \foreach \y in {0, -0.75, -1.5, -2.25}{
      \draw [fill=orange!55, line width=0.3mm] (\x,\y+3) rectangle (\x+0.75,\y+3.75);
      }
}

\end{scope}

\end{scope}


\begin{scope}[shift={(7,0)}]

\begin{scope}[shift={(17.5,-4.25)}]
\foreach \x in {0, 0.75, 1.5, 2.25} {
            \draw [fill=blue!55, line width=0.3mm] (\x,0) rectangle (\x+0.75, 0.75);

    }
    
    \foreach \x in {0, 0.75, 1.5} {

        }
        
         \foreach \x in {0.75} { 
            \draw [fill=red!55, line width=0.3mm] (\x +2.25,1.5) rectangle (\x+3, 2.25);
            
             \draw [fill=red!55, line width=0.3mm] (\x+2.25,2.25) rectangle (\x+3, 3);
            \draw [fill=red!55, line width=0.3mm] (\x+2.25,0.75) rectangle (\x+3, 1.5);
        }
    
    
    \draw [fill=red!55, line width=0.3mm] (3,0) rectangle (3.75, 0.75);

    

\end{scope}

\begin{scope}[shift={(17.5,-10.75)}]
\foreach \x in {0, 0.75, 1.5, 2.25} {
            \draw [fill=blue!55, line width=0.3mm] (\x,0) rectangle (\x+0.75, 0.75);

    }
    
    \foreach \x in {0, 0.75, 1.5} {

        }
        
         \foreach \x in {0.75} { 
             \draw [fill=red!55, line width=0.3mm] (\x +2.25,1.5) rectangle (\x+3, 2.25);
            \draw [fill=red!55, line width=0.3mm] (\x+2.25,2.25) rectangle (\x+3, 3);
            \draw [fill=red!55, line width=0.3mm] (\x+2.25,0.75) rectangle (\x+3, 1.5);
        }

    \draw [fill=red!55, line width=0.3mm] (3,0) rectangle (3.75, 0.75);


\end{scope}

\begin{scope}[shift={(17.5,-17.25)}]
\foreach \x in {0, 0.75, 1.5, 2.25} {
            \draw [fill=blue!55, line width=0.3mm] (\x,0) rectangle (\x+0.75, 0.75);

    }
    
    \foreach \x in {0, 0.75, 1.5} {

        }
        
         \foreach \x in {0.75} { 
            
            \draw [fill=red!55, line width=0.3mm] (\x+2.25,0.75) rectangle (\x+3, 1.5);
            
              \draw [fill=red!55, line width=0.3mm] (\x +2.25,1.5) rectangle (\x+3, 2.25);
            \draw [fill=red!55, line width=0.3mm] (\x+2.25,2.25) rectangle (\x+3, 3);
        }

    \draw [fill=red!55, line width=0.3mm] (3,0) rectangle (3.75, 0.75);


\end{scope}

\begin{scope}[shift={(17.5,-23.75)}]
\foreach \x in {0, 0.75, 1.5, 2.25} {
            \draw [fill=blue!55, line width=0.3mm] (\x,0) rectangle (\x+0.75, 0.75);

    }

         \foreach \x in {0.75} { 
            
            \draw [fill=red!55, line width=0.3mm] (\x+2.25,0.75) rectangle (\x+3, 1.5);
            
              \draw [fill=red!55, line width=0.3mm] (\x +2.25,1.5) rectangle (\x+3, 2.25);
            \draw [fill=red!55, line width=0.3mm] (\x+2.25,2.25) rectangle (\x+3, 3);
        }

    \draw [fill=red!55, line width=0.3mm] (3,0) rectangle (3.75, 0.75);

     \foreach \x in {0, 0.75, 1.5, 2.25} { 
     \foreach \y in {0, -0.75, -1.5, -2.25}{
      \draw [fill=orange!55, line width=0.3mm] (\x,\y+3) rectangle (\x+0.75,\y+3.75);
      }
}

\end{scope}

\end{scope}


\begin{scope}[shift={(14,0)}]


\begin{scope}[shift={(17.5,-4.25)}]
\foreach \x in {0, 0.75, 1.5, 2.25} {
            \draw [fill=blue!55, line width=0.3mm] (\x,0) rectangle (\x+0.75, 0.75);

   }
    
    \foreach \x in {0, 0.75, 1.5} {

        }
        
         \foreach \x in {0.75} { 
         
          \draw [fill=red!55, line width=0.3mm] (\x +2.25,1.5) rectangle (\x+3, 2.25);
            
             \draw [fill=red!55, line width=0.3mm] (\x+2.25,2.25) rectangle (\x+3, 3);
            \draw [fill=red!55, line width=0.3mm] (\x+2.25,0.75) rectangle (\x+3, 1.5);
        }
    
    
    \draw [fill=red!55, line width=0.3mm] (3,0) rectangle (3.75, 0.75);

    

\end{scope}

\begin{scope}[shift={(17.5,-10.75)}]
\foreach \x in {0, 0.75, 1.5, 2.25} {
            \draw [fill=blue!55, line width=0.3mm] (\x,0) rectangle (\x+0.75, 0.75);

    }
    
    \foreach \x in {0, 0.75, 1.5} {

        }
        
         \foreach \x in {0.75} { 
         
  \draw [fill=red!55, line width=0.3mm] (\x +2.25,1.5) rectangle (\x+3, 2.25);
            \draw [fill=red!55, line width=0.3mm] (\x+2.25,2.25) rectangle (\x+3, 3);         
         
            \draw [fill=red!55, line width=0.3mm] (\x+2.25,0.75) rectangle (\x+3, 1.5);
        }

    \draw [fill=red!55, line width=0.3mm] (3,0) rectangle (3.75, 0.75);

\end{scope}

\begin{scope}[shift={(17.5,-17.25)}]
\foreach \x in {0, 0.75, 1.5, 2.25} {
            \draw [fill=blue!55, line width=0.3mm] (\x,0) rectangle (\x+0.75, 0.75);

    }
    
    \foreach \x in {0, 0.75, 1.5} {

        }
        
         \foreach \x in {0.75} { 
            
            \draw [fill=red!55, line width=0.3mm] (\x+2.25,0.75) rectangle (\x+3, 1.5);
            
              \draw [fill=red!55, line width=0.3mm] (\x +2.25,1.5) rectangle (\x+3, 2.25);
            \draw [fill=red!55, line width=0.3mm] (\x+2.25,2.25) rectangle (\x+3, 3);
        }

    \draw [fill=red!55, line width=0.3mm] (3,0) rectangle (3.75, 0.75);
    
    

\end{scope}

\begin{scope}[shift={(17.5,-23.75)}]
\foreach \x in {0, 0.75, 1.5, 2.25} {
            \draw [fill=blue!55, line width=0.3mm] (\x,0) rectangle (\x+0.75, 0.75);

    }

         \foreach \x in {0.75} { 
            
            \draw [fill=red!55, line width=0.3mm] (\x+2.25,0.75) rectangle (\x+3, 1.5);
            
              \draw [fill=red!55, line width=0.3mm] (\x +2.25,1.5) rectangle (\x+3, 2.25);
            \draw [fill=red!55, line width=0.3mm] (\x+2.25,2.25) rectangle (\x+3, 3);
        }

    \draw [fill=red!55, line width=0.3mm] (3,0) rectangle (3.75, 0.75);

     \foreach \x in {0, 0.75, 1.5, 2.25} { 
     \foreach \y in {0, -0.75, -1.5, -2.25}{
      \draw [fill=orange!55, line width=0.3mm] (\x,\y+3) rectangle (\x+0.75,\y+3.75);
      }
}

\end{scope}

\end{scope}


\begin{scope}[shift={(21,0)}]


\begin{scope}[shift={(17.5,-4.25)}]
\foreach \x in {0, 0.75, 1.5, 2.25} {
            \draw [fill=blue!55, line width=0.3mm] (\x,0) rectangle (\x+0.75, 0.75);

   }
    
    \foreach \x in {0, 0.75, 1.5} { 

        }

         \draw [fill=red!55, line width=0.3mm] (3,1.5) rectangle (3.75, 2.25);
            
             \draw [fill=red!55, line width=0.3mm] (3,2.25) rectangle (3.75, 3);
            
            \draw [fill=red!55, line width=0.3mm] (3,0.75) rectangle (3.75, 1.5);
 
    \draw [fill=red!55, line width=0.3mm] (3,0) rectangle (3.75, 0.75);

    

\end{scope}

\begin{scope}[shift={(17.5,-10.75)}]
\foreach \x in {0, 0.75, 1.5, 2.25} {
            \draw [fill=blue!55, line width=0.3mm] (\x,0) rectangle (\x+0.75, 0.75);

    }

      \draw [fill=red!55, line width=0.3mm] (3,1.5) rectangle (3.75, 2.25);
            \draw [fill=red!55, line width=0.3mm] (3,2.25) rectangle (3.75, 3);

            \draw [fill=red!55, line width=0.3mm] (3,0.75) rectangle (3.75, 1.5);

    \draw [fill=red!55, line width=0.3mm] (3,0) rectangle (3.75, 0.75);

\end{scope}

\begin{scope}[shift={(17.5,-17.25)}]
\foreach \x in {0, 0.75, 1.5, 2.25} {
            \draw [fill=blue!55, line width=0.3mm] (\x,0) rectangle (\x+0.75, 0.75);

    }

       \draw [fill=red!55, line width=0.3mm] (3,0.75) rectangle (3.75, 1.5);
            
              \draw [fill=red!55, line width=0.3mm] (3,1.5) rectangle (3.75, 2.25);
            \draw [fill=red!55, line width=0.3mm] (3,2.25) rectangle (3.75, 3);

    \draw [fill=red!55, line width=0.3mm] (3,0) rectangle (3.75, 0.75);

\end{scope}

\begin{scope}[shift={(17.5,-23.75)}]
\foreach \x in {0, 0.75, 1.5, 2.25} {
            \draw [fill=blue!55, line width=0.3mm] (\x,0) rectangle (\x+0.75, 0.75);

    }
    
         \foreach \x in {0.75} { 
            
            \draw [fill=red!55, line width=0.3mm] (\x+2.25,0.75) rectangle (\x+3, 1.5);
            
              \draw [fill=red!55, line width=0.3mm] (\x +2.25,1.5) rectangle (\x+3, 2.25);
            \draw [fill=red!55, line width=0.3mm] (\x+2.25,2.25) rectangle (\x+3, 3);
        }

    \draw [fill=red!55, line width=0.3mm] (3,0) rectangle (3.75, 0.75);
    
     \foreach \x in {0, 0.75, 1.5, 2.25} { 
     \foreach \y in {0, -0.75, -1.5, -2.25}{
      \draw [fill=orange!55, line width=0.3mm] (\x,\y+3) rectangle (\x+0.75,\y+3.75);
      }
}


\begin{scope}[shift={(0,6.5)}]
 \foreach \x in {0, 0.75, 1.5, 2.25} { 
     \foreach \y in {0, -0.75, -1.5, -2.25}{
      \draw [fill=green!55, line width=0.3mm] (\x,\y+3) rectangle (\x+0.75,\y+3.75);
      }
}
\end{scope}

\begin{scope}[shift={(0,13)}]
 \foreach \x in {0, 0.75, 1.5, 2.25} { 
     \foreach \y in {0, -0.75, -1.5, -2.25}{
      \draw [fill=green!55, line width=0.3mm] (\x,\y+3) rectangle (\x+0.75,\y+3.75);
      }
}
\end{scope}

\begin{scope}[shift={(0,19.5)}]
 \foreach \x in {0, 0.75, 1.5, 2.25} { 
     \foreach \y in {0, -0.75, -1.5, -2.25}{
      \draw [fill=green!55, line width=0.3mm] (\x,\y+3) rectangle (\x+0.75,\y+3.75);
      }
}
\end{scope}

\begin{scope}[shift={(0,26)}]
 \foreach \x in {0, 0.75, 1.5, 2.25} { 
     \foreach \y in {0, -0.75, -1.5, -2.25}{
      \draw [fill=green!55, line width=0.3mm] (\x,\y+3) rectangle (\x+0.75,\y+3.75);
      }
}
\end{scope}

\end{scope}

\end{scope}

\end{tikzpicture}
\vskip .8cm

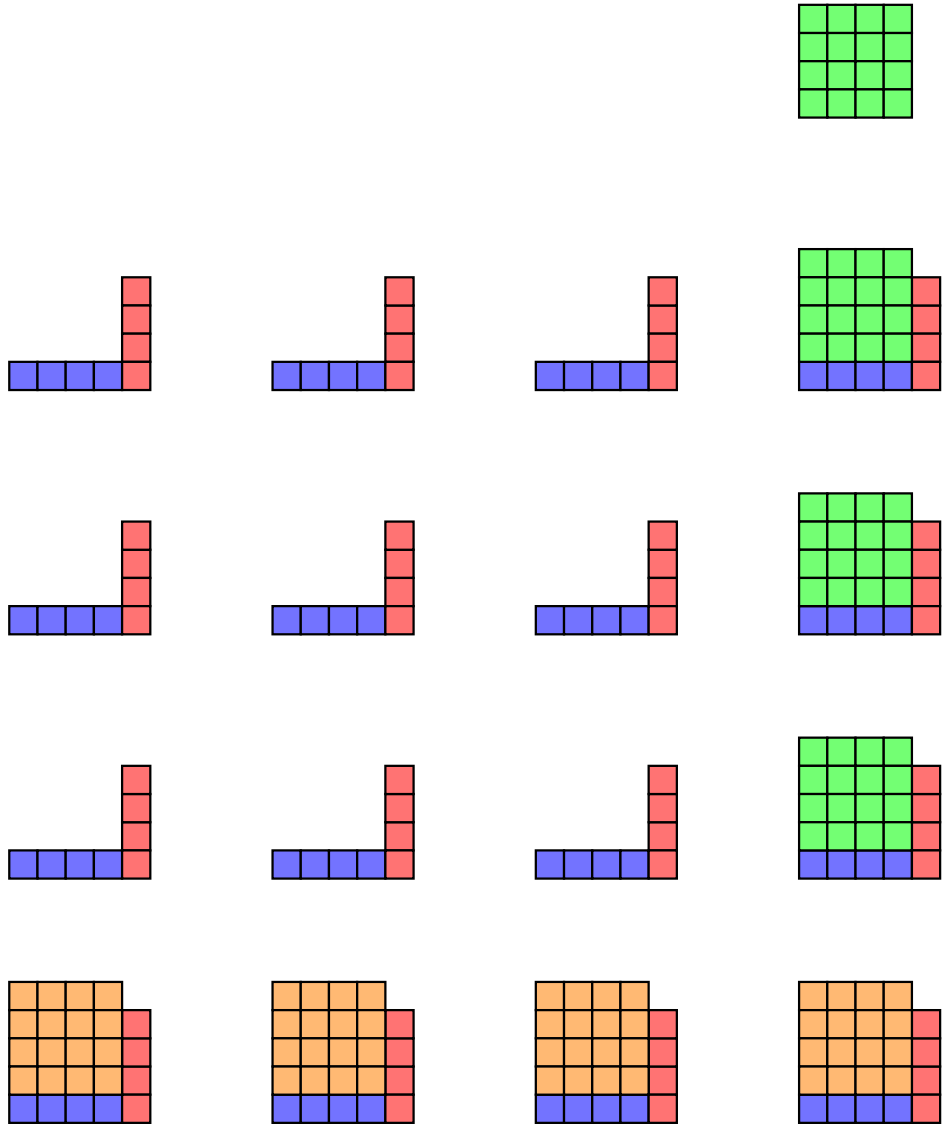
\captionof{figure}{$n$-th 4D section of four assembled 5D pyramids}
\label{n 4D Section}

\begin{remark}[Algebra vs. Visual Arguments II]
At this point it is natural to ask for an algebraic justification that the sections are indeed obtained in this way. In lower dimensions we have already checked that this procedure works geometrically, but as we move to higher dimensions our intuition becomes weaker, and we could ask for an algebraic argument on which to rely.

\restoreparindent For instance, in the case of two 3D pyramids we obtain

\begin {equation*}
\begin{aligned}
2(1^2 +2^2+ 3^2+ \cdots + n^2) &=& 2(1+2+3+\cdots +n )\\
&&+2(2+3+\cdots +n )\\
&&+2(3+\cdots +n )\\
&&\hfill \vdots \ \\
&&+2(\hfill n).
\end{aligned}
\end{equation*}

\restoreparindent Here the left-hand side counts the unit cubes in the two fitted 3D pyramids (the red and blue pyramids in Figure~\ref{2D of 3D}), and each line on the right-hand side corresponds to a 2D section. In each section, both triangular numbers lose one step, until in the last section we are left with two steps of length $n$.

\restoreparindent If instead of two 3D pyramids we assemble three of them, we obtain
\begin {equation*}
\begin{aligned}
3(1^2 +2^2+ 3^2+ \cdots + n^2) &=& 1^2+2^2 + 3^2 + \cdots + n^2\ \\
&&+2(1^2+2^2+3^2+\cdots +n^2 )\\
&=&[1^2 +2(1+2+3+\cdots +n )]\\
&&[2^2+2(2+3+\cdots +n )]\\
&&[3^2+2(3+\cdots +n )]\\
&&\hfill \vdots \ \\
&&[n^2+2(\hfill n)],
\end{aligned}
\end{equation*}
where each line on the right-hand side corresponds to one of the 2D sections shown on the right-hand side of Figure~\ref{2D of 3D}.

\restoreparindent The key Lemma that we will use in higher dimensions states that if we truncate a sum of powers with exponent $p+1$, the corresponding triangular sums with exponent $p$ are also truncated at the same point. More precisely, it says the following.

\begin{lemma}\label{Truncated Lemma}
For any $p\geq 0$, $n\geq 1$ and $1\leq m\leq n$ we have:
\begin {equation}\label{Truncated Algebraic formula}
\begin{aligned}
m^{p+1} +(m+1)^{p+1}+ \cdots + n^{p+1} &=& m^p+(m+1)^p+\cdots +n^p \\
&& \tikz \draw[dashed] (-3,0) -- (1,0); \\
&& m^p+(m+1)^p+\cdots +n^p \\
&&\ \ \ +(m+1)^p+\cdots +n^p \\
&&\hfill \vdots \ \\
&&\hfill +n^p
\end{aligned}
\end{equation}

\end{lemma}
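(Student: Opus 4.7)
The plan is to imitate the column-summation argument used in the proof of Lemma~\ref{Algebraic Lemma}, now applied to a rectangular array in which the ``missing'' top rows have been merged into the single row displayed above the dashed separator. Concretely, I would rewrite the right-hand side of (\ref{Truncated Algebraic formula}) as an array with $n$ rows, where for $i = 1, \ldots, m-1$ the $i$-th row is the constant sum $m^p + (m+1)^p + \cdots + n^p$ (these $m-1$ identical rows are encoded by the single line above the dashes), and for $i = m, m+1, \ldots, n$ the $i$-th row is the progressively truncated sum $i^p + (i+1)^p + \cdots + n^p$ shown below the dashes. Summing by rows clearly reproduces the right-hand side of (\ref{Truncated Algebraic formula}).

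Summing by columns is then the key step. For a fixed $j$ with $m \leq j \leq n$, the term $j^p$ appears in the $i$-th row precisely when $\max(i,m) \leq j$; since $j \geq m$ this condition reduces to $i \leq j$, and hence $j^p$ occurs in exactly $j$ rows. The column sum is therefore $j \cdot j^p = j^{p+1}$, and adding over $j = m, m+1, \ldots, n$ gives exactly $m^{p+1} + (m+1)^{p+1} + \cdots + n^{p+1}$, which is the left-hand side, completing the proof.

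The main subtlety, and the step I would verify most carefully, is the bookkeeping of the dashed-line convention: one must confirm that the single row displayed above the separator really stands for $m-1$ identical copies of $m^p + (m+1)^p + \cdots + n^p$, since this is precisely what makes each column-count equal to $j$ rather than $j-m+2$. Once that convention is made explicit, the argument collapses to the same row-versus-column trick used for Lemma~\ref{Algebraic Lemma}, and no further combinatorial obstacle arises; in fact, the case $m=1$ recovers Lemma~\ref{Algebraic Lemma} exactly, which provides a useful sanity check.
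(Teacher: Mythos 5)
Your proof is correct, but it follows a different route from the paper's. The paper proves the truncated identity by \emph{reduction}: it writes $m^{p+1}+(m+1)^{p+1}+\cdots+n^{p+1}$ as the difference $(1^{p+1}+\cdots+n^{p+1})-(1^{p+1}+\cdots+(m-1)^{p+1})$, applies Lemma~\ref{Algebraic Lemma} to each of the two full sums, and cancels the common initial segments of the triangular rows; the $(m-1)$ leftover fragments $m^p+\cdots+n^p$ are exactly what the single row above the dashed separator encodes. You instead reprove the statement \emph{directly} by redoing the row-versus-column double count on the truncated array itself, verifying that each $j^p$ with $m\le j\le n$ occurs in precisely $j$ rows. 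Both arguments are sound and rest on the same convention --- that the line above the dashes stands for $m-1$ identical copies of $m^p+\cdots+n^p$ --- which you rightly flag as the one point needing care, and which the paper's subtraction argument makes explicit as a byproduct. The paper's route is shorter given that Lemma~\ref{Algebraic Lemma} is already available; yours is self-contained, makes the row-multiplicity bookkeeping transparent, and degenerates to Lemma~\ref{Algebraic Lemma} at $m=1$, so either would serve.
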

\begin{proof}
    The proof consists in writing $m^{p+1}+(m+1)^{p+1}+\cdots +n^{p+1}$ as the difference 
    $$ (1^{p+1}+2^{p+1}+\cdots +n^{p+1}) - (1^{p+1}+2^{p+1}+\cdots +(m-1)^{p+1})$$
and then applying Lemma~\ref{Algebraic Lemma} to express both sums in terms of triangular sums. The equal terms cancel out, and the statement follows.
\end{proof}

In Chapter~\ref{Nicomachus 4D in 3D}, in order to prove the 4D version of Nicomachus’ theorem through 3D sections, we took three 4-dimensional pyramids and obtained
\begin {equation*}
\begin{aligned}
3(1^3 +2^3+ 3^3+ \cdots + n^3) &=& 3(1^2+2^2+3^2+\cdots +n^2 )\\
&&+3(2^2+3^2+\cdots +n^2 )\\
&&+3(3^2+\cdots +n^2 )\\
&&\hfill \vdots \ \\
&&+3(\hfill n^2),
\end{aligned}
\end{equation*}
where each line on the right-hand side corresponds to one of the 
$n$ three-dimensional sections of the three 4D pyramids, as shown in Figure~\ref{3 pyramids 4D}. The first section coincides with the three assembled 3D pyramids, and in each subsequent section these 3D pyramids lose one floor.

We can now apply Lemma~\ref{Truncated Lemma} to the truncated sums of squares and rewrite this identity as

$$3(1^3 +2^3+ 3^3+ \cdots + n^3)=$$
\begin {equation*}
\begin{aligned}
&=& \begin{pmatrix}
[1^2 +2(1+2+3+\cdots + n)] \\
+[2^2 +2(2+3+\cdots + n)] \\
+[3^2 +2(3+\cdots + n)] \\
\vdots  \\
+[n^2 + 2(n)]
\end{pmatrix} + \begin{pmatrix}
[2(2+3+\cdots + n)]\\
+[2^2 +2(2+3+\cdots + n)] \\
+[3^2 +2(3+\cdots + n)] \\
\vdots  \\
+[n^2 + 2(n)]
\end{pmatrix}\\
&&+\begin{pmatrix}
[2(3+\cdots + n)] \\
+[2(3+\cdots + n)] \\
+[3^2 +2(3+\cdots + n)]\\
\vdots  \\
+[n^2 + 2(n)]
\end{pmatrix}+\cdots +\begin{pmatrix}
[2(n)]\\
+[2(n)]\\
+[2(n)]\\
\vdots\\
+[n^2 + 2(n)]
\end{pmatrix}\\
\end{aligned}
\end{equation*}

where each pair of parentheses represents one column in Figure~\ref{2D section Nicomachus 4D}. We observe that, in the second section, the terms $1^2$ and $1$  are missing; in the next section, the terms $2^2$ and $2$ also disappear; and in the last section only the terms $n^2$ and $n$ remain.

Figure~\ref{2D section Nicomachus 4D Bis}, corresponding to the 2D sections of four assembled 4D pyramids, is represented by the following algebraic manipulation

\begin {equation*}
\begin{aligned}
4(1^3 +2^3+ 3^3+ \cdots + n^3) &=& (1^3 + 2^3 + 3^3 + \cdots + n^3)\\
&&+3(1^3 + 2^3 + 3^3 + \cdots + n^3)\\
&=& 1^2+2^2+3^2+\cdots +n^2 \\
&&\ \ +2^2+3^2+\cdots +n^2 \\
&&\ \ \ \ +3^2+\cdots +n^2 \\
&&\hfill \vdots \ \\
&&+\hfill n^2
\end{aligned}
\end{equation*}
\begin {equation*}
\begin{aligned}
&+& \begin{pmatrix}
[1^2 +2(1+2+3+\cdots + n)] \\
+[2^2 +2(2+3+\cdots + n)] \\
+[3^2 +2(3+\cdots + n)] \\
\vdots  \\
+[n^2 + 2(n)]
\end{pmatrix} + \begin{pmatrix}
[2(2+3+\cdots + n)]\\
+[2^2 +2(2+3+\cdots + n)] \\
+[3^2 +2(3+\cdots + n)] \\
\vdots  \\
+[n^2 + 2(n)]
\end{pmatrix}\\
&&+\begin{pmatrix}
[2(3+\cdots + n)] \\
+[2(3+\cdots + n)] \\
+[3^2 +2(3+\cdots + n)]\\
\vdots  \\
+[n^2 + 2(n)]
\end{pmatrix}+\cdots +\begin{pmatrix}
[2(n)]\\
+[2(n)]\\
+[2(n)]\\
\vdots\\
+[n^2 + 2(n)]
\end{pmatrix}\\
&=& \begin{pmatrix}
\textcolor{darkgreen}{1^2} \\
+[\textcolor{orange}{1^2} +2(1+2+3+\cdots + n)] \\
+[\textcolor{orange}{2^2} +2(2+3+\cdots + n)] \\
+[\textcolor{orange}{3^2} +2(3+\cdots + n)] \\
\vdots  \\
+[\textcolor{orange}{n^2} + 2(n)]
\end{pmatrix} + \begin{pmatrix}
\textcolor{darkgreen}{2^2} \\
+[\textcolor{darkgreen}{2^2} + 2(2+3+\cdots + n)]\\
+[\textcolor{orange}{2^2} +2(2+3+\cdots + n)] \\
+[\textcolor{orange}{3^2} +2(3+\cdots + n)] \\
\vdots  \\
+[\textcolor{orange}{n^2} + 2(n)]
\end{pmatrix}\\
&&+\begin{pmatrix}
\textcolor{darkgreen}{3^2} \\
+[\textcolor{darkgreen}{3^2}+2(3+\cdots + n)] \\
+[\textcolor{darkgreen}{3^2}+2(3+\cdots + n)] \\
+[\textcolor{orange}{3^2} +2(3+\cdots + n)]\\
\vdots  \\
+[\textcolor{orange}{n^2} + 2(n)]
\end{pmatrix}+\cdots +\begin{pmatrix}
\textcolor{darkgreen}{n^2}\\
[\textcolor{darkgreen}{n^2}+2(n)]\\
+[\textcolor{darkgreen}{n^2}+2(n)]\\
+[\textcolor{darkgreen}{n^2}+2(n)]\\
\vdots\\
+[\textcolor{orange}{n^2} + 2(n)]
\end{pmatrix}\\
\end{aligned}
\end{equation*}

In the last equation we have color-coded the unit squares so that they match the corresponding ones in Figure~\ref{2D section Nicomachus 4D Bis}. This is meant to help the reader interpret this algebraic manipulation in parallel with the manipulation of figurate numbers. However, giving a direct algebraic description of the step from Figure~\ref{2D section Nicomachus 4D Bis} to Figure~\ref{2D section Nicomachus 4D Bis Bis} is rather hard to formulate.

We can agree that the algebra needed to encode the visual arguments for assembling pyramids and their sections is rather cumbersome and tedious, as it strongly depends on how the different terms are grouped in parentheses. This kind of algebraic manipulation is non-standard and not especially pleasant. By contrast, the pictures that give rise to these calculations are simple and transparent. This is a situation in which visual arguments seem more appropriate, and in some sense more powerful, than the algebra itself. In any case, the purpose of this remark is to justify the method of using a diagram in one dimension less to construct the sections in the next dimension by successively removing squares and steps. The 5-dimensional case is analogous and can be checked in the same way, but we prefer not to write it out in full for aesthetic reasons.
\end{remark}

\vskip .5cm
\subsection{Step 1: Adding the Fifth 5D Pyramid}
To represent the fifth 5D pyramid that we want to assemble, we will use the identity
 $$1^4+2^4+3^4+\cdots +n^4 = 1^2 \cdot 1^2+2^2 \cdot 2^2 +3^2 \cdot 3^2 +\cdots + n^2 \cdot n^2$$
as shown in Figure~\ref{fifth 5D pyramid}.\par
\vspace{5mm}
\noindent
\begin{tikzpicture}[scale=0.47]

\draw [fill=pink!40, line width=0.3mm] (0,3) rectangle (0.75,3.75);

\foreach \p in {0, 2} {
\foreach \q in {0, 2} {
\begin{scope}[shift={(\p, \q-4)}]
 \foreach \x in {0, 0.75} { 
     \foreach \y in {0, -0.75}{
      \draw [fill=pink!40, line width=0.3mm] (\x,\y+3) rectangle (\x+0.75,\y+3.75);
      }
}
\end{scope}

}
}

\foreach \p in {0, 3.5, 7} {
\foreach \q in {0, 3.5, 7} {
\begin{scope}[shift={(\p, \q-14)}]
 \foreach \x in {0, 0.75, 1.5} { 
     \foreach \y in {0, -0.75, -1.5}{
      \draw [fill=pink!40, line width=0.3mm] (\x,\y+3) rectangle (\x+0.75,\y+3.75);
      }
}
\end{scope}

}
}

\foreach \p in {0, 5, 10, 15} {
\foreach \q in {0, 5, 10, 15} {
\begin{scope}[shift={(\p, \q-33)}]
 \foreach \x in {0, 0.75, 1.5, 2.25} { 
     \foreach \y in {0, -0.75, -1.5, -2.25}{
      \draw [fill=pink!40, line width=0.3mm] (\x,\y+3) rectangle (\x+0.75,\y+3.75);
      }
}
\end{scope}

}
}

\node at (18,0) {\huge $1^4 +2^4+3^4+\cdots +n^4$};


\begin{scope}[shift={(13, -18)}] 
   
\draw[<->] (2,0.4) -- (5,0.4) node[midway, below, yshift=0.5mm] {$n$};
\draw[<->] (5.4,1-0.25) -- (5.4,4-0.25) node[midway, right, xshift=-0.5mm] {$n$};

\end{scope}

\begin{scope}[shift={(-15, -8)}] 

\draw[<->] (15,-25.3) -- (33,-25.3) node[midway, below, yshift=-2mm] {\Huge $n$};

\draw[<->] (34.2,-24.3) -- (34.2,-6.3) node[midway, right, xshift=1.7mm] {\Huge $n$};


\draw [dashed, ultra thick] (14.5,-5.75) rectangle (28.5,-19.75);
\draw [dashed, ultra thick] (14.5, 5.25) rectangle (21.25,-1.5);
\draw [dashed, ultra thick] (14.75,10) rectangle (16.75, 8) ;

\end{scope}
\end{tikzpicture}

\vskip .8cm
\captionof{figure}{2D sections of the fifth 5D pyramid}
\label{fifth 5D pyramid}

\restoreparindent Next, we apply the same visual filling argument as in Figure~\ref{2D section Nicomachus 4D Bis Bis} to the diagrams in Figures~\ref{First 4D Section}–\ref{n 4D Section}. After adding the green unit squares in each of the 
$n$ sections, some square gaps appear, which are then filled with the white unit squares indicated with a dashed outline in Figure~\ref{fifth 5D pyramid}. In this way we obtain Figures~\ref{first with 5D}–\ref{n with 5D}, where all the puzzles have become perfect rectangular blocks.\par
\vspace{10mm}
\noindent
\begin{tikzpicture}[scale=0.45]


\begin{scope}[shift={(0,0)}]

\begin{scope}[shift={(17.5,-4.25)}]
\foreach \x in {0, 0.75, 1.5, 2.25} {
            \draw [fill=blue!55, line width=0.3mm] (\x,0) rectangle (\x+0.75, 0.75);
             \draw [fill=green!55, line width=0.3mm] (\x+0.75,3) rectangle (\x+1.5, 3.75);
            
            \draw [fill=red!55, line width=0.3mm] (\x+0.75,2.25) rectangle (\x+1.5, 3);
       
    }
    
    \foreach \x in {0, 0.75, 1.5} { 
            \draw [fill=blue!55, line width=0.3mm] (\x,0.75) rectangle (\x+0.75, 1.5);
            
            \draw [fill=red!55, line width=0.3mm] (\x +1.5,1.5) rectangle (\x+2.25, 2.25);
        }
        
         \foreach \x in {0, 0.75} { 
            \draw [fill=blue!55, line width=0.3mm] (\x,1.5) rectangle (\x+0.75, 2.25);
            \draw [fill=red!55, line width=0.3mm] (\x+2.25,0.75) rectangle (\x+3, 1.5);
        }
    
    \draw [fill=blue!55, line width=0.3mm] (0,2.25) rectangle (0.75, 3);
    
    \draw [fill=red!55, line width=0.3mm] (3,0) rectangle (3.75, 0.75);

    
      \draw [fill=orange!55, line width=0.3mm] (0,3) rectangle (0.75, 3.75);

\end{scope}

\begin{scope}[shift={(17.5,-10.75)}]
\foreach \x in {0, 0.75, 1.5, 2.25} {
            \draw [fill=blue!55, line width=0.3mm] (\x,0) rectangle (\x+0.75, 0.75);

    }
    
    \foreach \x in {0, 0.75, 1.5} { 
            \draw [fill=blue!55, line width=0.3mm] (\x,0.75) rectangle (\x+0.75, 1.5);
            \draw [fill=green!55, line width=0.3mm] (\x+1.5,3) rectangle (\x+2.25, 3.75);
            
            \draw [fill=red!55, line width=0.3mm] (\x +1.5,1.5) rectangle (\x+2.25, 2.25);
            \draw [fill=red!55, line width=0.3mm] (\x+1.5,2.25) rectangle (\x+2.25, 3);
        }
        
         \foreach \x in {0, 0.75} { 
            \draw [fill=blue!55, line width=0.3mm] (\x,1.5) rectangle (\x+0.75, 2.25);
            \draw [fill=red!55, line width=0.3mm] (\x+2.25,0.75) rectangle (\x+3, 1.5);
        }

    \draw [fill=red!55, line width=0.3mm] (3,0) rectangle (3.75, 0.75);

     \foreach \x in {0, 0.75} { 
     \foreach \y in {0, -0.75}{
      \draw [fill=orange!55, line width=0.3mm] (\x,\y+3) rectangle (\x+0.75,\y+3.75);
      }
}

\end{scope}

\begin{scope}[shift={(17.5,-17.25)}]
\foreach \x in {0, 0.75, 1.5, 2.25} {
            \draw [fill=blue!55, line width=0.3mm] (\x,0) rectangle (\x+0.75, 0.75);

    }
    
    \foreach \x in {0, 0.75, 1.5} { 
            \draw [fill=blue!55, line width=0.3mm] (\x,0.75) rectangle (\x+0.75, 1.5);

        }
        
         \foreach \x in {0, 0.75} { 
             \draw [fill=green!55, line width=0.3mm] (\x+2.25,3) rectangle (\x+3, 3.75);
            \draw [fill=red!55, line width=0.3mm] (\x+2.25,0.75) rectangle (\x+3, 1.5);
            
              \draw [fill=red!55, line width=0.3mm] (\x +2.25,1.5) rectangle (\x+3, 2.25);
            \draw [fill=red!55, line width=0.3mm] (\x+2.25,2.25) rectangle (\x+3, 3);
        }

    \draw [fill=red!55, line width=0.3mm] (3,0) rectangle (3.75, 0.75);

     \foreach \x in {0, 0.75, 1.5} { 
     \foreach \y in {0, -0.75, -1.5}{
      \draw [fill=orange!55, line width=0.3mm] (\x,\y+3) rectangle (\x+0.75,\y+3.75);
      }
}

\end{scope}

\begin{scope}[shift={(17.5,-23.75)}]
\foreach \x in {0, 0.75, 1.5, 2.25} {
            \draw [fill=blue!55, line width=0.3mm] (\x,0) rectangle (\x+0.75, 0.75);

    }

         \foreach \x in {0.75} { 
            
              \draw [fill=green!55, line width=0.3mm] (\x+2.25,3) rectangle (\x+3, 3.75);
            \draw [fill=red!55, line width=0.3mm] (\x+2.25,0.75) rectangle (\x+3, 1.5);
            
              \draw [fill=red!55, line width=0.3mm] (\x +2.25,1.5) rectangle (\x+3, 2.25);
            \draw [fill=red!55, line width=0.3mm] (\x+2.25,2.25) rectangle (\x+3, 3);
        }

    \draw [fill=red!55, line width=0.3mm] (3,0) rectangle (3.75, 0.75);

     \foreach \x in {0, 0.75, 1.5, 2.25} { 
     \foreach \y in {0, -0.75, -1.5, -2.25}{
      \draw [fill=orange!55, line width=0.3mm] (\x,\y+3) rectangle (\x+0.75,\y+3.75);
      }
}

\end{scope}

\end{scope}


\begin{scope}[shift={(7,0)}]

\begin{scope}[shift={(17.5,-4.25)}]
\foreach \x in {0, 0.75, 1.5, 2.25} {
            \draw [fill=blue!55, line width=0.3mm] (\x,0) rectangle (\x+0.75, 0.75);

    }
    
    \foreach \x in {0, 0.75, 1.5} { 
  \draw [fill=green!55, line width=0.3mm] (\x+1.5,3) rectangle (\x+2.25, 3.75);    
    
            \draw [fill=blue!55, line width=0.3mm] (\x,0.75) rectangle (\x+0.75, 1.5);
            
            \draw [fill=red!55, line width=0.3mm] (\x +1.5,1.5) rectangle (\x+2.25, 2.25);
            
             \draw [fill=red!55, line width=0.3mm] (\x+1.5,2.25) rectangle (\x+2.25, 3);
        }
        
         \foreach \x in {0, 0.75} { 
            \draw [fill=blue!55, line width=0.3mm] (\x,1.5) rectangle (\x+0.75, 2.25);
            \draw [fill=red!55, line width=0.3mm] (\x+2.25,0.75) rectangle (\x+3, 1.5);
        }
    
    
    \draw [fill=red!55, line width=0.3mm] (3,0) rectangle (3.75, 0.75);

    

\end{scope}

\begin{scope}[shift={(17.5,-10.75)}]
\foreach \x in {0, 0.75, 1.5, 2.25} {
            \draw [fill=blue!55, line width=0.3mm] (\x,0) rectangle (\x+0.75, 0.75);

    }
    
    \foreach \x in {0, 0.75, 1.5} { 
    
    \draw [fill=green!55, line width=0.3mm] (\x+1.5,3) rectangle (\x+2.25, 3.75);
            \draw [fill=blue!55, line width=0.3mm] (\x,0.75) rectangle (\x+0.75, 1.5);
            
            \draw [fill=red!55, line width=0.3mm] (\x +1.5,1.5) rectangle (\x+2.25, 2.25);
            \draw [fill=red!55, line width=0.3mm] (\x+1.5,2.25) rectangle (\x+2.25, 3);
        }
        
         \foreach \x in {0, 0.75} { 
            \draw [fill=blue!55, line width=0.3mm] (\x,1.5) rectangle (\x+0.75, 2.25);
            \draw [fill=red!55, line width=0.3mm] (\x+2.25,0.75) rectangle (\x+3, 1.5);
        }

    \draw [fill=red!55, line width=0.3mm] (3,0) rectangle (3.75, 0.75);

     \foreach \x in {0, 0.75} { 
     \foreach \y in {0, -0.75}{
      \draw [fill=orange!55, line width=0.3mm] (\x,\y+3) rectangle (\x+0.75,\y+3.75);
      }
}

\begin{scope}[shift={(0,6.5)}]
 \foreach \x in {0, 0.75} { 
     \foreach \y in {0, -0.75}{
      \draw [fill=green!55, line width=0.3mm] (\x,\y+3) rectangle (\x+0.75,\y+3.75);
      }
}
\end{scope}

\end{scope}

\begin{scope}[shift={(17.5,-17.25)}]
\foreach \x in {0, 0.75, 1.5, 2.25} {
            \draw [fill=blue!55, line width=0.3mm] (\x,0) rectangle (\x+0.75, 0.75);

    }
    
    \foreach \x in {0, 0.75, 1.5} { 
            \draw [fill=blue!55, line width=0.3mm] (\x,0.75) rectangle (\x+0.75, 1.5);

        }
        
         \foreach \x in {0, 0.75} { 
            \draw [fill=green!55, line width=0.3mm] (\x+2.25,3) rectangle (\x+3, 3.75);
            
            \draw [fill=red!55, line width=0.3mm] (\x+2.25,0.75) rectangle (\x+3, 1.5);
            
              \draw [fill=red!55, line width=0.3mm] (\x +2.25,1.5) rectangle (\x+3, 2.25);
            \draw [fill=red!55, line width=0.3mm] (\x+2.25,2.25) rectangle (\x+3, 3);
        }

    \draw [fill=red!55, line width=0.3mm] (3,0) rectangle (3.75, 0.75);

     \foreach \x in {0, 0.75, 1.5} { 
     \foreach \y in {0, -0.75, -1.5}{
      \draw [fill=orange!55, line width=0.3mm] (\x,\y+3) rectangle (\x+0.75,\y+3.75);
      }
}

\end{scope}

\begin{scope}[shift={(17.5,-23.75)}]
\foreach \x in {0, 0.75, 1.5, 2.25} {
            \draw [fill=blue!55, line width=0.3mm] (\x,0) rectangle (\x+0.75, 0.75);

    }

         \foreach \x in {0.75} { 
            
\draw [fill=green!55, line width=0.3mm] (\x+2.25,3) rectangle (\x+3, 3.75);            
            
            \draw [fill=red!55, line width=0.3mm] (\x+2.25,0.75) rectangle (\x+3, 1.5);
            
              \draw [fill=red!55, line width=0.3mm] (\x +2.25,1.5) rectangle (\x+3, 2.25);
            \draw [fill=red!55, line width=0.3mm] (\x+2.25,2.25) rectangle (\x+3, 3);
        }

    \draw [fill=red!55, line width=0.3mm] (3,0) rectangle (3.75, 0.75);

     \foreach \x in {0, 0.75, 1.5, 2.25} { 
     \foreach \y in {0, -0.75, -1.5, -2.25}{
      \draw [fill=orange!55, line width=0.3mm] (\x,\y+3) rectangle (\x+0.75,\y+3.75);
      }
}

\end{scope}

\end{scope}


\begin{scope}[shift={(14,0)}]


\begin{scope}[shift={(17.5,-4.25)}]
\foreach \x in {0, 0.75, 1.5, 2.25} {
            \draw [fill=blue!55, line width=0.3mm] (\x,0) rectangle (\x+0.75, 0.75);

   }
    
    \foreach \x in {0, 0.75, 1.5} { 
            \draw [fill=blue!55, line width=0.3mm] (\x,0.75) rectangle (\x+0.75, 1.5);

        }
        
         \foreach \x in {0, 0.75} { 
         
 \draw [fill=green!55, line width=0.3mm] (\x +2.25,3) rectangle (\x+3, 3.75);         
         
          \draw [fill=red!55, line width=0.3mm] (\x +2.25,1.5) rectangle (\x+3, 2.25);
            
             \draw [fill=red!55, line width=0.3mm] (\x+2.25,2.25) rectangle (\x+3, 3);
            \draw [fill=red!55, line width=0.3mm] (\x+2.25,0.75) rectangle (\x+3, 1.5);
        }
    
    
    \draw [fill=red!55, line width=0.3mm] (3,0) rectangle (3.75, 0.75);

    

\end{scope}

\begin{scope}[shift={(17.5,-10.75)}]
\foreach \x in {0, 0.75, 1.5, 2.25} {
            \draw [fill=blue!55, line width=0.3mm] (\x,0) rectangle (\x+0.75, 0.75);

    }
    
    \foreach \x in {0, 0.75, 1.5} { 
            \draw [fill=blue!55, line width=0.3mm] (\x,0.75) rectangle (\x+0.75, 1.5);

        }
        
         \foreach \x in {0, 0.75} { 
         \draw [fill=green!55, line width=0.3mm] (\x +2.25,3) rectangle (\x+3, 3.75);
         
  \draw [fill=red!55, line width=0.3mm] (\x +2.25,1.5) rectangle (\x+3, 2.25);
            \draw [fill=red!55, line width=0.3mm] (\x+2.25,2.25) rectangle (\x+3, 3);         
         
            \draw [fill=red!55, line width=0.3mm] (\x+2.25,0.75) rectangle (\x+3, 1.5);
        }

    \draw [fill=red!55, line width=0.3mm] (3,0) rectangle (3.75, 0.75);

\end{scope}

\begin{scope}[shift={(17.5,-17.25)}]
\foreach \x in {0, 0.75, 1.5, 2.25} {
            \draw [fill=blue!55, line width=0.3mm] (\x,0) rectangle (\x+0.75, 0.75);

    }
    
    \foreach \x in {0, 0.75, 1.5} { 
            \draw [fill=blue!55, line width=0.3mm] (\x,0.75) rectangle (\x+0.75, 1.5);

        }
        
         \foreach \x in {0, 0.75} { 
            
\draw [fill=green!55, line width=0.3mm] (\x+2.25,3) rectangle (\x+3, 3.75);            
            
            \draw [fill=red!55, line width=0.3mm] (\x+2.25,0.75) rectangle (\x+3, 1.5);
            
              \draw [fill=red!55, line width=0.3mm] (\x +2.25,1.5) rectangle (\x+3, 2.25);
            \draw [fill=red!55, line width=0.3mm] (\x+2.25,2.25) rectangle (\x+3, 3);
        }

    \draw [fill=red!55, line width=0.3mm] (3,0) rectangle (3.75, 0.75);

     \foreach \x in {0, 0.75, 1.5} { 
     \foreach \y in {0, -0.75, -1.5}{
      \draw [fill=orange!55, line width=0.3mm] (\x,\y+3) rectangle (\x+0.75,\y+3.75);
      }
}

\begin{scope}[shift={(0,6.5)}]
\foreach \x in {0, 0.75, 1.5} { 
     \foreach \y in {0, -0.75, -1.5}{
      \draw [fill=green!55, line width=0.3mm] (\x,\y+3) rectangle (\x+0.75,\y+3.75);
      }
}
\end{scope}

\begin{scope}[shift={(0,13)}]
\foreach \x in {0, 0.75, 1.5} { 
     \foreach \y in {0, -0.75, -1.5}{
      \draw [fill=green!55, line width=0.3mm] (\x,\y+3) rectangle (\x+0.75,\y+3.75);
      }
}
\end{scope}

\end{scope}

\begin{scope}[shift={(17.5,-23.75)}]
\foreach \x in {0, 0.75, 1.5, 2.25} {
            \draw [fill=blue!55, line width=0.3mm] (\x,0) rectangle (\x+0.75, 0.75);

    }

         \foreach \x in {0.75} { 
             \draw [fill=green!55, line width=0.3mm] (\x+2.25,3) rectangle (\x+3, 3.75); 
             
            \draw [fill=red!55, line width=0.3mm] (\x+2.25,0.75) rectangle (\x+3, 1.5);
            
              \draw [fill=red!55, line width=0.3mm] (\x +2.25,1.5) rectangle (\x+3, 2.25);
            \draw [fill=red!55, line width=0.3mm] (\x+2.25,2.25) rectangle (\x+3, 3);
        }

    \draw [fill=red!55, line width=0.3mm] (3,0) rectangle (3.75, 0.75);

     \foreach \x in {0, 0.75, 1.5, 2.25} { 
     \foreach \y in {0, -0.75, -1.5, -2.25}{
      \draw [fill=orange!55, line width=0.3mm] (\x,\y+3) rectangle (\x+0.75,\y+3.75);
      }
}

\end{scope}

\end{scope}


\begin{scope}[shift={(21,0)}]


\begin{scope}[shift={(17.5,-4.25)}]
\foreach \x in {0, 0.75, 1.5, 2.25} {
            \draw [fill=blue!55, line width=0.3mm] (\x,0) rectangle (\x+0.75, 0.75);

   }
    
    \foreach \x in {0, 0.75, 1.5} { 

        }

\draw [fill=green!55, line width=0.3mm] (3,3) rectangle (3.75, 3.75);        
        
         \draw [fill=red!55, line width=0.3mm] (3,1.5) rectangle (3.75, 2.25);
            
             \draw [fill=red!55, line width=0.3mm] (3,2.25) rectangle (3.75, 3);
            
            \draw [fill=red!55, line width=0.3mm] (3,0.75) rectangle (3.75, 1.5);
 
    \draw [fill=red!55, line width=0.3mm] (3,0) rectangle (3.75, 0.75);

    

\end{scope}

\begin{scope}[shift={(17.5,-10.75)}]
\foreach \x in {0, 0.75, 1.5, 2.25} {
            \draw [fill=blue!55, line width=0.3mm] (\x,0) rectangle (\x+0.75, 0.75);

    }
    
   \draw [fill=green!55, line width=0.3mm] (3,3) rectangle (3.75, 3.75);
        
      \draw [fill=red!55, line width=0.3mm] (3,1.5) rectangle (3.75, 2.25);
            \draw [fill=red!55, line width=0.3mm] (3,2.25) rectangle (3.75, 3);

            \draw [fill=red!55, line width=0.3mm] (3,0.75) rectangle (3.75, 1.5);

    \draw [fill=red!55, line width=0.3mm] (3,0) rectangle (3.75, 0.75);

\end{scope}

\begin{scope}[shift={(17.5,-17.25)}]
\foreach \x in {0, 0.75, 1.5, 2.25} {
            \draw [fill=blue!55, line width=0.3mm] (\x,0) rectangle (\x+0.75, 0.75);

    }
    
    \draw [fill=green!55, line width=0.3mm] (3,3) rectangle (3.75, 3.75);
  
       \draw [fill=red!55, line width=0.3mm] (3,0.75) rectangle (3.75, 1.5);
            
              \draw [fill=red!55, line width=0.3mm] (3,1.5) rectangle (3.75, 2.25);
            \draw [fill=red!55, line width=0.3mm] (3,2.25) rectangle (3.75, 3);

    \draw [fill=red!55, line width=0.3mm] (3,0) rectangle (3.75, 0.75);

\end{scope}

\begin{scope}[shift={(17.5,-23.75)}]
\foreach \x in {0, 0.75, 1.5, 2.25} {
            \draw [fill=blue!55, line width=0.3mm] (\x,0) rectangle (\x+0.75, 0.75);

    }

         \foreach \x in {0.75} { 
            
\draw [fill=green!55, line width=0.3mm] (\x+2.25,3) rectangle (\x+3, 3.75);            
            
            \draw [fill=red!55, line width=0.3mm] (\x+2.25,0.75) rectangle (\x+3, 1.5);
            
              \draw [fill=red!55, line width=0.3mm] (\x +2.25,1.5) rectangle (\x+3, 2.25);
            \draw [fill=red!55, line width=0.3mm] (\x+2.25,2.25) rectangle (\x+3, 3);
        }

    \draw [fill=red!55, line width=0.3mm] (3,0) rectangle (3.75, 0.75);

     \foreach \x in {0, 0.75, 1.5, 2.25} { 
     \foreach \y in {0, -0.75, -1.5, -2.25}{
      \draw [fill=orange!55, line width=0.3mm] (\x,\y+3) rectangle (\x+0.75,\y+3.75);
      }
}


\begin{scope}[shift={(0,6.5)}]
 \foreach \x in {0, 0.75, 1.5, 2.25} { 
     \foreach \y in {0, -0.75, -1.5, -2.25}{
      \draw [fill=green!55, line width=0.3mm] (\x,\y+3) rectangle (\x+0.75,\y+3.75);
      }
}
\end{scope}

\begin{scope}[shift={(0,13)}]
 \foreach \x in {0, 0.75, 1.5, 2.25} { 
     \foreach \y in {0, -0.75, -1.5, -2.25}{
      \draw [fill=green!55, line width=0.3mm] (\x,\y+3) rectangle (\x+0.75,\y+3.75);
      }
}
\end{scope}

\begin{scope}[shift={(0,19.5)}]
 \foreach \x in {0, 0.75, 1.5, 2.25} { 
     \foreach \y in {0, -0.75, -1.5, -2.25}{
      \draw [fill=green!55, line width=0.3mm] (\x,\y+3) rectangle (\x+0.75,\y+3.75);
      }
}
\end{scope}

\end{scope}

\end{scope}


 \foreach \x in {0} { 
  \foreach \y in {0} { 
\begin{scope}[shift={(17.5+\x, -4.5-\y)}] 
   
\draw[<->] (0,0) -- (3.75,0) node[midway, below, yshift=0.5mm] {$n+1$};
\draw[<->] (4,0.2) -- (4,3.95) node[midway, right, xshift=-0.5mm] {$n+1$};

\end{scope}

}
}

\draw[<->] (17,-25) -- (42.5,-25) node[midway, below, yshift=-2mm] {\huge $n$};

\draw[<->] (43.7,-24) -- (43.7,-0.5) node[midway, right, xshift=1.7mm] {\huge $n$};

\end{tikzpicture}
\vskip .8cm

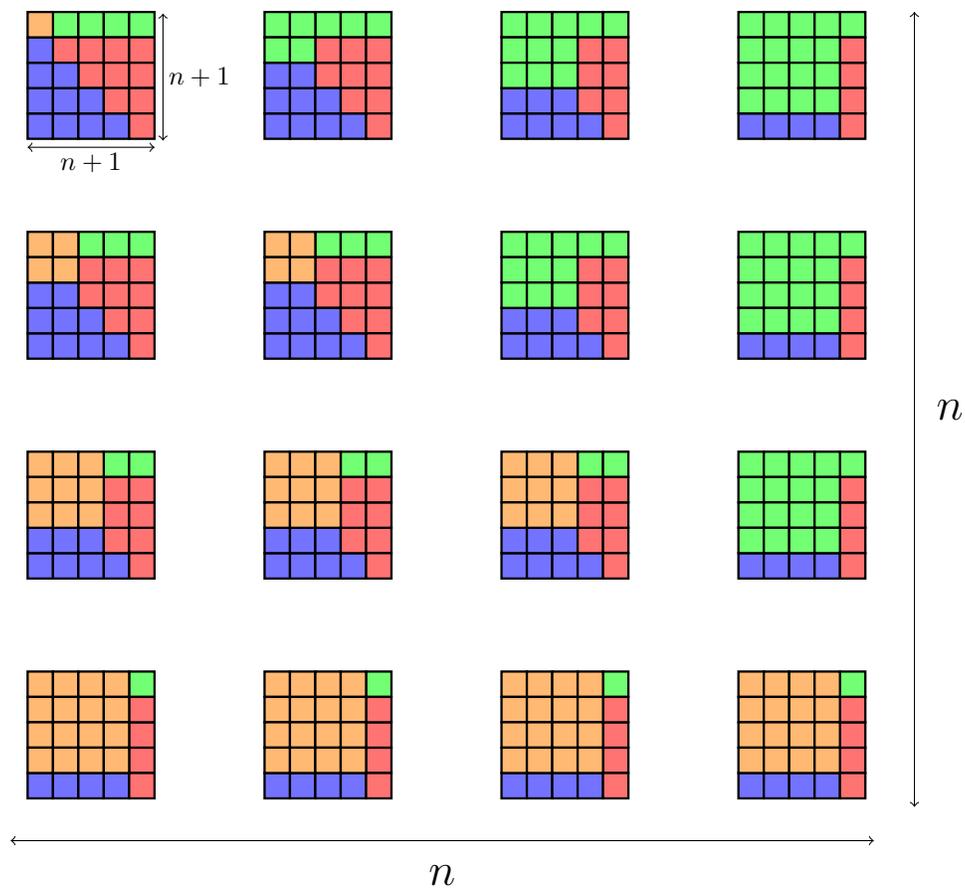
\captionof{figure}{First section with five 5D pyramids}
\label{first with 5D}

\begin{tikzpicture}[scale=0.45]


\begin{scope}[shift={(0,0)}]

\begin{scope}[shift={(17.5,-4.25)}]
\foreach \x in {0, 0.75, 1.5, 2.25} {
            \draw [fill=blue!55, line width=0.3mm] (\x,0) rectangle (\x+0.75, 0.75);

    }
    
    \foreach \x in {0, 0.75, 1.5} { 
     \draw [fill=red!55, line width=0.3mm] (\x+1.5,2.25) rectangle (\x+2.25, 3);
    \draw [fill=green!55, line width=0.3mm] (\x+1.5,3) rectangle (\x+2.25, 3.75);
            \draw [fill=blue!55, line width=0.3mm] (\x,0.75) rectangle (\x+0.75, 1.5);
            
            \draw [fill=red!55, line width=0.3mm] (\x +1.5,1.5) rectangle (\x+2.25, 2.25);
        }
        
         \foreach \x in {0, 0.75} { 
            \draw [fill=blue!55, line width=0.3mm] (\x,1.5) rectangle (\x+0.75, 2.25);
            \draw [fill=red!55, line width=0.3mm] (\x+2.25,0.75) rectangle (\x+3, 1.5);
        }

    \draw [fill=red!55, line width=0.3mm] (3,0) rectangle (3.75, 0.75);

\begin{scope}[shift={(0,0)}]
\foreach \x in {0, 0.75} { 
     \foreach \y in {0, -0.75}{
      \draw [fill=pink!40, line width=0.3mm] (\x,\y+3) rectangle (\x+0.75,\y+3.75);
      }
}
    \end{scope}

\end{scope}

\begin{scope}[shift={(17.5,-10.75)}]
\foreach \x in {0, 0.75, 1.5, 2.25} {
            \draw [fill=blue!55, line width=0.3mm] (\x,0) rectangle (\x+0.75, 0.75);

    }
    
    \foreach \x in {0, 0.75, 1.5} { 
            \draw [fill=blue!55, line width=0.3mm] (\x,0.75) rectangle (\x+0.75, 1.5);
            \draw [fill=green!55, line width=0.3mm] (\x+1.5,3) rectangle (\x+2.25, 3.75);
            
            \draw [fill=red!55, line width=0.3mm] (\x +1.5,1.5) rectangle (\x+2.25, 2.25);
            \draw [fill=red!55, line width=0.3mm] (\x+1.5,2.25) rectangle (\x+2.25, 3);
        }
        
         \foreach \x in {0, 0.75} { 
            \draw [fill=blue!55, line width=0.3mm] (\x,1.5) rectangle (\x+0.75, 2.25);
            \draw [fill=red!55, line width=0.3mm] (\x+2.25,0.75) rectangle (\x+3, 1.5);
        }

    \draw [fill=red!55, line width=0.3mm] (3,0) rectangle (3.75, 0.75);

     \foreach \x in {0, 0.75} { 
     \foreach \y in {0, -0.75}{
      \draw [fill=orange!55, line width=0.3mm] (\x,\y+3) rectangle (\x+0.75,\y+3.75);
      }
}

\end{scope}

\begin{scope}[shift={(17.5,-17.25)}]
\foreach \x in {0, 0.75, 1.5, 2.25} {
            \draw [fill=blue!55, line width=0.3mm] (\x,0) rectangle (\x+0.75, 0.75);

    }
    
    \foreach \x in {0, 0.75, 1.5} { 
            \draw [fill=blue!55, line width=0.3mm] (\x,0.75) rectangle (\x+0.75, 1.5);

        }
        
         \foreach \x in {0, 0.75} { 
             \draw [fill=green!55, line width=0.3mm] (\x+2.25,3) rectangle (\x+3, 3.75);
            \draw [fill=red!55, line width=0.3mm] (\x+2.25,0.75) rectangle (\x+3, 1.5);
            
              \draw [fill=red!55, line width=0.3mm] (\x +2.25,1.5) rectangle (\x+3, 2.25);
            \draw [fill=red!55, line width=0.3mm] (\x+2.25,2.25) rectangle (\x+3, 3);
        }

    \draw [fill=red!55, line width=0.3mm] (3,0) rectangle (3.75, 0.75);

     \foreach \x in {0, 0.75, 1.5} { 
     \foreach \y in {0, -0.75, -1.5}{
      \draw [fill=orange!55, line width=0.3mm] (\x,\y+3) rectangle (\x+0.75,\y+3.75);
      }
}

\end{scope}

\begin{scope}[shift={(17.5,-23.75)}]
\foreach \x in {0, 0.75, 1.5, 2.25} {
            \draw [fill=blue!55, line width=0.3mm] (\x,0) rectangle (\x+0.75, 0.75);

    }

         \foreach \x in {0.75} { 
            
              \draw [fill=green!55, line width=0.3mm] (\x+2.25,3) rectangle (\x+3, 3.75);
            \draw [fill=red!55, line width=0.3mm] (\x+2.25,0.75) rectangle (\x+3, 1.5);
            
              \draw [fill=red!55, line width=0.3mm] (\x +2.25,1.5) rectangle (\x+3, 2.25);
            \draw [fill=red!55, line width=0.3mm] (\x+2.25,2.25) rectangle (\x+3, 3);
        }

    \draw [fill=red!55, line width=0.3mm] (3,0) rectangle (3.75, 0.75);

     \foreach \x in {0, 0.75, 1.5, 2.25} { 
     \foreach \y in {0, -0.75, -1.5, -2.25}{
      \draw [fill=orange!55, line width=0.3mm] (\x,\y+3) rectangle (\x+0.75,\y+3.75);
      }
}

\end{scope}

\end{scope}


\begin{scope}[shift={(7,0)}]

\begin{scope}[shift={(17.5,-4.25)}]
\foreach \x in {0, 0.75, 1.5, 2.25} {
            \draw [fill=blue!55, line width=0.3mm] (\x,0) rectangle (\x+0.75, 0.75);

    }
    
    \foreach \x in {0, 0.75, 1.5} { 
  \draw [fill=green!55, line width=0.3mm] (\x+1.5,3) rectangle (\x+2.25, 3.75);    
    
            \draw [fill=blue!55, line width=0.3mm] (\x,0.75) rectangle (\x+0.75, 1.5);
            
            \draw [fill=red!55, line width=0.3mm] (\x +1.5,1.5) rectangle (\x+2.25, 2.25);
            
             \draw [fill=red!55, line width=0.3mm] (\x+1.5,2.25) rectangle (\x+2.25, 3);
        }
        
         \foreach \x in {0, 0.75} { 
            \draw [fill=blue!55, line width=0.3mm] (\x,1.5) rectangle (\x+0.75, 2.25);
            \draw [fill=red!55, line width=0.3mm] (\x+2.25,0.75) rectangle (\x+3, 1.5);
        }
    
    
    \draw [fill=red!55, line width=0.3mm] (3,0) rectangle (3.75, 0.75);

    

\end{scope}

\begin{scope}[shift={(17.5,-10.75)}]
\foreach \x in {0, 0.75, 1.5, 2.25} {
            \draw [fill=blue!55, line width=0.3mm] (\x,0) rectangle (\x+0.75, 0.75);

    }
    
    \foreach \x in {0, 0.75, 1.5} { 
    
    \draw [fill=green!55, line width=0.3mm] (\x+1.5,3) rectangle (\x+2.25, 3.75);
            \draw [fill=blue!55, line width=0.3mm] (\x,0.75) rectangle (\x+0.75, 1.5);
            
            \draw [fill=red!55, line width=0.3mm] (\x +1.5,1.5) rectangle (\x+2.25, 2.25);
            \draw [fill=red!55, line width=0.3mm] (\x+1.5,2.25) rectangle (\x+2.25, 3);
        }
        
         \foreach \x in {0, 0.75} { 
            \draw [fill=blue!55, line width=0.3mm] (\x,1.5) rectangle (\x+0.75, 2.25);
            \draw [fill=red!55, line width=0.3mm] (\x+2.25,0.75) rectangle (\x+3, 1.5);
        }

    \draw [fill=red!55, line width=0.3mm] (3,0) rectangle (3.75, 0.75);

     \foreach \x in {0, 0.75} { 
     \foreach \y in {0, -0.75}{
      \draw [fill=orange!55, line width=0.3mm] (\x,\y+3) rectangle (\x+0.75,\y+3.75);
      }
}

\begin{scope}[shift={(0,6.5)}]
 \foreach \x in {0, 0.75} { 
     \foreach \y in {0, -0.75}{
      \draw [fill=green!55, line width=0.3mm] (\x,\y+3) rectangle (\x+0.75,\y+3.75);
      }
}
\end{scope}

\end{scope}

\begin{scope}[shift={(17.5,-17.25)}]
\foreach \x in {0, 0.75, 1.5, 2.25} {
            \draw [fill=blue!55, line width=0.3mm] (\x,0) rectangle (\x+0.75, 0.75);

    }
    
    \foreach \x in {0, 0.75, 1.5} { 
            \draw [fill=blue!55, line width=0.3mm] (\x,0.75) rectangle (\x+0.75, 1.5);

        }
        
         \foreach \x in {0, 0.75} { 
            \draw [fill=green!55, line width=0.3mm] (\x+2.25,3) rectangle (\x+3, 3.75);
            
            \draw [fill=red!55, line width=0.3mm] (\x+2.25,0.75) rectangle (\x+3, 1.5);
            
              \draw [fill=red!55, line width=0.3mm] (\x +2.25,1.5) rectangle (\x+3, 2.25);
            \draw [fill=red!55, line width=0.3mm] (\x+2.25,2.25) rectangle (\x+3, 3);
        }

    \draw [fill=red!55, line width=0.3mm] (3,0) rectangle (3.75, 0.75);

     \foreach \x in {0, 0.75, 1.5} { 
     \foreach \y in {0, -0.75, -1.5}{
      \draw [fill=orange!55, line width=0.3mm] (\x,\y+3) rectangle (\x+0.75,\y+3.75);
      }
}

\end{scope}

\begin{scope}[shift={(17.5,-23.75)}]
\foreach \x in {0, 0.75, 1.5, 2.25} {
            \draw [fill=blue!55, line width=0.3mm] (\x,0) rectangle (\x+0.75, 0.75);

    }

         \foreach \x in {0.75} { 
            
\draw [fill=green!55, line width=0.3mm] (\x+2.25,3) rectangle (\x+3, 3.75);            
            
            \draw [fill=red!55, line width=0.3mm] (\x+2.25,0.75) rectangle (\x+3, 1.5);
            
              \draw [fill=red!55, line width=0.3mm] (\x +2.25,1.5) rectangle (\x+3, 2.25);
            \draw [fill=red!55, line width=0.3mm] (\x+2.25,2.25) rectangle (\x+3, 3);
        }

    \draw [fill=red!55, line width=0.3mm] (3,0) rectangle (3.75, 0.75);

     \foreach \x in {0, 0.75, 1.5, 2.25} { 
     \foreach \y in {0, -0.75, -1.5, -2.25}{
      \draw [fill=orange!55, line width=0.3mm] (\x,\y+3) rectangle (\x+0.75,\y+3.75);
      }
}

\end{scope}

\end{scope}


\begin{scope}[shift={(14,0)}]


\begin{scope}[shift={(17.5,-4.25)}]
\foreach \x in {0, 0.75, 1.5, 2.25} {
            \draw [fill=blue!55, line width=0.3mm] (\x,0) rectangle (\x+0.75, 0.75);

   }
    
    \foreach \x in {0, 0.75, 1.5} { 
            \draw [fill=blue!55, line width=0.3mm] (\x,0.75) rectangle (\x+0.75, 1.5);

        }
        
         \foreach \x in {0, 0.75} { 
         
 \draw [fill=green!55, line width=0.3mm] (\x +2.25,3) rectangle (\x+3, 3.75);         
         
          \draw [fill=red!55, line width=0.3mm] (\x +2.25,1.5) rectangle (\x+3, 2.25);
            
             \draw [fill=red!55, line width=0.3mm] (\x+2.25,2.25) rectangle (\x+3, 3);
            \draw [fill=red!55, line width=0.3mm] (\x+2.25,0.75) rectangle (\x+3, 1.5);
        }
    
    
    \draw [fill=red!55, line width=0.3mm] (3,0) rectangle (3.75, 0.75);

    

\end{scope}

\begin{scope}[shift={(17.5,-10.75)}]
\foreach \x in {0, 0.75, 1.5, 2.25} {
            \draw [fill=blue!55, line width=0.3mm] (\x,0) rectangle (\x+0.75, 0.75);

    }
    
    \foreach \x in {0, 0.75, 1.5} { 
            \draw [fill=blue!55, line width=0.3mm] (\x,0.75) rectangle (\x+0.75, 1.5);

        }
        
         \foreach \x in {0, 0.75} { 
         \draw [fill=green!55, line width=0.3mm] (\x +2.25,3) rectangle (\x+3, 3.75);
         
  \draw [fill=red!55, line width=0.3mm] (\x +2.25,1.5) rectangle (\x+3, 2.25);
            \draw [fill=red!55, line width=0.3mm] (\x+2.25,2.25) rectangle (\x+3, 3);         
         
            \draw [fill=red!55, line width=0.3mm] (\x+2.25,0.75) rectangle (\x+3, 1.5);
        }

    \draw [fill=red!55, line width=0.3mm] (3,0) rectangle (3.75, 0.75);

\end{scope}

\begin{scope}[shift={(17.5,-17.25)}]
\foreach \x in {0, 0.75, 1.5, 2.25} {
            \draw [fill=blue!55, line width=0.3mm] (\x,0) rectangle (\x+0.75, 0.75);

    }
    
    \foreach \x in {0, 0.75, 1.5} { 
            \draw [fill=blue!55, line width=0.3mm] (\x,0.75) rectangle (\x+0.75, 1.5);

        }
        
         \foreach \x in {0, 0.75} { 
            
\draw [fill=green!55, line width=0.3mm] (\x+2.25,3) rectangle (\x+3, 3.75);            
            
            \draw [fill=red!55, line width=0.3mm] (\x+2.25,0.75) rectangle (\x+3, 1.5);
            
              \draw [fill=red!55, line width=0.3mm] (\x +2.25,1.5) rectangle (\x+3, 2.25);
            \draw [fill=red!55, line width=0.3mm] (\x+2.25,2.25) rectangle (\x+3, 3);
        }

    \draw [fill=red!55, line width=0.3mm] (3,0) rectangle (3.75, 0.75);

     \foreach \x in {0, 0.75, 1.5} { 
     \foreach \y in {0, -0.75, -1.5}{
      \draw [fill=orange!55, line width=0.3mm] (\x,\y+3) rectangle (\x+0.75,\y+3.75);
      }
}

\begin{scope}[shift={(0,6.5)}]
\foreach \x in {0, 0.75, 1.5} { 
     \foreach \y in {0, -0.75, -1.5}{
      \draw [fill=green!55, line width=0.3mm] (\x,\y+3) rectangle (\x+0.75,\y+3.75);
      }
}
\end{scope}

\begin{scope}[shift={(0,13)}]
\foreach \x in {0, 0.75, 1.5} { 
     \foreach \y in {0, -0.75, -1.5}{
      \draw [fill=green!55, line width=0.3mm] (\x,\y+3) rectangle (\x+0.75,\y+3.75);
      }
}
\end{scope}

\end{scope}

\begin{scope}[shift={(17.5,-23.75)}]
\foreach \x in {0, 0.75, 1.5, 2.25} {
            \draw [fill=blue!55, line width=0.3mm] (\x,0) rectangle (\x+0.75, 0.75);

    }

         \foreach \x in {0.75} { 
             \draw [fill=green!55, line width=0.3mm] (\x+2.25,3) rectangle (\x+3, 3.75); 
             
            \draw [fill=red!55, line width=0.3mm] (\x+2.25,0.75) rectangle (\x+3, 1.5);
            
              \draw [fill=red!55, line width=0.3mm] (\x +2.25,1.5) rectangle (\x+3, 2.25);
            \draw [fill=red!55, line width=0.3mm] (\x+2.25,2.25) rectangle (\x+3, 3);
        }

    \draw [fill=red!55, line width=0.3mm] (3,0) rectangle (3.75, 0.75);

     \foreach \x in {0, 0.75, 1.5, 2.25} { 
     \foreach \y in {0, -0.75, -1.5, -2.25}{
      \draw [fill=orange!55, line width=0.3mm] (\x,\y+3) rectangle (\x+0.75,\y+3.75);
      }
}

\end{scope}

\end{scope}


\begin{scope}[shift={(21,0)}]


\begin{scope}[shift={(17.5,-4.25)}]
\foreach \x in {0, 0.75, 1.5, 2.25} {
            \draw [fill=blue!55, line width=0.3mm] (\x,0) rectangle (\x+0.75, 0.75);

   }
    
    \foreach \x in {0, 0.75, 1.5} { 

        }

\draw [fill=green!55, line width=0.3mm] (3,3) rectangle (3.75, 3.75);        
        
         \draw [fill=red!55, line width=0.3mm] (3,1.5) rectangle (3.75, 2.25);
            
             \draw [fill=red!55, line width=0.3mm] (3,2.25) rectangle (3.75, 3);
            
            \draw [fill=red!55, line width=0.3mm] (3,0.75) rectangle (3.75, 1.5);
 
    \draw [fill=red!55, line width=0.3mm] (3,0) rectangle (3.75, 0.75);

    

\end{scope}

\begin{scope}[shift={(17.5,-10.75)}]
\foreach \x in {0, 0.75, 1.5, 2.25} {
            \draw [fill=blue!55, line width=0.3mm] (\x,0) rectangle (\x+0.75, 0.75);

    }
    
   \draw [fill=green!55, line width=0.3mm] (3,3) rectangle (3.75, 3.75);
        
      \draw [fill=red!55, line width=0.3mm] (3,1.5) rectangle (3.75, 2.25);
            \draw [fill=red!55, line width=0.3mm] (3,2.25) rectangle (3.75, 3);

            \draw [fill=red!55, line width=0.3mm] (3,0.75) rectangle (3.75, 1.5);

    \draw [fill=red!55, line width=0.3mm] (3,0) rectangle (3.75, 0.75);

\end{scope}

\begin{scope}[shift={(17.5,-17.25)}]
\foreach \x in {0, 0.75, 1.5, 2.25} {
            \draw [fill=blue!55, line width=0.3mm] (\x,0) rectangle (\x+0.75, 0.75);

    }
    
    \draw [fill=green!55, line width=0.3mm] (3,3) rectangle (3.75, 3.75);
  
       \draw [fill=red!55, line width=0.3mm] (3,0.75) rectangle (3.75, 1.5);
            
              \draw [fill=red!55, line width=0.3mm] (3,1.5) rectangle (3.75, 2.25);
            \draw [fill=red!55, line width=0.3mm] (3,2.25) rectangle (3.75, 3);

    \draw [fill=red!55, line width=0.3mm] (3,0) rectangle (3.75, 0.75);

\end{scope}

\begin{scope}[shift={(17.5,-23.75)}]
\foreach \x in {0, 0.75, 1.5, 2.25} {
            \draw [fill=blue!55, line width=0.3mm] (\x,0) rectangle (\x+0.75, 0.75);

    }

         \foreach \x in {0.75} { 
            
\draw [fill=green!55, line width=0.3mm] (\x+2.25,3) rectangle (\x+3, 3.75);            
            
            \draw [fill=red!55, line width=0.3mm] (\x+2.25,0.75) rectangle (\x+3, 1.5);
            
              \draw [fill=red!55, line width=0.3mm] (\x +2.25,1.5) rectangle (\x+3, 2.25);
            \draw [fill=red!55, line width=0.3mm] (\x+2.25,2.25) rectangle (\x+3, 3);
        }

    \draw [fill=red!55, line width=0.3mm] (3,0) rectangle (3.75, 0.75);

     \foreach \x in {0, 0.75, 1.5, 2.25} { 
     \foreach \y in {0, -0.75, -1.5, -2.25}{
      \draw [fill=orange!55, line width=0.3mm] (\x,\y+3) rectangle (\x+0.75,\y+3.75);
      }
}


\begin{scope}[shift={(0,6.5)}]
 \foreach \x in {0, 0.75, 1.5, 2.25} { 
     \foreach \y in {0, -0.75, -1.5, -2.25}{
      \draw [fill=green!55, line width=0.3mm] (\x,\y+3) rectangle (\x+0.75,\y+3.75);
      }
}
\end{scope}

\begin{scope}[shift={(0,13)}]
 \foreach \x in {0, 0.75, 1.5, 2.25} { 
     \foreach \y in {0, -0.75, -1.5, -2.25}{
      \draw [fill=green!55, line width=0.3mm] (\x,\y+3) rectangle (\x+0.75,\y+3.75);
      }
}
\end{scope}

\begin{scope}[shift={(0,19.5)}]
 \foreach \x in {0, 0.75, 1.5, 2.25} { 
     \foreach \y in {0, -0.75, -1.5, -2.25}{
      \draw [fill=green!55, line width=0.3mm] (\x,\y+3) rectangle (\x+0.75,\y+3.75);
      }
}
\end{scope}

\end{scope}

\end{scope}


 \foreach \x in {0} { 
  \foreach \y in {0} { 
\begin{scope}[shift={(17.5+\x, -4.5-\y)}] 
   
\draw[<->] (0,0) -- (3.75,0) node[midway, below, yshift=0.5mm] {$n+1$};
\draw[<->] (4,0.2) -- (4,3.95) node[midway, right, xshift=-0.5mm] {$n+1$};

\end{scope}

}
}

\draw[<->] (17,-25) -- (42.5,-25) node[midway, below, yshift=-2mm] {\huge $n$};

\draw[<->] (43.7,-24) -- (43.7,-0.5) node[midway, right, xshift=1.7mm] {\huge $n$};

\end{tikzpicture}
\vskip .8cm

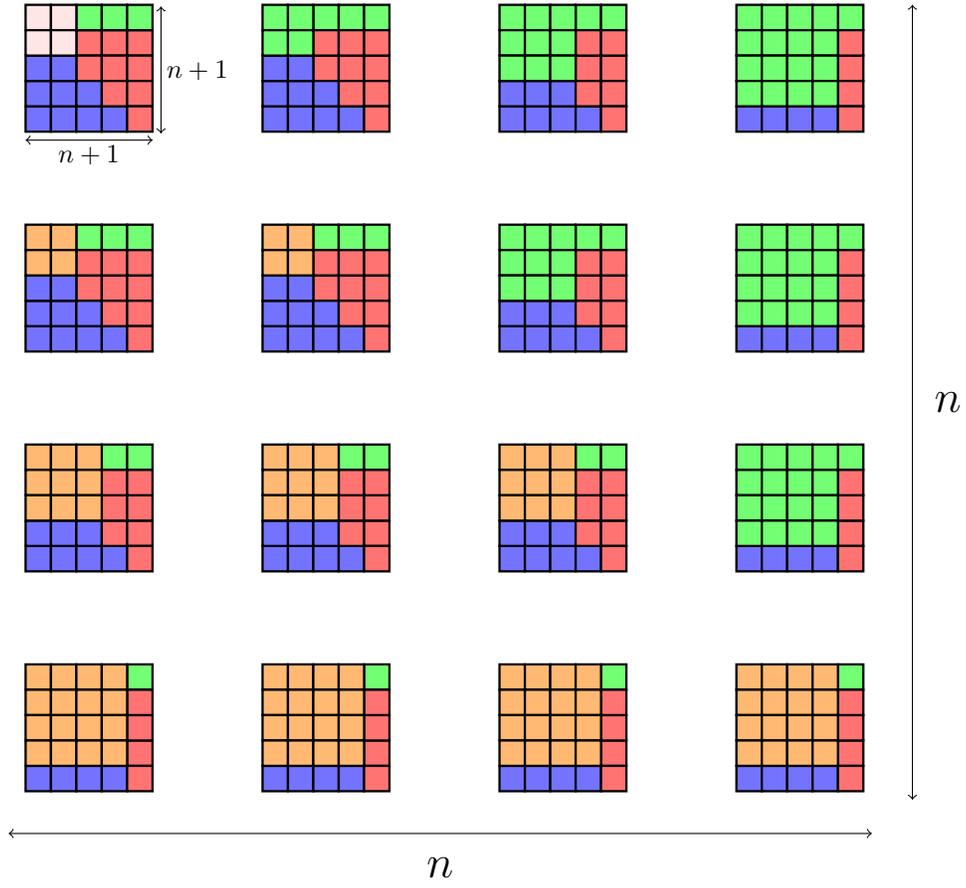
\captionof{figure}{Second section with five 5D pyramids}
\label{second with 5D}
\par
\vspace{5mm}
\noindent
\begin{tikzpicture}[scale=0.45]


\begin{scope}[shift={(0,0)}]

\begin{scope}[shift={(17.5,-4.25)}]
\foreach \x in {0, 0.75, 1.5, 2.25} {
            \draw [fill=blue!55, line width=0.3mm] (\x,0) rectangle (\x+0.75, 0.75);

    }
    
    \foreach \x in {0, 0.75, 1.5} {

            \draw [fill=blue!55, line width=0.3mm] (\x,0.75) rectangle (\x+0.75, 1.5);

        }
        
         \foreach \x in {0, 0.75} { 
         \draw [fill=green!55, line width=0.3mm] (\x+2.25,3) rectangle (\x+3, 3.75);
         
          \draw [fill=red!55, line width=0.3mm] (\x +2.25,1.5) rectangle (\x+3, 2.25);
            \draw [fill=red!55, line width=0.3mm] (\x+2.25,2.25) rectangle (\x+3, 3);
            
            \draw [fill=red!55, line width=0.3mm] (\x+2.25,0.75) rectangle (\x+3, 1.5);
        }

    \draw [fill=red!55, line width=0.3mm] (3,0) rectangle (3.75, 0.75);

\begin{scope}[shift={(0,0)}]
\foreach \x in {0, 0.75, 1.5} { 
     \foreach \y in {0, -0.75, -1.5}{
      \draw [fill=pink!40, line width=0.3mm] (\x,\y+3) rectangle (\x+0.75,\y+3.75);
      }
}
    \end{scope}

\end{scope}

\begin{scope}[shift={(17.5,-10.75)}]
\foreach \x in {0, 0.75, 1.5, 2.25} {
            \draw [fill=blue!55, line width=0.3mm] (\x,0) rectangle (\x+0.75, 0.75);

    }
    
    \foreach \x in {0, 0.75, 1.5} { 
            \draw [fill=blue!55, line width=0.3mm] (\x,0.75) rectangle (\x+0.75, 1.5);
            
        }
        
         \foreach \x in {0, 0.75} { 
\draw [fill=green!55, line width=0.3mm] (\x+2.25,3) rectangle (\x+3, 3.75);
            
            \draw [fill=red!55, line width=0.3mm] (\x +2.25,1.5) rectangle (\x+3, 2.25);
            \draw [fill=red!55, line width=0.3mm] (\x+2.25,2.25) rectangle (\x+3, 3);

            \draw [fill=red!55, line width=0.3mm] (\x+2.25,0.75) rectangle (\x+3, 1.5);
        }

    \draw [fill=red!55, line width=0.3mm] (3,0) rectangle (3.75, 0.75);

     \foreach \x in {0, 0.75, 1.5} { 
     \foreach \y in {0, -0.75, -1.5}{
      \draw [fill=pink!40, line width=0.3mm] (\x,\y+3) rectangle (\x+0.75,\y+3.75);
      }
}

\end{scope}

\begin{scope}[shift={(17.5,-17.25)}]
\foreach \x in {0, 0.75, 1.5, 2.25} {
            \draw [fill=blue!55, line width=0.3mm] (\x,0) rectangle (\x+0.75, 0.75);

    }
    
    \foreach \x in {0, 0.75, 1.5} { 
            \draw [fill=blue!55, line width=0.3mm] (\x,0.75) rectangle (\x+0.75, 1.5);

        }
        
         \foreach \x in {0, 0.75} { 
             \draw [fill=green!55, line width=0.3mm] (\x+2.25,3) rectangle (\x+3, 3.75);
            \draw [fill=red!55, line width=0.3mm] (\x+2.25,0.75) rectangle (\x+3, 1.5);
            
              \draw [fill=red!55, line width=0.3mm] (\x +2.25,1.5) rectangle (\x+3, 2.25);
            \draw [fill=red!55, line width=0.3mm] (\x+2.25,2.25) rectangle (\x+3, 3);
        }

    \draw [fill=red!55, line width=0.3mm] (3,0) rectangle (3.75, 0.75);

     \foreach \x in {0, 0.75, 1.5} { 
     \foreach \y in {0, -0.75, -1.5}{
      \draw [fill=orange!55, line width=0.3mm] (\x,\y+3) rectangle (\x+0.75,\y+3.75);
      }
}

\end{scope}

\begin{scope}[shift={(17.5,-23.75)}]
\foreach \x in {0, 0.75, 1.5, 2.25} {
            \draw [fill=blue!55, line width=0.3mm] (\x,0) rectangle (\x+0.75, 0.75);

    }

         \foreach \x in {0.75} { 
            
              \draw [fill=green!55, line width=0.3mm] (\x+2.25,3) rectangle (\x+3, 3.75);
            \draw [fill=red!55, line width=0.3mm] (\x+2.25,0.75) rectangle (\x+3, 1.5);
            
              \draw [fill=red!55, line width=0.3mm] (\x +2.25,1.5) rectangle (\x+3, 2.25);
            \draw [fill=red!55, line width=0.3mm] (\x+2.25,2.25) rectangle (\x+3, 3);
        }

    \draw [fill=red!55, line width=0.3mm] (3,0) rectangle (3.75, 0.75);

     \foreach \x in {0, 0.75, 1.5, 2.25} { 
     \foreach \y in {0, -0.75, -1.5, -2.25}{
      \draw [fill=orange!55, line width=0.3mm] (\x,\y+3) rectangle (\x+0.75,\y+3.75);
      }
}

\end{scope}

\end{scope}


\begin{scope}[shift={(7,0)}]

\begin{scope}[shift={(17.5,-4.25)}]
\foreach \x in {0, 0.75, 1.5, 2.25} {
            \draw [fill=blue!55, line width=0.3mm] (\x,0) rectangle (\x+0.75, 0.75);

    }
    
    \foreach \x in {0, 0.75, 1.5} { 
  
            \draw [fill=blue!55, line width=0.3mm] (\x,0.75) rectangle (\x+0.75, 1.5);

        }
        
         \foreach \x in {0, 0.75} { 
\draw [fill=green!55, line width=0.3mm] (\x+2.25,3) rectangle (\x+3, 3.75);

            \draw [fill=red!55, line width=0.3mm] (\x +2.25,1.5) rectangle (\x+3, 2.25);
            
             \draw [fill=red!55, line width=0.3mm] (\x+2.25,2.25) rectangle (\x+3, 3);

            \draw [fill=red!55, line width=0.3mm] (\x+2.25,0.75) rectangle (\x+3, 1.5);
        }
    
    
    \draw [fill=red!55, line width=0.3mm] (3,0) rectangle (3.75, 0.75);

    

\end{scope}

\begin{scope}[shift={(17.5,-10.75)}]
\foreach \x in {0, 0.75, 1.5, 2.25} {
            \draw [fill=blue!55, line width=0.3mm] (\x,0) rectangle (\x+0.75, 0.75);

    }
    
    \foreach \x in {0, 0.75, 1.5} {

            \draw [fill=blue!55, line width=0.3mm] (\x,0.75) rectangle (\x+0.75, 1.5);

        }
        
         \foreach \x in {0, 0.75} { 
         
         \draw [fill=green!55, line width=0.3mm] (\x+2.25,3) rectangle (\x+3, 3.75);
            \draw [fill=red!55, line width=0.3mm] (\x +2.25,1.5) rectangle (\x+3, 2.25);
            \draw [fill=red!55, line width=0.3mm] (\x+2.25,2.25) rectangle (\x+3, 3);

            \draw [fill=red!55, line width=0.3mm] (\x+2.25,0.75) rectangle (\x+3, 1.5);
        }

    \draw [fill=red!55, line width=0.3mm] (3,0) rectangle (3.75, 0.75);

     \foreach \x in {0, 0.75, 1.5} { 
     \foreach \y in {0, -0.75, -1.5}{
      \draw [fill=pink!40, line width=0.3mm] (\x,\y+3) rectangle (\x+0.75,\y+3.75);
      }
}

\begin{scope}[shift={(0,6.5)}]
 \foreach \x in {0, 0.75, 1.5} { 
     \foreach \y in {0, -0.75, -1.5}{
      \draw [fill=pink!40, line width=0.3mm] (\x,\y+3) rectangle (\x+0.75,\y+3.75);
      }
}
\end{scope}

\end{scope}

\begin{scope}[shift={(17.5,-17.25)}]
\foreach \x in {0, 0.75, 1.5, 2.25} {
            \draw [fill=blue!55, line width=0.3mm] (\x,0) rectangle (\x+0.75, 0.75);

    }
    
    \foreach \x in {0, 0.75, 1.5} { 
            \draw [fill=blue!55, line width=0.3mm] (\x,0.75) rectangle (\x+0.75, 1.5);

        }
        
         \foreach \x in {0, 0.75} { 
            \draw [fill=green!55, line width=0.3mm] (\x+2.25,3) rectangle (\x+3, 3.75);
            
            \draw [fill=red!55, line width=0.3mm] (\x+2.25,0.75) rectangle (\x+3, 1.5);
            
              \draw [fill=red!55, line width=0.3mm] (\x +2.25,1.5) rectangle (\x+3, 2.25);
            \draw [fill=red!55, line width=0.3mm] (\x+2.25,2.25) rectangle (\x+3, 3);
        }

    \draw [fill=red!55, line width=0.3mm] (3,0) rectangle (3.75, 0.75);

     \foreach \x in {0, 0.75, 1.5} { 
     \foreach \y in {0, -0.75, -1.5}{
      \draw [fill=orange!55, line width=0.3mm] (\x,\y+3) rectangle (\x+0.75,\y+3.75);
      }
}

\end{scope}

\begin{scope}[shift={(17.5,-23.75)}]
\foreach \x in {0, 0.75, 1.5, 2.25} {
            \draw [fill=blue!55, line width=0.3mm] (\x,0) rectangle (\x+0.75, 0.75);

    }

         \foreach \x in {0.75} { 
            
\draw [fill=green!55, line width=0.3mm] (\x+2.25,3) rectangle (\x+3, 3.75);            
            
            \draw [fill=red!55, line width=0.3mm] (\x+2.25,0.75) rectangle (\x+3, 1.5);
            
              \draw [fill=red!55, line width=0.3mm] (\x +2.25,1.5) rectangle (\x+3, 2.25);
            \draw [fill=red!55, line width=0.3mm] (\x+2.25,2.25) rectangle (\x+3, 3);
        }

    \draw [fill=red!55, line width=0.3mm] (3,0) rectangle (3.75, 0.75);

     \foreach \x in {0, 0.75, 1.5, 2.25} { 
     \foreach \y in {0, -0.75, -1.5, -2.25}{
      \draw [fill=orange!55, line width=0.3mm] (\x,\y+3) rectangle (\x+0.75,\y+3.75);
      }
}

\end{scope}

\end{scope}


\begin{scope}[shift={(14,0)}]


\begin{scope}[shift={(17.5,-4.25)}]
\foreach \x in {0, 0.75, 1.5, 2.25} {
            \draw [fill=blue!55, line width=0.3mm] (\x,0) rectangle (\x+0.75, 0.75);

   }
    
    \foreach \x in {0, 0.75, 1.5} { 
            \draw [fill=blue!55, line width=0.3mm] (\x,0.75) rectangle (\x+0.75, 1.5);

        }
        
         \foreach \x in {0, 0.75} { 
         
 \draw [fill=green!55, line width=0.3mm] (\x +2.25,3) rectangle (\x+3, 3.75);         
         
          \draw [fill=red!55, line width=0.3mm] (\x +2.25,1.5) rectangle (\x+3, 2.25);
            
             \draw [fill=red!55, line width=0.3mm] (\x+2.25,2.25) rectangle (\x+3, 3);
            \draw [fill=red!55, line width=0.3mm] (\x+2.25,0.75) rectangle (\x+3, 1.5);
        }
    
    
    \draw [fill=red!55, line width=0.3mm] (3,0) rectangle (3.75, 0.75);

    

\end{scope}

\begin{scope}[shift={(17.5,-10.75)}]
\foreach \x in {0, 0.75, 1.5, 2.25} {
            \draw [fill=blue!55, line width=0.3mm] (\x,0) rectangle (\x+0.75, 0.75);

    }
    
    \foreach \x in {0, 0.75, 1.5} { 
            \draw [fill=blue!55, line width=0.3mm] (\x,0.75) rectangle (\x+0.75, 1.5);

        }
        
         \foreach \x in {0, 0.75} { 
         \draw [fill=green!55, line width=0.3mm] (\x +2.25,3) rectangle (\x+3, 3.75);
         
  \draw [fill=red!55, line width=0.3mm] (\x +2.25,1.5) rectangle (\x+3, 2.25);
            \draw [fill=red!55, line width=0.3mm] (\x+2.25,2.25) rectangle (\x+3, 3);         
         
            \draw [fill=red!55, line width=0.3mm] (\x+2.25,0.75) rectangle (\x+3, 1.5);
        }

    \draw [fill=red!55, line width=0.3mm] (3,0) rectangle (3.75, 0.75);

\end{scope}

\begin{scope}[shift={(17.5,-17.25)}]
\foreach \x in {0, 0.75, 1.5, 2.25} {
            \draw [fill=blue!55, line width=0.3mm] (\x,0) rectangle (\x+0.75, 0.75);

    }
    
    \foreach \x in {0, 0.75, 1.5} { 
            \draw [fill=blue!55, line width=0.3mm] (\x,0.75) rectangle (\x+0.75, 1.5);

        }
        
         \foreach \x in {0, 0.75} { 
            
\draw [fill=green!55, line width=0.3mm] (\x+2.25,3) rectangle (\x+3, 3.75);            
            
            \draw [fill=red!55, line width=0.3mm] (\x+2.25,0.75) rectangle (\x+3, 1.5);
            
              \draw [fill=red!55, line width=0.3mm] (\x +2.25,1.5) rectangle (\x+3, 2.25);
            \draw [fill=red!55, line width=0.3mm] (\x+2.25,2.25) rectangle (\x+3, 3);
        }

    \draw [fill=red!55, line width=0.3mm] (3,0) rectangle (3.75, 0.75);

     \foreach \x in {0, 0.75, 1.5} { 
     \foreach \y in {0, -0.75, -1.5}{
      \draw [fill=orange!55, line width=0.3mm] (\x,\y+3) rectangle (\x+0.75,\y+3.75);
      }
}

\begin{scope}[shift={(0,6.5)}]
\foreach \x in {0, 0.75, 1.5} { 
     \foreach \y in {0, -0.75, -1.5}{
      \draw [fill=green!55, line width=0.3mm] (\x,\y+3) rectangle (\x+0.75,\y+3.75);
      }
}
\end{scope}

\begin{scope}[shift={(0,13)}]
\foreach \x in {0, 0.75, 1.5} { 
     \foreach \y in {0, -0.75, -1.5}{
      \draw [fill=green!55, line width=0.3mm] (\x,\y+3) rectangle (\x+0.75,\y+3.75);
      }
}
\end{scope}

\end{scope}

\begin{scope}[shift={(17.5,-23.75)}]
\foreach \x in {0, 0.75, 1.5, 2.25} {
            \draw [fill=blue!55, line width=0.3mm] (\x,0) rectangle (\x+0.75, 0.75);

    }

         \foreach \x in {0.75} { 
             \draw [fill=green!55, line width=0.3mm] (\x+2.25,3) rectangle (\x+3, 3.75); 
             
            \draw [fill=red!55, line width=0.3mm] (\x+2.25,0.75) rectangle (\x+3, 1.5);
            
              \draw [fill=red!55, line width=0.3mm] (\x +2.25,1.5) rectangle (\x+3, 2.25);
            \draw [fill=red!55, line width=0.3mm] (\x+2.25,2.25) rectangle (\x+3, 3);
        }

    \draw [fill=red!55, line width=0.3mm] (3,0) rectangle (3.75, 0.75);

     \foreach \x in {0, 0.75, 1.5, 2.25} { 
     \foreach \y in {0, -0.75, -1.5, -2.25}{
      \draw [fill=orange!55, line width=0.3mm] (\x,\y+3) rectangle (\x+0.75,\y+3.75);
      }
}

\end{scope}

\end{scope}


\begin{scope}[shift={(21,0)}]


\begin{scope}[shift={(17.5,-4.25)}]
\foreach \x in {0, 0.75, 1.5, 2.25} {
            \draw [fill=blue!55, line width=0.3mm] (\x,0) rectangle (\x+0.75, 0.75);

   }
    
    \foreach \x in {0, 0.75, 1.5} { 

        }

\draw [fill=green!55, line width=0.3mm] (3,3) rectangle (3.75, 3.75);        
        
         \draw [fill=red!55, line width=0.3mm] (3,1.5) rectangle (3.75, 2.25);
            
             \draw [fill=red!55, line width=0.3mm] (3,2.25) rectangle (3.75, 3);
            
            \draw [fill=red!55, line width=0.3mm] (3,0.75) rectangle (3.75, 1.5);
 
    \draw [fill=red!55, line width=0.3mm] (3,0) rectangle (3.75, 0.75);

    

\end{scope}

\begin{scope}[shift={(17.5,-10.75)}]
\foreach \x in {0, 0.75, 1.5, 2.25} {
            \draw [fill=blue!55, line width=0.3mm] (\x,0) rectangle (\x+0.75, 0.75);

    }
    
   \draw [fill=green!55, line width=0.3mm] (3,3) rectangle (3.75, 3.75);
        
      \draw [fill=red!55, line width=0.3mm] (3,1.5) rectangle (3.75, 2.25);
            \draw [fill=red!55, line width=0.3mm] (3,2.25) rectangle (3.75, 3);

            \draw [fill=red!55, line width=0.3mm] (3,0.75) rectangle (3.75, 1.5);

    \draw [fill=red!55, line width=0.3mm] (3,0) rectangle (3.75, 0.75);

\end{scope}

\begin{scope}[shift={(17.5,-17.25)}]
\foreach \x in {0, 0.75, 1.5, 2.25} {
            \draw [fill=blue!55, line width=0.3mm] (\x,0) rectangle (\x+0.75, 0.75);

    }
    
    \draw [fill=green!55, line width=0.3mm] (3,3) rectangle (3.75, 3.75);
  
       \draw [fill=red!55, line width=0.3mm] (3,0.75) rectangle (3.75, 1.5);
            
              \draw [fill=red!55, line width=0.3mm] (3,1.5) rectangle (3.75, 2.25);
            \draw [fill=red!55, line width=0.3mm] (3,2.25) rectangle (3.75, 3);

    \draw [fill=red!55, line width=0.3mm] (3,0) rectangle (3.75, 0.75);

\end{scope}

\begin{scope}[shift={(17.5,-23.75)}]
\foreach \x in {0, 0.75, 1.5, 2.25} {
            \draw [fill=blue!55, line width=0.3mm] (\x,0) rectangle (\x+0.75, 0.75);

    }

         \foreach \x in {0.75} { 
            
\draw [fill=green!55, line width=0.3mm] (\x+2.25,3) rectangle (\x+3, 3.75);            
            
            \draw [fill=red!55, line width=0.3mm] (\x+2.25,0.75) rectangle (\x+3, 1.5);
            
              \draw [fill=red!55, line width=0.3mm] (\x +2.25,1.5) rectangle (\x+3, 2.25);
            \draw [fill=red!55, line width=0.3mm] (\x+2.25,2.25) rectangle (\x+3, 3);
        }

    \draw [fill=red!55, line width=0.3mm] (3,0) rectangle (3.75, 0.75);

     \foreach \x in {0, 0.75, 1.5, 2.25} { 
     \foreach \y in {0, -0.75, -1.5, -2.25}{
      \draw [fill=orange!55, line width=0.3mm] (\x,\y+3) rectangle (\x+0.75,\y+3.75);
      }
}


\begin{scope}[shift={(0,6.5)}]
 \foreach \x in {0, 0.75, 1.5, 2.25} { 
     \foreach \y in {0, -0.75, -1.5, -2.25}{
      \draw [fill=green!55, line width=0.3mm] (\x,\y+3) rectangle (\x+0.75,\y+3.75);
      }
}
\end{scope}

\begin{scope}[shift={(0,13)}]
 \foreach \x in {0, 0.75, 1.5, 2.25} { 
     \foreach \y in {0, -0.75, -1.5, -2.25}{
      \draw [fill=green!55, line width=0.3mm] (\x,\y+3) rectangle (\x+0.75,\y+3.75);
      }
}
\end{scope}

\begin{scope}[shift={(0,19.5)}]
 \foreach \x in {0, 0.75, 1.5, 2.25} { 
     \foreach \y in {0, -0.75, -1.5, -2.25}{
      \draw [fill=green!55, line width=0.3mm] (\x,\y+3) rectangle (\x+0.75,\y+3.75);
      }
}
\end{scope}

\end{scope}

\end{scope}


 \foreach \x in {0} { 
  \foreach \y in {0} { 
\begin{scope}[shift={(17.5+\x, -4.5-\y)}] 
   
\draw[<->] (0,0) -- (3.75,0) node[midway, below, yshift=0.5mm] {$n+1$};
\draw[<->] (4,0.2) -- (4,3.95) node[midway, right, xshift=-0.5mm] {$n+1$};

\end{scope}

}
}

\draw[<->] (17,-25) -- (42.5,-25) node[midway, below, yshift=-2mm] {\huge $n$};

\draw[<->] (43.7,-24) -- (43.7,-0.5) node[midway, right, xshift=1.7mm] {\huge $n$};

\end{tikzpicture}
\vskip .8cm

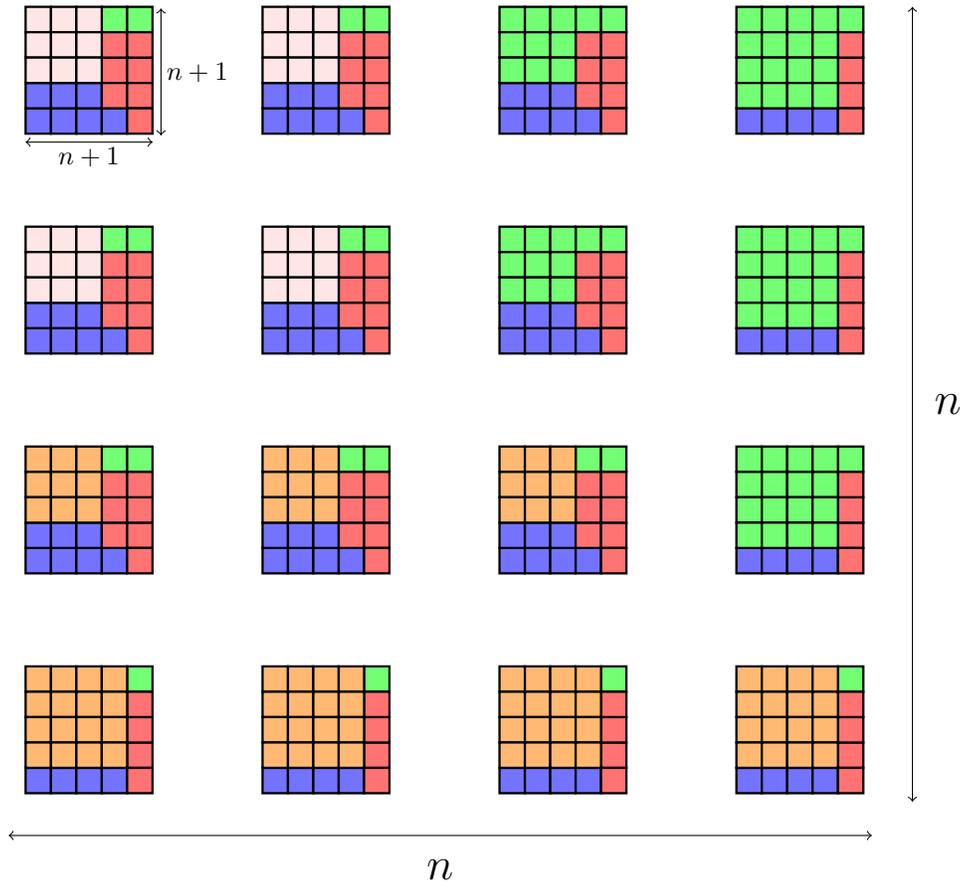
\captionof{figure}{Third section with five 5D pyramids}
\label{third with 5D}
\par
\vspace{5mm}
\noindent
\begin{tikzpicture}[scale=0.45]


\begin{scope}[shift={(0,0)}]

\begin{scope}[shift={(17.5,-4.25)}]
\foreach \x in {0, 0.75, 1.5, 2.25} {
            \draw [fill=blue!55, line width=0.3mm] (\x,0) rectangle (\x+0.75, 0.75);

    }

        \draw [fill=green!55, line width=0.3mm] (3,3) rectangle (3.75, 3.75);
         
          \draw [fill=red!55, line width=0.3mm] (3,1.5) rectangle (3.75, 2.25);
            \draw [fill=red!55, line width=0.3mm] (3,2.25) rectangle (3.75, 3);
            
            \draw [fill=red!55, line width=0.3mm] (3,0.75) rectangle (3.75, 1.5);

    \draw [fill=red!55, line width=0.3mm] (3,0) rectangle (3.75, 0.75);

\begin{scope}[shift={(0,0)}]
\foreach \x in {0, 0.75, 1.5, 2.25} { 
     \foreach \y in {0, -0.75, -1.5, -2.25}{
      \draw [fill=pink!40, line width=0.3mm] (\x,\y+3) rectangle (\x+0.75,\y+3.75);
      }
}
    \end{scope}

\end{scope}

\begin{scope}[shift={(17.5,-10.75)}]
\foreach \x in {0, 0.75, 1.5, 2.25} {
            \draw [fill=blue!55, line width=0.3mm] (\x,0) rectangle (\x+0.75, 0.75);

    }

    \draw [fill=green!55, line width=0.3mm] (3,3) rectangle (3.75, 3.75);
            
            \draw [fill=red!55, line width=0.3mm] (3,1.5) rectangle (3.75, 2.25);
            \draw [fill=red!55, line width=0.3mm] (3,2.25) rectangle (3.75, 3);

            \draw [fill=red!55, line width=0.3mm] (3,0.75) rectangle (3.75, 1.5);

    \draw [fill=red!55, line width=0.3mm] (3,0) rectangle (3.75, 0.75);

     \foreach \x in {0, 0.75, 1.5, 2.25} { 
     \foreach \y in {0, -0.75, -1.5, -2.25}{
      \draw [fill=pink!40, line width=0.3mm] (\x,\y+3) rectangle (\x+0.75,\y+3.75);
      }
}

\end{scope}

\begin{scope}[shift={(17.5,-17.25)}]
\foreach \x in {0, 0.75, 1.5, 2.25} {
            \draw [fill=blue!55, line width=0.3mm] (\x,0) rectangle (\x+0.75, 0.75);

    }

        \draw [fill=green!55, line width=0.3mm] (3,3) rectangle (3.75, 3.75);
            \draw [fill=red!55, line width=0.3mm] (3,0.75) rectangle (3.75, 1.5);
            
              \draw [fill=red!55, line width=0.3mm] (3,1.5) rectangle (3.75, 2.25);
            \draw [fill=red!55, line width=0.3mm] (3,2.25) rectangle (3.75, 3);

    \draw [fill=red!55, line width=0.3mm] (3,0) rectangle (3.75, 0.75);

     \foreach \x in {0, 0.75, 1.5, 2.25} { 
     \foreach \y in {0, -0.75, -1.5, -2.25}{
      \draw [fill=pink!40, line width=0.3mm] (\x,\y+3) rectangle (\x+0.75,\y+3.75);
      }
}

\end{scope}

\begin{scope}[shift={(17.5,-23.75)}]
\foreach \x in {0, 0.75, 1.5, 2.25} {
            \draw [fill=blue!55, line width=0.3mm] (\x,0) rectangle (\x+0.75, 0.75);

    }

         \foreach \x in {0.75} { 
            
              \draw [fill=green!55, line width=0.3mm] (\x+2.25,3) rectangle (\x+3, 3.75);
            \draw [fill=red!55, line width=0.3mm] (\x+2.25,0.75) rectangle (\x+3, 1.5);
            
              \draw [fill=red!55, line width=0.3mm] (\x +2.25,1.5) rectangle (\x+3, 2.25);
            \draw [fill=red!55, line width=0.3mm] (\x+2.25,2.25) rectangle (\x+3, 3);
        }

    \draw [fill=red!55, line width=0.3mm] (3,0) rectangle (3.75, 0.75);

     \foreach \x in {0, 0.75, 1.5, 2.25} { 
     \foreach \y in {0, -0.75, -1.5, -2.25}{
      \draw [fill=orange!55, line width=0.3mm] (\x,\y+3) rectangle (\x+0.75,\y+3.75);
      }
}

\end{scope}

\end{scope}


\begin{scope}[shift={(7,0)}]

\begin{scope}[shift={(17.5,-4.25)}]
\foreach \x in {0, 0.75, 1.5, 2.25} {
            \draw [fill=blue!55, line width=0.3mm] (\x,0) rectangle (\x+0.75, 0.75);

    }

      \draw [fill=green!55, line width=0.3mm] (3,3) rectangle (3.75, 3.75);    
            \draw [fill=red!55, line width=0.3mm] (3,1.5) rectangle (3.75, 2.25);
             \draw [fill=red!55, line width=0.3mm] (3,2.25) rectangle (3.75, 3);         
            \draw [fill=red!55, line width=0.3mm] (3,0.75) rectangle (3.75, 1.5);
    
    
    \draw [fill=red!55, line width=0.3mm] (3,0) rectangle (3.75, 0.75);

    

\end{scope}

\begin{scope}[shift={(17.5,-10.75)}]
\foreach \x in {0, 0.75, 1.5, 2.25} {
            \draw [fill=blue!55, line width=0.3mm] (\x,0) rectangle (\x+0.75, 0.75);

    }
    
    \foreach \x in {0, 0.75, 1.5} {

            \draw [fill=blue!55, line width=0.3mm] (\x,0.75) rectangle (\x+0.75, 1.5);

        }
        
         \foreach \x in {0, 0.75} { 
         
         \draw [fill=green!55, line width=0.3mm] (\x+2.25,3) rectangle (\x+3, 3.75);
            \draw [fill=red!55, line width=0.3mm] (\x +2.25,1.5) rectangle (\x+3, 2.25);
            \draw [fill=red!55, line width=0.3mm] (\x+2.25,2.25) rectangle (\x+3, 3);

            \draw [fill=red!55, line width=0.3mm] (\x+2.25,0.75) rectangle (\x+3, 1.5);
        }

    \draw [fill=red!55, line width=0.3mm] (3,0) rectangle (3.75, 0.75);

     \foreach \x in {0, 0.75, 1.5, 2.25} { 
     \foreach \y in {0, -0.75, -1.5, -2.25}{
      \draw [fill=pink!40, line width=0.3mm] (\x,\y+3) rectangle (\x+0.75,\y+3.75);
      }
}

\begin{scope}[shift={(0,6.5)}]
 \foreach \x in {0, 0.75, 1.5, 2.25} { 
     \foreach \y in {0, -0.75, -1.5, -2.25}{
      \draw [fill=pink!40, line width=0.3mm] (\x,\y+3) rectangle (\x+0.75,\y+3.75);
      }
}
\end{scope}

\end{scope}

\begin{scope}[shift={(17.5,-17.25)}]
\foreach \x in {0, 0.75, 1.5, 2.25} {
            \draw [fill=blue!55, line width=0.3mm] (\x,0) rectangle (\x+0.75, 0.75);

    }
    
    \foreach \x in {0, 0.75, 1.5} { 
            \draw [fill=blue!55, line width=0.3mm] (\x,0.75) rectangle (\x+0.75, 1.5);

        }
        
         \foreach \x in {0, 0.75} { 
            \draw [fill=green!55, line width=0.3mm] (\x+2.25,3) rectangle (\x+3, 3.75);
            
            \draw [fill=red!55, line width=0.3mm] (\x+2.25,0.75) rectangle (\x+3, 1.5);
            
              \draw [fill=red!55, line width=0.3mm] (\x +2.25,1.5) rectangle (\x+3, 2.25);
            \draw [fill=red!55, line width=0.3mm] (\x+2.25,2.25) rectangle (\x+3, 3);
        }

    \draw [fill=red!55, line width=0.3mm] (3,0) rectangle (3.75, 0.75);

     \foreach \x in {0, 0.75, 1.5, 2.25} { 
     \foreach \y in {0, -0.75, -1.5, -2.25}{
      \draw [fill=pink!40, line width=0.3mm] (\x,\y+3) rectangle (\x+0.75,\y+3.75);
      }
}

\end{scope}

\begin{scope}[shift={(17.5,-23.75)}]
\foreach \x in {0, 0.75, 1.5, 2.25} {
            \draw [fill=blue!55, line width=0.3mm] (\x,0) rectangle (\x+0.75, 0.75);

    }

         \foreach \x in {0.75} { 
            
\draw [fill=green!55, line width=0.3mm] (\x+2.25,3) rectangle (\x+3, 3.75);            
            
            \draw [fill=red!55, line width=0.3mm] (\x+2.25,0.75) rectangle (\x+3, 1.5);
            
              \draw [fill=red!55, line width=0.3mm] (\x +2.25,1.5) rectangle (\x+3, 2.25);
            \draw [fill=red!55, line width=0.3mm] (\x+2.25,2.25) rectangle (\x+3, 3);
        }

    \draw [fill=red!55, line width=0.3mm] (3,0) rectangle (3.75, 0.75);

     \foreach \x in {0, 0.75, 1.5, 2.25} { 
     \foreach \y in {0, -0.75, -1.5, -2.25}{
      \draw [fill=orange!55, line width=0.3mm] (\x,\y+3) rectangle (\x+0.75,\y+3.75);
      }
}

\end{scope}

\end{scope}


\begin{scope}[shift={(14,0)}]


\begin{scope}[shift={(17.5,-4.25)}]
\foreach \x in {0, 0.75, 1.5, 2.25} {
            \draw [fill=blue!55, line width=0.3mm] (\x,0) rectangle (\x+0.75, 0.75);

   }
    
    \foreach \x in {0, 0.75, 1.5} { 
            \draw [fill=blue!55, line width=0.3mm] (\x,0.75) rectangle (\x+0.75, 1.5);

        }
        
         \foreach \x in {0, 0.75} { 
         
 \draw [fill=green!55, line width=0.3mm] (\x +2.25,3) rectangle (\x+3, 3.75);         
         
          \draw [fill=red!55, line width=0.3mm] (\x +2.25,1.5) rectangle (\x+3, 2.25);
            
             \draw [fill=red!55, line width=0.3mm] (\x+2.25,2.25) rectangle (\x+3, 3);
            \draw [fill=red!55, line width=0.3mm] (\x+2.25,0.75) rectangle (\x+3, 1.5);
        }
    
    
    \draw [fill=red!55, line width=0.3mm] (3,0) rectangle (3.75, 0.75);

    

\end{scope}

\begin{scope}[shift={(17.5,-10.75)}]
\foreach \x in {0, 0.75, 1.5, 2.25} {
            \draw [fill=blue!55, line width=0.3mm] (\x,0) rectangle (\x+0.75, 0.75);

    }
    
    \foreach \x in {0, 0.75, 1.5} { 
            \draw [fill=blue!55, line width=0.3mm] (\x,0.75) rectangle (\x+0.75, 1.5);

        }
        
         \foreach \x in {0, 0.75} { 
         \draw [fill=green!55, line width=0.3mm] (\x +2.25,3) rectangle (\x+3, 3.75);
         
  \draw [fill=red!55, line width=0.3mm] (\x +2.25,1.5) rectangle (\x+3, 2.25);
            \draw [fill=red!55, line width=0.3mm] (\x+2.25,2.25) rectangle (\x+3, 3);         
         
            \draw [fill=red!55, line width=0.3mm] (\x+2.25,0.75) rectangle (\x+3, 1.5);
        }

    \draw [fill=red!55, line width=0.3mm] (3,0) rectangle (3.75, 0.75);

\end{scope}

\begin{scope}[shift={(17.5,-17.25)}]
\foreach \x in {0, 0.75, 1.5, 2.25} {
            \draw [fill=blue!55, line width=0.3mm] (\x,0) rectangle (\x+0.75, 0.75);

    }
    
    \foreach \x in {0, 0.75, 1.5} { 
            \draw [fill=blue!55, line width=0.3mm] (\x,0.75) rectangle (\x+0.75, 1.5);

        }
        
         \foreach \x in {0, 0.75} { 
            
\draw [fill=green!55, line width=0.3mm] (\x+2.25,3) rectangle (\x+3, 3.75);            
            
            \draw [fill=red!55, line width=0.3mm] (\x+2.25,0.75) rectangle (\x+3, 1.5);
            
              \draw [fill=red!55, line width=0.3mm] (\x +2.25,1.5) rectangle (\x+3, 2.25);
            \draw [fill=red!55, line width=0.3mm] (\x+2.25,2.25) rectangle (\x+3, 3);
        }

    \draw [fill=red!55, line width=0.3mm] (3,0) rectangle (3.75, 0.75);

     \foreach \x in {0, 0.75, 1.5, 2.25} { 
     \foreach \y in {0, -0.75, -1.5, -2.25}{
      \draw [fill=pink!40, line width=0.3mm] (\x,\y+3) rectangle (\x+0.75,\y+3.75);
      }
}

\begin{scope}[shift={(0,6.5)}]
\foreach \x in {0, 0.75, 1.5, 2.25} { 
     \foreach \y in {0, -0.75, -1.5, -2.25}{
      \draw [fill=pink!40, line width=0.3mm] (\x,\y+3) rectangle (\x+0.75,\y+3.75);
      }
}
\end{scope}

\begin{scope}[shift={(0,13)}]
\foreach \x in {0, 0.75, 1.5, 2.25} { 
     \foreach \y in {0, -0.75, -1.5, -2.25}{
      \draw [fill=pink!40, line width=0.3mm] (\x,\y+3) rectangle (\x+0.75,\y+3.75);
      }
}
\end{scope}

\end{scope}

\begin{scope}[shift={(17.5,-23.75)}]
\foreach \x in {0, 0.75, 1.5, 2.25} {
            \draw [fill=blue!55, line width=0.3mm] (\x,0) rectangle (\x+0.75, 0.75);

    }

         \foreach \x in {0.75} { 
             \draw [fill=green!55, line width=0.3mm] (\x+2.25,3) rectangle (\x+3, 3.75); 
             
            \draw [fill=red!55, line width=0.3mm] (\x+2.25,0.75) rectangle (\x+3, 1.5);
            
              \draw [fill=red!55, line width=0.3mm] (\x +2.25,1.5) rectangle (\x+3, 2.25);
            \draw [fill=red!55, line width=0.3mm] (\x+2.25,2.25) rectangle (\x+3, 3);
        }

    \draw [fill=red!55, line width=0.3mm] (3,0) rectangle (3.75, 0.75);

     \foreach \x in {0, 0.75, 1.5, 2.25} { 
     \foreach \y in {0, -0.75, -1.5, -2.25}{
      \draw [fill=orange!55, line width=0.3mm] (\x,\y+3) rectangle (\x+0.75,\y+3.75);
      }
}

\end{scope}

\end{scope}


\begin{scope}[shift={(21,0)}]


\begin{scope}[shift={(17.5,-4.25)}]
\foreach \x in {0, 0.75, 1.5, 2.25} {
            \draw [fill=blue!55, line width=0.3mm] (\x,0) rectangle (\x+0.75, 0.75);

   }
    
    \foreach \x in {0, 0.75, 1.5} { 

        }

\draw [fill=green!55, line width=0.3mm] (3,3) rectangle (3.75, 3.75);        
        
         \draw [fill=red!55, line width=0.3mm] (3,1.5) rectangle (3.75, 2.25);
            
             \draw [fill=red!55, line width=0.3mm] (3,2.25) rectangle (3.75, 3);
            
            \draw [fill=red!55, line width=0.3mm] (3,0.75) rectangle (3.75, 1.5);
 
    \draw [fill=red!55, line width=0.3mm] (3,0) rectangle (3.75, 0.75);

    

\end{scope}

\begin{scope}[shift={(17.5,-10.75)}]
\foreach \x in {0, 0.75, 1.5, 2.25} {
            \draw [fill=blue!55, line width=0.3mm] (\x,0) rectangle (\x+0.75, 0.75);

    }
    
   \draw [fill=green!55, line width=0.3mm] (3,3) rectangle (3.75, 3.75);
        
      \draw [fill=red!55, line width=0.3mm] (3,1.5) rectangle (3.75, 2.25);
            \draw [fill=red!55, line width=0.3mm] (3,2.25) rectangle (3.75, 3);

            \draw [fill=red!55, line width=0.3mm] (3,0.75) rectangle (3.75, 1.5);

    \draw [fill=red!55, line width=0.3mm] (3,0) rectangle (3.75, 0.75);

\end{scope}

\begin{scope}[shift={(17.5,-17.25)}]
\foreach \x in {0, 0.75, 1.5, 2.25} {
            \draw [fill=blue!55, line width=0.3mm] (\x,0) rectangle (\x+0.75, 0.75);

    }
    
    \draw [fill=green!55, line width=0.3mm] (3,3) rectangle (3.75, 3.75);
  
       \draw [fill=red!55, line width=0.3mm] (3,0.75) rectangle (3.75, 1.5);
            
              \draw [fill=red!55, line width=0.3mm] (3,1.5) rectangle (3.75, 2.25);
            \draw [fill=red!55, line width=0.3mm] (3,2.25) rectangle (3.75, 3);

    \draw [fill=red!55, line width=0.3mm] (3,0) rectangle (3.75, 0.75);

\end{scope}

\begin{scope}[shift={(17.5,-23.75)}]
\foreach \x in {0, 0.75, 1.5, 2.25} {
            \draw [fill=blue!55, line width=0.3mm] (\x,0) rectangle (\x+0.75, 0.75);

    }

         \foreach \x in {0.75} { 
            
\draw [fill=green!55, line width=0.3mm] (\x+2.25,3) rectangle (\x+3, 3.75);            
            
            \draw [fill=red!55, line width=0.3mm] (\x+2.25,0.75) rectangle (\x+3, 1.5);
            
              \draw [fill=red!55, line width=0.3mm] (\x +2.25,1.5) rectangle (\x+3, 2.25);
            \draw [fill=red!55, line width=0.3mm] (\x+2.25,2.25) rectangle (\x+3, 3);
        }

    \draw [fill=red!55, line width=0.3mm] (3,0) rectangle (3.75, 0.75);

     \foreach \x in {0, 0.75, 1.5, 2.25} { 
     \foreach \y in {0, -0.75, -1.5, -2.25}{
      \draw [fill=orange!55, line width=0.3mm] (\x,\y+3) rectangle (\x+0.75,\y+3.75);
      }
}


\begin{scope}[shift={(0,6.5)}]
 \foreach \x in {0, 0.75, 1.5, 2.25} { 
     \foreach \y in {0, -0.75, -1.5, -2.25}{
      \draw [fill=green!55, line width=0.3mm] (\x,\y+3) rectangle (\x+0.75,\y+3.75);
      }
}
\end{scope}

\begin{scope}[shift={(0,13)}]
 \foreach \x in {0, 0.75, 1.5, 2.25} { 
     \foreach \y in {0, -0.75, -1.5, -2.25}{
      \draw [fill=green!55, line width=0.3mm] (\x,\y+3) rectangle (\x+0.75,\y+3.75);
      }
}
\end{scope}

\begin{scope}[shift={(0,19.5)}]
 \foreach \x in {0, 0.75, 1.5, 2.25} { 
     \foreach \y in {0, -0.75, -1.5, -2.25}{
      \draw [fill=green!55, line width=0.3mm] (\x,\y+3) rectangle (\x+0.75,\y+3.75);
      }
}
\end{scope}

\end{scope}

\end{scope}


 \foreach \x in {0} { 
  \foreach \y in {0} { 
\begin{scope}[shift={(17.5+\x, -4.5-\y)}] 
   
\draw[<->] (0,0) -- (3.75,0) node[midway, below, yshift=0.5mm] {$n+1$};
\draw[<->] (4,0.2) -- (4,3.95) node[midway, right, xshift=-0.5mm] {$n+1$};

\end{scope}

}
}

\draw[<->] (17,-25) -- (42.5,-25) node[midway, below, yshift=-2mm] {\huge $n$};

\draw[<->] (43.7,-24) -- (43.7,-0.5) node[midway, right, xshift=1.7mm] {\huge $n$};

\end{tikzpicture}
\vskip .8cm
\captionof{figure}{$n$-th section with 5D pyramids}
\label{n with 5D}
\par
\vspace{5mm}
\noindent
\begin{tikzpicture}[scale=0.47]

\node at (15,-11) {\hskip 1.5cm \huge $1\cdot 1^2 + 3\cdot 2^2+ 5\cdot 3^2+\cdots +(2n-1)n^2$};

\begin{scope}[shift={(6,0)}] 

\foreach \p in {0, 5, 10, 15} {
\foreach \q in {0, 5, 10, 15} {
\begin{scope}[shift={(\p, \q-33)}]
 \foreach \x in {0, 0.75, 1.5, 2.25} { 
     \foreach \y in {0, -0.75, -1.5, -2.25}{
      \draw [fill=pink!40, line width=0.3mm] (\x,\y+3) rectangle (\x+0.75,\y+3.75);
      }
}
\end{scope}

}
}


\begin{scope}[shift={(13, -18)}] 
   
\draw[<->] (2,0.4) -- (5,0.4) node[midway, below, yshift=0.5mm] {$n$};
\draw[<->] (5.4,1-0.25) -- (5.4,4-0.25) node[midway, right, xshift=-0.5mm] {$n$};

\end{scope}

\begin{scope}[shift={(-15, -8)}] 

\draw[<->] (15,-25.3) -- (33,-25.3) node[midway, below, yshift=-2mm] {\Huge $n$};

\draw[<->] (34.2,-24.3) -- (34.2,-6.3) node[midway, right, xshift=1.7mm] {\Huge $n$};


\fill[white] (14.5,-5.75) rectangle (28.5,-19.75);

\end{scope}

\begin{scope}[shift={(0.7, -14.7)}] 
\foreach \p in {0, 5, 10} {
\foreach \q in {0, 5, 10} {
\begin{scope}[shift={(\p, \q-14)}]
 \foreach \x in {0, 0.75, 1.5} { 
     \foreach \y in {0, -0.75, -1.5}{
      \draw [fill=pink!40, line width=0.3mm] (\x,\y+3) rectangle (\x+0.75,\y+3.75);
      }
}
\end{scope}

}
}
\end{scope}

\fill[white] (0,-13) rectangle (10,-23);

\begin{scope}[shift={(1.5, -20.4)}] 
\foreach \p in {0, 5} {
\foreach \q in {0, 5} {
\begin{scope}[shift={(\p, \q-4)}]
 \foreach \x in {0, 0.75} { 
     \foreach \y in {0, -0.75}{
      \draw [fill=pink!40, line width=0.3mm] (\x,\y+3) rectangle (\x+0.75,\y+3.75);
      }
}
\end{scope}

\fill[white] (-0.2,3) rectangle (1.8, 5) ;
}
}
\end{scope}

\draw [fill=pink!40, line width=0.3mm] (0+2.2,3-20.2) rectangle (0.75+2.2,3.75-20.2);

\end{scope}
\end{tikzpicture}
\vskip .8cm
\captionof{figure}{Resulting excess from assembling five 5D pyramids}
\label{Convolution}

\restoreparindent As a result, we obtain
$n$ sections, each formed by an
$n\times n$ array of squares of side length 
$(n+1)$ and one final section corresponding to what remains of the last 5D pyramid after subtracting the squares used to fill the gaps, as shown in Figure~\ref{Convolution}. Since in total we have five 5D pyramids, that is, 
$5(1^4+2^4+ 3^4+\cdots + n^4)$, we have thus given a visual proof of the following identity
\begin{equation}\label{Archimedes generalisation}
\begin{aligned}
&5(1^4+2^4+ 3^4+\cdots + n^4) = \\
&n^3(n+1)^2 + \Bigl( 1\cdot 1^2 + 3 \cdot 2^2+ 5 \cdot 3^2 +\cdots +  (2n-1)n^2\Bigr).
\end{aligned}
\end{equation}

\subsection{Step 2: Starting the DIY. The Factors $n(n+1)$}

We might think that most of the work is already done, since we now have 
$n$ sections, each section given by an $n\times n$ array of identical rectangles with dimensions 
$(n+1)\times (n+1)$, which looks very convenient. However, we still have a final layer in our construction where we have 
$1\cdot 1^2 + 3 \cdot 2^2+ 5 \cdot 3^2 +\cdots +  (2n-1)n^2$ unit squares and, even worse, we have no clue where the factor $(n^2+n-\frac{1}{3})$ might appear. The adjustment we will perform in this section may look naive, but it is important to complete our 2D puzzle.

It is based on the trivial identity
$$
n^2(n+1)^2 = n(n+1)\,n(n+1).
$$

Although this may seem obvious algebraically, it is visually significant. What we want to achieve is to transform layers with $n\times n$ rectangles of dimensions
$(n+1)\times (n+1)$ into layers with $n\times (n+1)$ rectangles of dimensions $n\times (n+1)$.

As indicated in Figure~\ref{Factors n(n+1)} (where we have removed the colors, since the first
$n$ layers are made up of identical rectangles), this can be done by removing one row of unit squares from each rectangle to form a new row of rectangles.\par
\vspace{5mm}
\noindent
\begin{tikzpicture}[scale=0.40]

\foreach \s in {6, 12, 18, 24}{
\foreach \t in {0, 7, 14, 21} {
\begin{scope}[shift={(\t,\s)}]
\foreach \z in {0, 0.75, 1.5, 2.25, 3} {
 \draw [fill=white!20, dash pattern=on 1pt off 1pt, line width=0.3mm] (\z,-0.75) rectangle (\z+0.75, 0);
}
\end{scope}
}
}

\foreach \p in {0, 7, 14, 21}{ 
\foreach \q in {0, 6, 12, 18, 24}{
\begin{scope}[shift={(\p,\q)}]

\foreach \x in {0, 0.75, 1.5, 2.25, 3} {
\foreach \y in {0, 0.75, 1.5, 2.25}{
            \draw [fill=gray!20, line width=0.3mm] (\x,\y) rectangle (\x+0.75,\y + 0.75);
}
}
\end{scope}
}
}

\draw [dash pattern=on 1pt off 1pt, ultra thick] (-1,-1) rectangle (25.7,4);

\draw[<->] (0,-1.7) -- (25,-1.7) node[midway, below, yshift=-2mm] {\Large $n$};

\draw[<->] (26.3,0) -- (26.3,27) node[midway, right, xshift=1.7mm] {\Large $n+1$};


\begin{scope}[shift={(19, 25)}] 
   
\draw[<->] (2,2.5) -- (5.75,2.5) node[midway, above, yshift=0.5mm] {$n+1$};
\draw[<->] (6.3,-1) -- (6.3,2) node[midway, right, xshift=-0.5mm] {$n$};

\end{scope}
\end{tikzpicture}
\vskip .8cm

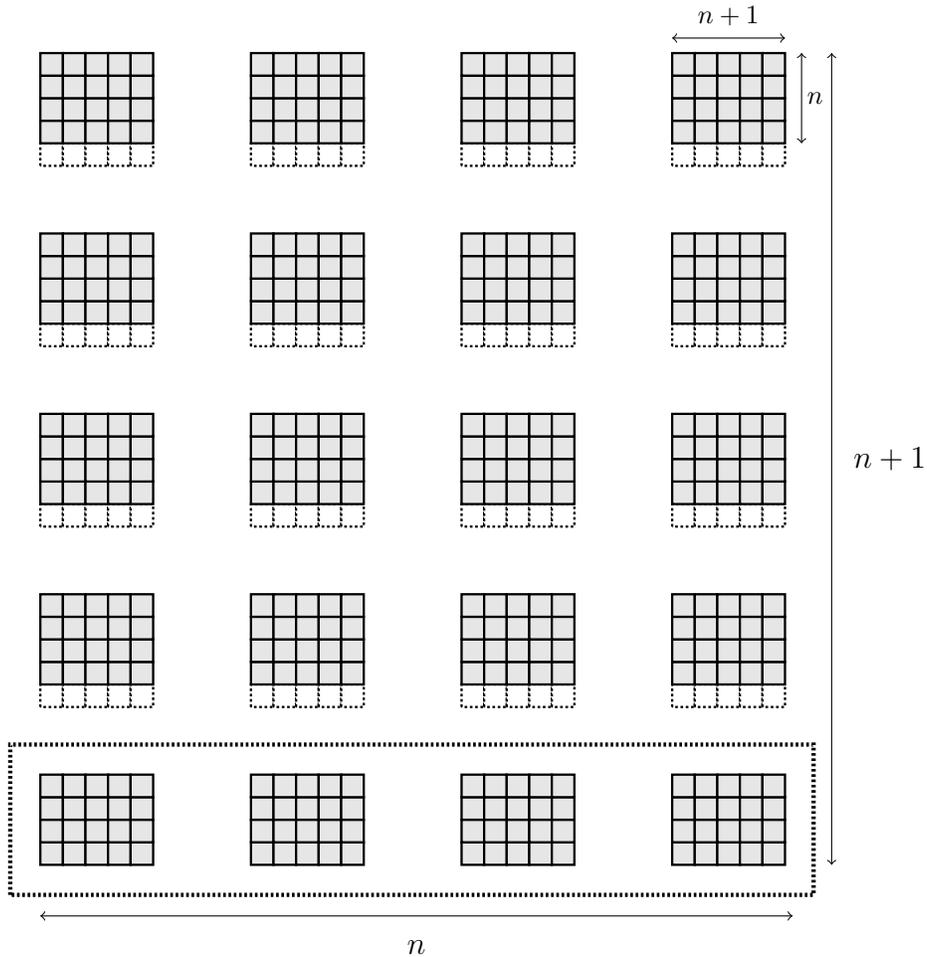
\captionof{figure}{Visual explanation of the factors $n(n+1)$}
\label{Factors n(n+1)}

The reason for this modification is that we want the factors 
$n(n+1)$ to come from the number of rectangles in each layer, so that the mysterious factor $n^2 + n - \frac{1}{3}$ arises from the adjustments we perform on each individual rectangle, as we will see in the next step.

\subsection{Step 3: Taking Out the Scissors. The Irreducible Factor $(n^2 + n -\frac{1}{3})$}

The issue posed by the factor $n^2 + n - \frac{1}{3}$ is that it is
irreducible over the field of rational numbers, although it factors over
the field of real numbers. In fact, we can write this factor as

$$n^2 + n - \frac{1}{3} = (n - x_1)(n - x_2),$$
where $x_1 = -\frac{1}{2} + \sqrt{\frac{7}{12}}$ and
$x_2 = -\frac{1}{2} - \sqrt{\frac{7}{12}}$ are its real roots. Thus,
$x_2 = -1 - x_1$, and by setting $x = x_1$ we can write
\begin{equation} \label{factorization}
n^2 + n - \frac{1}{3} = (n - x)(n + 1 + x).
\end{equation}

\restoreparindent This tells us exactly what kind of DIY operation we need to perform on each rectangle with dimensions $(n+1)\times n$ in the first $n$ layers of our 2D puzzle. Specifically, we cut a thin strip of width $x$ along the length of each rectangle, and in the last rectangle we make an additional vertical cut at a distance $x$ from the edge. These pieces can then be rearranged vertically at the right end of each rectangle, forming a new rectangle with dimensions $(n+1+x)\times (n-x)$.
\vskip .2cm
\begin{tikzpicture}[scale=0.9]
 \draw[line width=0mm] (0,0) -- (0,0);
\begin{scope}[shift={(3,0)}]

\foreach \x in {0, 0.75, 1.5, 2.25, 3} {
\foreach \y in {0, 0.75, 1.5, 2.25}{
            \draw [fill=gray!20, line width=0.3mm] (\x,\y) rectangle (\x+0.75,\y + 0.75);
}
}
\end{scope}


\begin{scope}[shift={(0, 0)}] 
   
\draw[<->] (3,-0.2) -- (6.75,-0.2) node[midway, below] {\Large $n+1$};
\draw[<->] (2.8,0) -- (2.8,3) node[midway, left] {\Large $n$};
\draw[<->] (7,2.75) -- (7,3) node[midway, right] {\Large $x$};
\draw[<->] (6.5, 3.2) -- (6.75,3.2) node[midway, above] {\Large $x$};

\end{scope}

\draw[dashed] (2.75, 2.75) -- (7,2.75);
\draw[dashed] (6.5, 2.75) -- (6.5,3.25);
\draw[dashed] (0,-1) -- (11,-1);
\end{tikzpicture}
\vskip .3cm
\begin{tikzpicture}[scale=0.9]
 \draw[line width=0mm] (0,0) -- (0,0);
\begin{scope}[shift={(3,0)}]

\foreach \x in {0, 0.75, 1.5, 2.25, 3} {
\foreach \y in {0, 0.75, 1.5, 2.25}{
            \draw [fill=gray!20, line width=0.3mm] (\x,\y) rectangle (\x+0.75,\y + 0.75);
}
}

\foreach \x in {3.75} {
\foreach \y in {0, 0.75, 1.5, 2.25}{
            \draw [fill=gray!20, line width=0.3mm] (\x,\y) rectangle (\x+0.25,\y + 0.75);
}
}

\foreach \x in {0, 0.75, 1.5, 2.25, 3, 3.75} {
\foreach \y in {3}{
            \draw [draw=none, fill=white] (\x-0.2,\y+0.2) rectangle (\x+0.8,\y -0.25);
}
}


 \draw [fill=gray!20, line width=0.3mm] (5,2.75) rectangle (5.75,3);

 \draw [fill=gray!20, line width=0.3mm] (6,2.75) rectangle (6.25,3);
 
\end{scope}


\begin{scope}[shift={(0, 0)}] 
   
\draw[<->] (3,-0.2) -- (7,-0.2) node[midway, below] {\Large $n+1+x$};
\draw[<->] (2.8,0) -- (2.8,2.75) node[midway, left] {\Large $n-x$};

\end{scope}

\draw [line width=0.3mm](3, 2.75) -- (7,2.75);

\end{tikzpicture}
\vskip .2cm
\captionof{figure}{Scissor DIY explanation of the factor $(n^2 + n -\frac{1}{3})$.}
\label{Scissors}

\restoreparindent In this process, two extra pieces are left over, which we place in the last layer that was still to be completed (see Figure~\ref{Convolution}).

\restoreparindent Note that the operation carried out in this section can be performed for any $0 \le x \le 1$. 
However, the reason why $x$ must be chosen to be exactly the positive root of the polynomial 
$n^2 + n - \tfrac{1}{3}$ will become clear in the next section.

\subsection{Step 4: Completing the Top Layer. The Factor $(n+\tfrac{1}{2})$}

The explanation of the factor $(n+\tfrac{1}{2})$ is similar to the DIY operation performed in Diagram~\ref{DIY} for the sum of squares. In other words, the top layer of the block we want to construct must have an area equal to half of the area of the first $n$ layers.

\restoreparindent However, unlike the operation carried out in Diagram~\ref{DIY}, in our situation an adjustment has already been performed in the first $n$ layers. Indeed, in each rectangle of area $n(n+1)$ in Figure~\ref{Factors n(n+1)}, a leftover of area $\mathcal{R} = x^2 + x$ has been removed (see Figure~\ref{Leftover}). 

\vskip .5cm

\begin{tikzpicture}[scale=2]
\draw[line width=0mm] (0,0) -- (0,0); 

\begin{scope}[shift={(-2.75, -3)}] 

\draw [fill=gray!20, line width=0.3mm] (5,2.75) rectangle (5.75,3);

 \draw [fill=gray!20, line width=0.3mm] (6.5,2.75) rectangle (6.75,3);


\draw[<->] (6.5,2.66) -- (6.75,2.66) node[midway, below ] {\Large $x$};

\draw[<->] (6.4,2.75) -- (6.4,3) node[midway, left] {\Large $x$};

\draw[<->] (5,2.66) -- (5.75,2.66) node[midway, below ] {\Large $1$};

\draw[<->] (4.9,2.75) -- (4.9,3) node[midway, left] {\Large $x$};
 
\end{scope}
\end{tikzpicture}
\captionof{figure}{Leftover pieces from each rectangle.}
\label{Leftover}

\restoreparindent  As a result, each of the first $n$ layers has area
\[
n^2(n+1)^2 - \mathcal{R}\, n(n+1).
\]

\restoreparindent We must now check that this area coincides with the result of adding the leftovers from the first $n$ layers, that is, $\mathcal{R}\, n^2(n+1)$, to the last layer (Figure~\ref{Convolution}) and then doubling this quantity.

\restoreparindent In order to verify this equality in a visual way, we need a geometric argument that can be regarded as a kind of commutativity for figurate numbers. 

\begin{tikzpicture}[scale=0.45]

 \draw (0,0) -- (0,0);

\begin{scope}[shift={(3.5,14)}] 

\foreach \p in {0, 5, 10, 15} {
\foreach \q in {0, 5, 10, 15} {
\begin{scope}[shift={(\p, \q-33)}]
 \foreach \x in {0, 0.75, 1.5, 2.25} { 
     \foreach \y in {0, -0.75, -1.5, -2.25}{
      \draw [fill=pink!40, line width=0.3mm] (\x,\y+3) rectangle (\x+0.75,\y+3.75);
      }
}
\end{scope}

}
}


\begin{scope}[shift={(13, -18)}] 
   
\draw[<->] (2,0.4) -- (5,0.4) node[midway, below, yshift=0.5mm] {$n$};
\draw[<->] (5.4,1-0.25) -- (5.4,4-0.25) node[midway, right, xshift=-0.5mm] {$n$};

\end{scope}

\begin{scope}[shift={(-15, -8)}] 

\draw[<->] (15,-25.3) -- (33,-25.3) node[midway, below, yshift=-2mm] {\Huge $n$};

\draw[<->] (34.2,-24.3) -- (34.2,-6.3) node[midway, right, xshift=1.7mm] {\Huge $n$};


\fill[white] (14.5,-5.75) rectangle (28.5,-19.75);

\end{scope}

\fill[white] (0,-13) rectangle (10,-23);

\end{scope}

\node at (8,-8) {\hskip 1.5cm \huge $n^2(2n-1)$};
\end{tikzpicture}
\vskip .3cm
\captionof{figure}{A corner-shaped arrangement formed by squares}
\label{Odd Square}
\restoreparindent Suppose we start with an arrangement of a given shape (of size $n$) and place, in each “entry” of this shape, a figurate pattern of another type but with the same size $n$. The total number of unit squares is the same as if we had started with an arrangement of the second shape and, in each of its entries, placed the figurate pattern of the first shape.

\restoreparindent For example, the number of unit squares in a corner-shaped arrangement whose entries are squares (see Figure~\ref{Odd Square}) is the same as in a square arrangement whose entries are corner-shaped (see Figure~\ref{Square Odd}). In both cases, the total number of unit squares is $n^2(2n-1)$.

\begin{tikzpicture}[scale=0.45]

 \draw (0,0) -- (0,0);

\node at (11,0) {\hskip 1.5cm \huge $(2n-1)n^2$};

\begin{scope}[shift={(3,12)}] 

\foreach \p in {0, 5, 10, 15} {
\foreach \q in {0, 5, 10, 15} {
\begin{scope}[shift={(\p, \q-33)}]
 \foreach \x in {2.25} { 
     \foreach \y in {0, -0.75, -1.5, -2.25}{
      \draw [fill=pink!40, line width=0.3mm] (\x,\y+3) rectangle (\x+0.75,\y+3.75);

      }
}

  \foreach \x in {0, 0.75, 1.5, 2.25} { 
     \foreach \y in {-2.25}{
      \draw [fill=pink!40, line width=0.3mm] (\x,\y+3) rectangle (\x+0.75,\y+3.75);
      }
      }
\end{scope}

}
}


\begin{scope}[shift={(-2, -18)}] 
   
\draw[<->] (2,0.4) -- (5,0.4) node[midway, below, yshift=0.5mm] {$n$};
\draw[<->] (5.4,1-0.25) -- (5.4,4-0.25) node[midway, right, xshift=-0.5mm] {$n$};

\end{scope}

\begin{scope}[shift={(-15, -8)}] 

\draw[<->] (15,-25.3) -- (33,-25.3) node[midway, below, yshift=-2mm] {\Huge $n$};

\draw[<->] (34.2,-24.3) -- (34.2,-6.3) node[midway, right, xshift=1.7mm] {\Huge $n$};


\end{scope}

\end{scope}
\end{tikzpicture}
\vskip .3cm
\captionof{figure}{A square arrangement formed by corner-shaped figures}
\label{Square Odd}

\restoreparindent Thus, the final layer, consisting of 
$1\cdot 1^2 + 3\cdot 2^2+5\cdot 3^2+\cdots +(2n-1)n^2$ unit squares, can be organized into square arrangements made of corner-shaped figures, as illustrated in Figure \ref{Top Dual}.

\vskip .3cm
\begin{tikzpicture}[scale=0.47]

\draw [fill=pink!300, line width=0.3mm] (0,3) rectangle (0.75,3.75);

\foreach \p in {0, 2} {
\foreach \q in {0, 2} {
\begin{scope}[shift={(\p, \q-4)}]
 \foreach \x in {0.75} { 
     \foreach \y in {0, -0.75}{
      \draw [fill=pink!200, line width=0.3mm] (\x,\y+3) rectangle (\x+0.75,\y+3.75);
      }
}

\foreach \x in {0, 0.75} { 
     \foreach \y in {-0.75}{
      \draw [fill=pink!200, line width=0.3mm] (\x,\y+3) rectangle (\x+0.75,\y+3.75);
      }
}
\end{scope}

}
}

\foreach \p in {0, 3.5, 7} {
\foreach \q in {0, 3.5, 7} {
\begin{scope}[shift={(\p, \q-14)}]
 \foreach \x in {1.5} { 
     \foreach \y in {0, -0.75, -1.5}{
      \draw [fill=pink!120, line width=0.3mm] (\x,\y+3) rectangle (\x+0.75,\y+3.75);
      }
}

\foreach \x in {0, 0.75, 1.5} { 
     \foreach \y in {-1.5}{
      \draw [fill=pink!120, line width=0.3mm] (\x,\y+3) rectangle (\x+0.75,\y+3.75);
      }
}
\end{scope}

}
}

\foreach \p in {0, 5, 10, 15} {
\foreach \q in {0, 5, 10, 15} {
\begin{scope}[shift={(\p, \q-33)}]
 \foreach \x in { 2.25} { 
     \foreach \y in {0, -0.75, -1.5, -2.25}{
      \draw [fill=pink!40, line width=0.3mm] (\x,\y+3) rectangle (\x+0.75,\y+3.75);
      }
}

 \foreach \x in {0, 0.75, 1.5, 2.25} { 
     \foreach \y in {-2.25}{
      \draw [fill=pink!40, line width=0.3mm] (\x,\y+3) rectangle (\x+0.75,\y+3.75);
      }
}

\end{scope}

}
}

\node at (15.5,0) {\LARGE $1\cdot 1^2 +3\cdot 2^2+5\cdot 3^2+\cdots +(2n-1)n^2$};


\begin{scope}[shift={(-2, -18)}] 
   
\draw[<->] (2,0.4) -- (5,0.4) node[midway, below, yshift=0.5mm] {$n$};
\draw[<->] (5.4,1-0.25) -- (5.4,4-0.25) node[midway, right, xshift=-0.5mm] {$n$};

\end{scope}

\begin{scope}[shift={(-15, -8)}] 

\draw[<->] (15,-25.3) -- (33,-25.3) node[midway, below, yshift=-2mm] {\Huge $n$};

\draw[<->] (34.2,-24.3) -- (34.2,-6.3) node[midway, right, xshift=1.7mm] {\Huge $n$};



\end{scope}
\end{tikzpicture}

\vskip .8cm

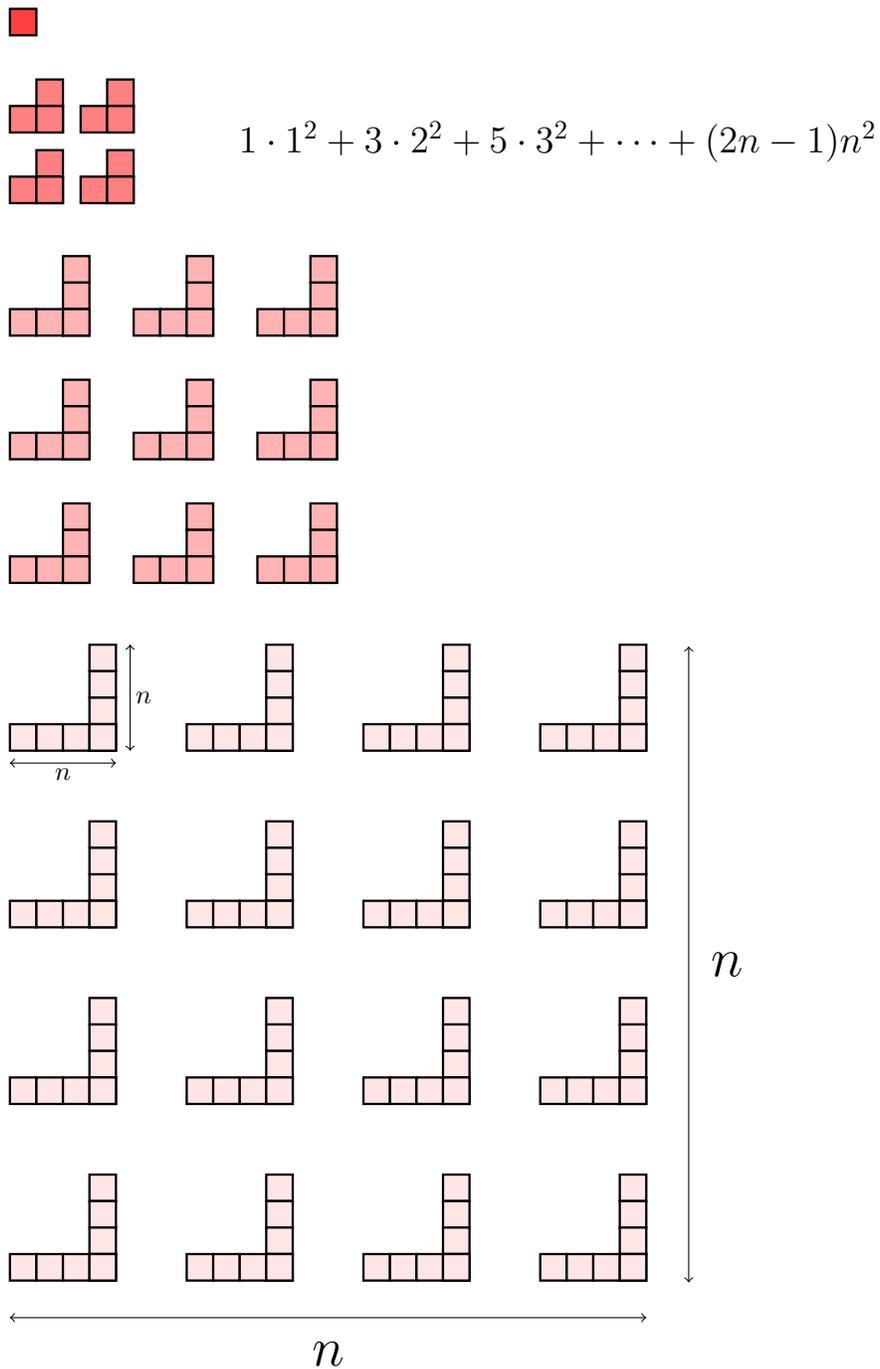
\captionof{figure}{Final layer in the dual square–corner representation}
\label{Top Dual}

\restoreparindent Using the fact that the sum of consecutive odd numbers is a square (see Figure \ref{Odd numbers}), we can represent the dual square–corner decomposition of the final layer in a more condensed form, as shown in Figure \ref{Top Dual condensed}.

\vskip .3cm
\begin{tikzpicture}[scale=0.47]
 \draw (0,0) -- (0,0);
\begin{scope}[shift={(2.5, 0)}] 
\draw [fill=pink!300, line width=0.3mm] (0,3) rectangle (0.75,3.75);

\foreach \p in {0, 5} {
\foreach \q in {0, 5} {
\begin{scope}[shift={(\p, \q-5)}]
 \foreach \x in {0.75} { 
     \foreach \y in {0, -0.75}{
      \draw [fill=pink!200, line width=0.3mm] (\x,\y+3) rectangle (\x+0.75,\y+3.75);
      }
}

\foreach \x in {0, 0.75} { 
     \foreach \y in {-0.75}{
      \draw [fill=pink!200, line width=0.3mm] (\x,\y+3) rectangle (\x+0.75,\y+3.75);
      }
}
\end{scope}

}
}

\foreach \p in {0, 5, 10} {
\foreach \q in {0, 5, 10} {
\begin{scope}[shift={(\p, \q-10)}]
 \foreach \x in {1.5} { 
     \foreach \y in {0, -0.75, -1.5}{
      \draw [fill=pink!120, line width=0.3mm] (\x,\y+3) rectangle (\x+0.75,\y+3.75);
      }
}

\foreach \x in {0, 0.75, 1.5} { 
     \foreach \y in {-1.5}{
      \draw [fill=pink!120, line width=0.3mm] (\x,\y+3) rectangle (\x+0.75,\y+3.75);
      }
}
\end{scope}

}
}

\foreach \p in {0, 5, 10, 15} {
\foreach \q in {0, 5, 10, 15} {
\begin{scope}[shift={(\p, \q-15)}]
 \foreach \x in { 2.25} { 
     \foreach \y in {0, -0.75, -1.5, -2.25}{
      \draw [fill=pink!40, line width=0.3mm] (\x,\y+3) rectangle (\x+0.75,\y+3.75);
      }
}

 \foreach \x in {0, 0.75, 1.5, 2.25} { 
     \foreach \y in {-2.25}{
      \draw [fill=pink!40, line width=0.3mm] (\x,\y+3) rectangle (\x+0.75,\y+3.75);
      }
}

\end{scope}

}
}

\node at (9.75,6) {\huge $1\cdot 1^2 +3\cdot 2^2+5\cdot 3^2+\cdots +(2n-1)n^2$};


\begin{scope}[shift={(-2, 0)}] 
   
\draw[<->] (2,0.4) -- (5,0.4) node[midway, below, yshift=0.5mm] {$n$};
\draw[<->] (5.4,1-0.25) -- (5.4,4-0.25) node[midway, right, xshift=-0.5mm] {$n$};

\end{scope}

\begin{scope}[shift={(-15, 10)}] 

\draw[<->] (15,-25.3) -- (33,-25.3) node[midway, below, yshift=-2mm] {\Huge $n$};

\draw[<->] (34.2,-24.3) -- (34.2,-6.3) node[midway, right, xshift=1.7mm] {\Huge $n$};


\draw [dashed, ultra thick] (19.5,-10.75) rectangle (33.5,-24.75);

\end{scope}

\end{scope}
\end{tikzpicture}

\vskip .8cm

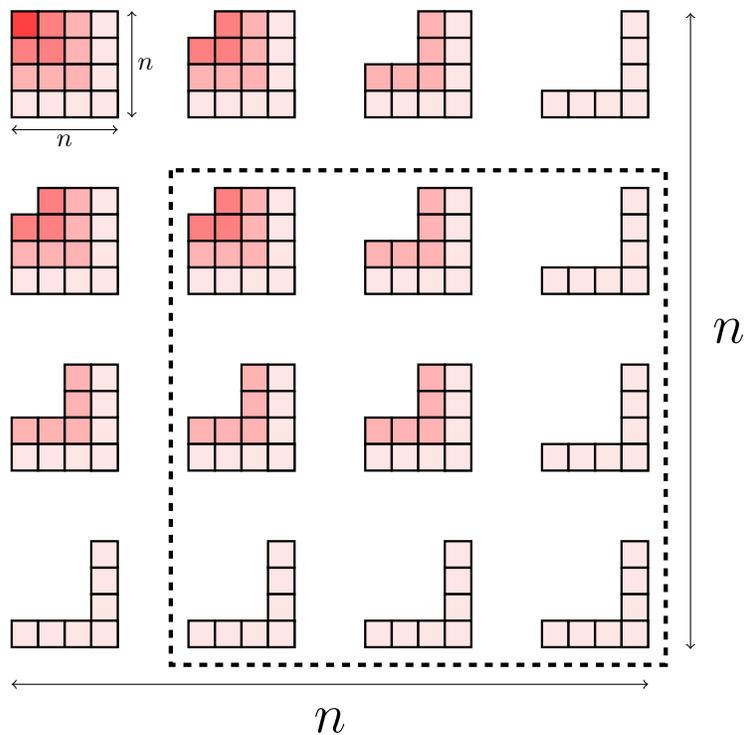
\captionof{figure}{Condensed dual form of the top layer}
\label{Top Dual condensed}

\restoreparindent In the previous diagram, a square has been highlighted with a dashed outline. Since the final layer must be duplicated, we overlap this configuration with the same layer in its original corner–square form, shown again in Figure \ref{Convolution 2}. The dashed square marks the region to be overlapped and helps visualize the argument.

\vskip .3cm
\begin{tikzpicture}[scale=0.47]

\node at (15,-11) {\hskip 1cm \huge $1\cdot 1^2 + 3\cdot 2^2+ 5\cdot 3^2+\cdots +(2n-1)n^2$};

\begin{scope}[shift={(6,0)}] 

\foreach \p in {0, 5, 10, 15} {
\foreach \q in {0, 5, 10, 15} {
\begin{scope}[shift={(\p, \q-33)}]
 \foreach \x in {0, 0.75, 1.5, 2.25} { 
     \foreach \y in {0, -0.75, -1.5, -2.25}{
      \draw [fill=gray!40, line width=0.3mm] (\x,\y+3) rectangle (\x+0.75,\y+3.75);
      }
}
\end{scope}

}
}


\begin{scope}[shift={(13, -18)}] 
   
\draw[<->] (2,0.4) -- (5,0.4) node[midway, below, yshift=0.5mm] {$n$};
\draw[<->] (5.4,1-0.25) -- (5.4,4-0.25) node[midway, right, xshift=-0.5mm] {$n$};

\end{scope}

\begin{scope}[shift={(-15, -8)}] 

\draw[<->] (15,-25.3) -- (33,-25.3) node[midway, below, yshift=-2mm] {\Huge $n$};

\draw[<->] (34.2,-24.3) -- (34.2,-6.3) node[midway, right, xshift=1.7mm] {\Huge $n$};


\fill[white] (14,-5.75) rectangle (28.5,-19.75);

\end{scope}

\begin{scope}[shift={(0, -14)}] 
\foreach \p in {0, 5, 10} {
\foreach \q in {0, 5, 10} {
\begin{scope}[shift={(\p, \q-14)}]
 \foreach \x in {0, 0.75, 1.5} { 
     \foreach \y in {0, -0.75, -1.5}{
      \draw [fill=gray!40, line width=0.3mm] (\x,\y+3) rectangle (\x+0.75,\y+3.75);
      }
}
\end{scope}

}
}
\end{scope}

\fill[white] (-0.2,-13) rectangle (10,-23);

\begin{scope}[shift={(0, -19)}] 
\foreach \p in {0, 5} {
\foreach \q in {0, 5} {
\begin{scope}[shift={(\p, \q-4)}]
 \foreach \x in {0, 0.75} { 
     \foreach \y in {0, -0.75}{
      \draw [fill=gray!40, line width=0.3mm] (\x,\y+3) rectangle (\x+0.75,\y+3.75);
      }
}
\end{scope}

\fill[white] (-0.2,3) rectangle (1.8, 5) ;
}
}
\end{scope}

\draw [fill=gray!40, line width=0.3mm] (0,3-18) rectangle (0.75,3.75-18);

\draw [dashed, ultra thick] (19.5-20,-10.75-3) rectangle (19.5-6,-24.75-3);

\end{scope}
\end{tikzpicture}
\vskip .8cm

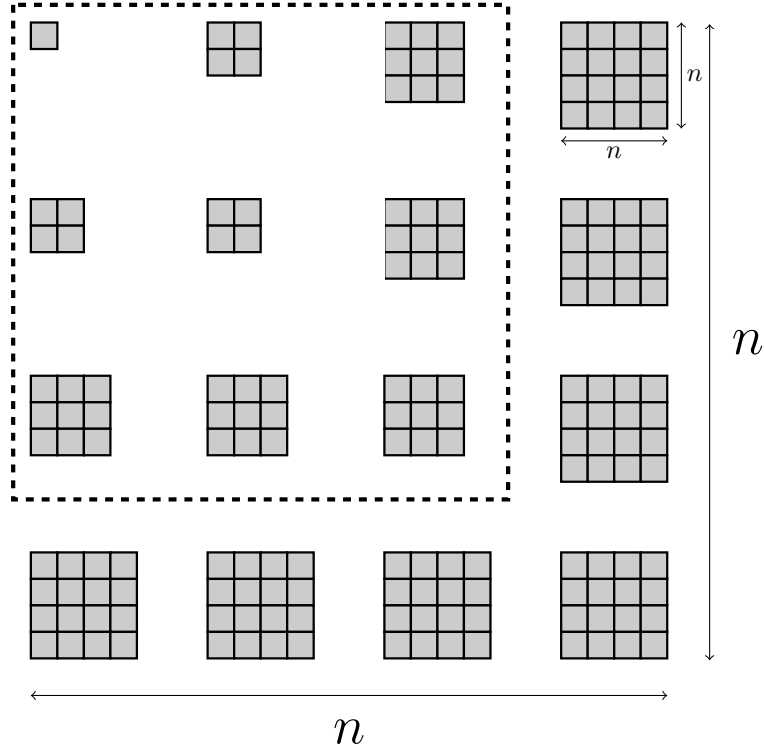
\captionof{figure}{Top layer revisited}
\label{Convolution 2}

\restoreparindent In this way, the duplicated final layer forms a square arrangement of side 
$n+1$, composed of squares of side 
$n$, from which the sum of squares 
$1^2+2^2+3^2+\cdots +n^2$
 has been subtracted twice, as can be seen in Figure \ref{Two copies}.

Now, since the leftover pieces of area 
$\mathcal{R}n(n+1)$ from each of the $n$ layers must be added to the top layer before duplicating it, and taking into account formula $(\ref{Squares})$ for the sum of squares, we obtain:

\begin{align*}
  &2\mathcal{R}n^2(n+1)+n^2(n+1)^2-2\Bigl(1^2+2^2+3^2+\cdots +n^2\Bigr)\\
  &= 2\mathcal{R}n^2(n+1)+n^2(n+1)^2-\frac{n(n+1)(2n+1)}{3} \\&=n(n+1)\Bigl[ 2\mathcal{R} n+n(n+1)-\frac{2n+1}{3}\Bigr]
\end{align*}

We now consider whether the final layer, after adding the leftover pieces and then duplicating it, is equal to each of the first $n$ layers, each of which has area
$$n^2(n+1)^2-\mathcal{R}n(n+1)=n(n+1)\Bigl[ n(n+1)-\mathcal{R}\Bigr].$$

This is the case if the following equality holds:

$$\frac{2n+1}{3}=\mathcal{R}(2n+1),$$
which is equivalent to 
$$\mathcal{R}=\frac{1}{3}.$$

Since $\mathcal{R}=x^2+x$, where $x^2+x=\frac{1}{3}$, it follows that the number of unit squares in each layer must be multiplied by $n+\frac{1}{2}$. Moreover, the above computation shows that the value $x$, which determined the width of the strip that had to be removed from each of the rectangles in the first $n$ layers, must be the positive solution of the equation $x^2+x-\frac{1}{3}=0$, in order to obtain exactly the factor $(n+\frac{1}{2})$.\par
\vspace{5mm}
\noindent
\begin{tikzpicture}[scale=0.44]
 \draw (0,0) -- (0,0);
\begin{scope}[shift={(1.5, 0)}] 
\draw [fill=pink!300, line width=0.3mm] (0,3) rectangle (0.75,3.75);

\foreach \p in {0, 5} {
\foreach \q in {0, 5} {
\begin{scope}[shift={(\p, \q-5)}]
 \foreach \x in {0.75} { 
     \foreach \y in {0, -0.75}{
      \draw [fill=pink!200, line width=0.3mm] (\x,\y+3) rectangle (\x+0.75,\y+3.75);
      }
}

\foreach \x in {0, 0.75} { 
     \foreach \y in {-0.75}{
      \draw [fill=pink!200, line width=0.3mm] (\x,\y+3) rectangle (\x+0.75,\y+3.75);
      }
}
\end{scope}

}
}

\foreach \p in {0, 5, 10} {
\foreach \q in {0, 5, 10} {
\begin{scope}[shift={(\p, \q-10)}]
 \foreach \x in {1.5} { 
     \foreach \y in {0, -0.75, -1.5}{
      \draw [fill=pink!120, line width=0.3mm] (\x,\y+3) rectangle (\x+0.75,\y+3.75);
      }
}

\foreach \x in {0, 0.75, 1.5} { 
     \foreach \y in {-1.5}{
      \draw [fill=pink!120, line width=0.3mm] (\x,\y+3) rectangle (\x+0.75,\y+3.75);
      }
}
\end{scope}

}
}

\foreach \p in {0, 5, 10, 15} {
\foreach \q in {0, 5, 10, 15} {
\begin{scope}[shift={(\p, \q-15)}]
 \foreach \x in { 2.25} { 
     \foreach \y in {0, -0.75, -1.5, -2.25}{
      \draw [fill=pink!40, line width=0.3mm] (\x,\y+3) rectangle (\x+0.75,\y+3.75);
      }
}

 \foreach \x in {0, 0.75, 1.5, 2.25} { 
     \foreach \y in {-2.25}{
      \draw [fill=pink!40, line width=0.3mm] (\x,\y+3) rectangle (\x+0.75,\y+3.75);
      }
}

\end{scope}

}
}

\foreach \p in {20} {
\foreach \q in {18, 23, 28} {
\begin{scope}[shift={(\p, \q-33)}]
 \foreach \x in {0, 0.75, 1.5, 2.25} { 
     \foreach \y in {0, -0.75, -1.5, -2.25}{
      \draw [fill=gray!40, line width=0.3mm] (\x,\y+3) rectangle (\x+0.75,\y+3.75);
      }
}
\end{scope}

}
}

\foreach \p in {5, 10, 15, 20} {
\foreach \q in {13} {
\begin{scope}[shift={(\p, \q-33)}]
 \foreach \x in {0, 0.75, 1.5, 2.25} { 
     \foreach \y in {0, -0.75, -1.5, -2.25}{
      \draw [fill=gray!40, line width=0.3mm] (\x,\y+3) rectangle (\x+0.75,\y+3.75);
      }
}
\end{scope}

}
}

\foreach \p in {15} {
\foreach \q in {23, 28} {
\begin{scope}[shift={(\p, \q-33)}]
 \foreach \x in {0, 0.75, 1.5} { 
     \foreach \y in {0, -0.75, -1.5}{
      \draw [fill=gray!40, line width=0.3mm] (\x,\y+3) rectangle (\x+0.75,\y+3.75);
      }
}
\end{scope}

}
}

\foreach \p in {5, 10, 15} {
\foreach \q in {18} {
\begin{scope}[shift={(\p, \q-33)}]
 \foreach \x in {0, 0.75, 1.5} { 
     \foreach \y in {0, -0.75, -1.5}{
      \draw [fill=gray!40, line width=0.3mm] (\x,\y+3) rectangle (\x+0.75,\y+3.75);
      }
}
\end{scope}

}
}

\foreach \p in {10} {
\foreach \q in {28} {
\begin{scope}[shift={(\p, \q-33)}]
 \foreach \x in {0, 0.75} { 
     \foreach \y in {0, -0.75}{
      \draw [fill=gray!40, line width=0.3mm] (\x,\y+3) rectangle (\x+0.75,\y+3.75);
      }
}
\end{scope}

}
}

\foreach \p in {5, 10} {
\foreach \q in {23} {
\begin{scope}[shift={(\p, \q-33)}]
 \foreach \x in {0, 0.75} { 
     \foreach \y in {0, -0.75}{
      \draw [fill=gray!40, line width=0.3mm] (\x,\y+3) rectangle (\x+0.75,\y+3.75);
      }
}
\end{scope}

}
}

\foreach \p in {5} {
\foreach \q in {28} {
\begin{scope}[shift={(\p, \q-33)}]
 \foreach \x in {0} { 
     \foreach \y in {0}{
      \draw [fill=gray!40, line width=0.3mm] (\x,\y+3) rectangle (\x+0.75,\y+3.75);
      }
}
\end{scope}

}
}

\node at (12,7) {\huge $n^2 (n+1)^2 -2(1^2+2^2+3^2+\cdots + n^2)$};


\begin{scope}[shift={(-2, 0)}] 
   
\draw[<->] (2,0.4) -- (5,0.4) node[midway, below, yshift=0.5mm] {$n$};
\draw[<->] (5.4,1-0.25) -- (5.4,4-0.25) node[midway, right, xshift=-0.5mm] {$n$};

\end{scope}

\begin{scope}[shift={(-15, 10)}] 

\draw[<->] (15,-30) -- (38.2,-30) node[midway, below, yshift=-2mm] {\LARGE $n+1$};

\draw[<->] (38.7,-29.3) -- (38.7,-6.3) node[midway, right, xshift=1.7mm] {\LARGE $n+1$};

\draw [dashed, ultra thick] (34.75,-6) rectangle (38.25,-9.5);

\draw [dashed, ultra thick] (15,-26) rectangle (18.5,-29.5);

\draw [dashed, ultra thick] (19.5,-10.75) rectangle (33.5,-24.75);

\end{scope}

\end{scope}
\end{tikzpicture}

\vskip .8cm

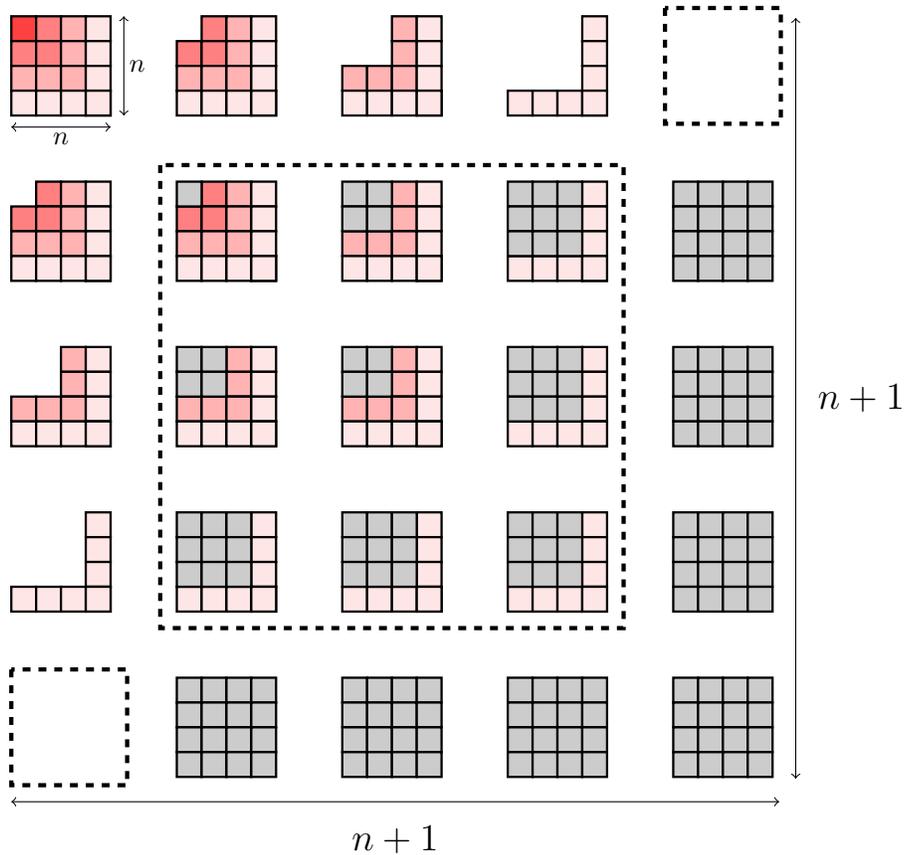
\captionof{figure}{Two copies of the top layer}
\label{Two copies}

\end{document}